\documentclass[a4paper,reqno]{amsart}
\usepackage{ 
	amsmath, 
	amssymb, 
	amsthm, 
	amscd, 
	amstext, 
	verbatim, 
	enumerate, 
	mathtools,
	color}

\usepackage{hyperref}
\usepackage{comment}
\usepackage[all]{xy}
\usepackage[utf8]{inputenc} %
\usepackage{lmodern}        %
\usepackage[T1]{fontenc}    %
\usepackage{hyperref}       %
\usepackage{url}            %
\usepackage{booktabs}       %
\usepackage{amsfonts}       %
\usepackage{nicefrac}       %
\usepackage{microtype}      %
\usepackage{xcolor}         %
\usepackage{mathtools}		%
\usepackage[disable]{todonotes}	%
\usepackage{aliascnt} 		%
\usepackage{enumitem}		%
\usepackage{graphicx}       %
\usepackage{caption}        %
\usepackage{subcaption}     %
\usepackage{float} 			%
\usepackage{forest}			%
\usepackage{mathrsfs} 		%
\setlist[enumerate,1]{label={(\roman*)}} %
\makeatletter
\newcommand{\myitem}[1]{%
	\item[#1]\protected@edef\@currentlabel{#1}%
}
\makeatother

\usepackage{tikz}
\usetikzlibrary{arrows.meta}
\usepackage{tikz-cd}

\hypersetup{
	colorlinks,
	linkcolor={blue!80!black},
	citecolor={green},
	urlcolor={blue!80!black}
}

\theoremstyle{plain}
\newtheorem{theorem}{Theorem}[section]

\newaliascnt{propCt}{theorem}
\newtheorem{prop}[propCt]{Proposition}
\aliascntresetthe{propCt}

\newaliascnt{lemmaCt}{theorem}
\newtheorem{lemma}[lemmaCt]{Lemma}
\aliascntresetthe{lemmaCt}

\newaliascnt{corCt}{theorem}
\newtheorem{cor}[corCt]{Corollary}
\aliascntresetthe{corCt}

\theoremstyle{definition}
\newaliascnt{defiCt}{theorem}
\newtheorem{defi}[defiCt]{Definition}
\aliascntresetthe{defiCt}

\newaliascnt{remCt}{theorem}
\newtheorem{rem}[remCt]{Remark}
\aliascntresetthe{remCt}

\newaliascnt{qstCt}{theorem}

\aliascntresetthe{qstCt}

\newaliascnt{pbmCt}{theorem}

\aliascntresetthe{pbmCt}

\newaliascnt{exaCt}{theorem}
\newtheorem{exa}[exaCt]{Example}
\aliascntresetthe{exaCt}

\theoremstyle{plain}
\newtheorem{alphthm}{Theorem}			%

\newaliascnt{alphqstCt}{alphthm}

\aliascntresetthe{alphqstCt}

\newaliascnt{alphcorCt}{alphthm}

\aliascntresetthe{alphcorCt}

\newaliascnt{alphpropCt}{alphthm}

\aliascntresetthe{alphpropCt}

\newaliascnt{alphproblemCt}{alphthm}
\newtheorem{alphproblem}[alphproblemCt]{Problem}
\aliascntresetthe{alphproblemCt}

\newtheorem*{claim}{Claim}

\numberwithin{equation}{section}

\newcommand{\N}{\mathbb N}
\newcommand{\Z}{\mathbb Z}
\newcommand{\C}{\mathbb C}

\newcommand{\pr}{\mathrm{pr}}

\DeclareMathOperator{\supp}{supp}
\DeclareMathOperator{\id}{id}

\begin{document}
\title[Groupoids, partial actions, inner amenability, Kirchberg algebras]{On groupoids beyond partial actions, inner amenability, and models for Kirchberg algebras}
\author[Alcides Buss]{Alcides Buss}
\address[Alcides Buss]{Departamento de Matem\'atica, Universidade Federal de Santa Catarina, 88.040-900
Florian\'opolis-SC, Brazil}
	\email{alcides.buss@ufsc.br}
	\urladdr{http://mtm.ufsc.br/~alcides/}

\author[Julian Kranz]{Julian Kranz}
\address[Julian Kranz]{Universit\"at M\"unster, Mathematisches Institut, Einsteinstr. 62, 48149 M\"unster, Germany}
\email{julian.kranz@uni-muenster.de}
\urladdr{https://sites.google.com/view/juliankranz/}

\keywords{Groupoids; partial actions; coarse geometry; higher rank graphs; inner amenability; $C^*$-exactness; Kirchberg algebras.}
\subjclass[2020]{Primary: 22A22; 43A07; 46L35. Secondary: 37B05; 46L06; 47L15}

\begin{abstract}

We construct the first explicit examples of locally compact Hausdorff
étale groupoids that are not inner amenable and that do not arise as
transformation groupoids associated to partial actions of discrete groups.
This answers questions of Anantharaman--Delaroche and Exel.
Our examples include all Higson--Lafforgue--Skandalis groupoids associated
to non-amenable residually finite groups, as well as their
principal variants constructed by Alekseev--Finn--Sell. These can be chosen
to be second countable, ample, and in the latter case even
principal.

We also show that large classes of Deaconu--Renault groupoids with connected unit space do not arise from partial actions of discrete groups, including cases whose
$C^*$-algebras are Kirchberg algebras in the UCT class.
We contrast this with the totally disconnected case by giving ample transformation groupoid models for all unital Kirchberg algebras in the UCT class as well as many higher rank graph algebras.
Finally, we characterize precisely when coarse groupoids arise from partial
actions of discrete groups in terms of coarse embeddings into groups.
\end{abstract}

\maketitle
\setcounter{tocdepth}{1}
\tableofcontents

\section{Introduction}

\'Etale groupoids have become one of the central organizing structures in
the modern theory of $C^*$-algebras. Since Renault's seminal work
\cite{Renault1980}, it has been understood that a large and flexible class of
$C^*$-algebras can be realized as $C^*$-algebras of locally compact Hausdorff
\'etale groupoids, and that the presence of a Cartan subalgebra corresponds
precisely to such a realization.
 
More precisely, Renault showed in \cite{Renault:Cartan} that every (separable) Cartan
pair $(A,B)$ arises from a twisted étale groupoid, in the sense that
\[
(A,B)\cong (C_r^*(G,\Sigma),C_0(G^{(0)}))
\]
for a (second countable) topologically principal twisted étale groupoid $(G,\Sigma)$.
Thus the problem of understanding $C^*$-algebras with Cartan subalgebras
can be approached through the study of étale groupoids and vice versa.

Recent advances in the Elliott classification program\footnote{We refer to \cite{Winter2018,White2023} for an overview of the Elliott classification program.} have further reinforced
the importance of this viewpoint. In particular, Barlak--Li \cite{Barlak-Li-I} (building on \cite{Tu1999}) proved that
every nuclear $C^*$-algebra with a Cartan subalgebra satisfies the Universal Coefficient Theorem (UCT) of \cite{Rosenberg1987}, while conversely Li \cite{Li:Classifiable-Cartan} (building on \cite{Spielberg2007,CFH-Kirchberg}) established that every `Elliott--classifiable' $C^*$-algebra has a Cartan subalgebra.  In particular, one of the main open problems in $C^*$-algebra theory -- whether every nuclear $C^*$-algebra satisfies the UCT -- can, in principle, be addressed by showing that every nuclear $C^*$-algebra admits a twisted étale groupoid model.

Presenting abstract $C^*$-algebras via \'etale groupoids has led to tremendous progress in understanding analytic approximation properties of $C^*$-algebras, most notably, nuclearity and exactness. Nuclearity of \'etale groupoid $C^*$-algebras can be described dynamically in terms of \emph{amenability} of the underlying groupoid \cite{AnantharamanDelaroche2000,Buss2026} -- an approximation property whose consequences in operator algebras and noncommutative geometry are hard to overestimate \cite{Connes1981,Higson2001,Tu1999,Gabe2024}. 
Similarly, exactness of group $C^*$-algebras was characterized by Ozawa \cite{Ozawa2000} in terms of \emph{amenability at infinity} or Yu's \emph{property A} \cite{Yu2000} of the underlying groupoid. Ozawa's seminal result has similarly striking consequences including the resolution of the Novikov conjecture for such groups \cite{Guentner2002}.
In a recent preprint, Anantharaman--Delaroche \cite{AnantharamanDelaroche} (see also \cite{Anantharaman2023} for an accessible overview) generalized Ozawa's theorem to second countable Hausdorff \'etale groupoids under the additional assumption of a technical condition called \emph{inner amenability}. 
Inner amenability has been established for many examples including all discrete groups, transformation groupoids associated to group actions, and more generally transformation groupoids associated to \emph{partial group actions} with clopen domains (see \cite[Corollary~5.18]{AnantharamanDelaroche}). 
Its validity in full generality was left as the main open problem in \cite{AnantharamanDelaroche}:
\begin{alphproblem}[Anantharaman--Delaroche]\label{prob-inneramenable}
	Is every locally compact Hausdorff \'etale groupoid inner amenable?
\end{alphproblem}

Transformation groupoids arising from partial actions of discrete groups on locally compact Hausdorff spaces form a particularly rigid and well-understood subclass of étale groupoids.
Given a partial action of a discrete group $\Gamma$
on a locally compact Hausdorff space $X$ by homeomorphisms between open subsets of $X$, the associated
transformation groupoid $\Gamma \ltimes X$
is Hausdorff étale and provides a crossed product description
\[
C^*_r(\Gamma\ltimes X) \cong C_0(X)\rtimes_r \Gamma,
\]
see \cite{Renault1980,Exel1994,McClanahan1995,Abadie2004}.
Large classes of $C^*$-algebras -- including all graph $C^*$-algebras --
admit such realizations via partial crossed products, see \cite{Li2017,Exel-Starling}.
This naturally leads to the following fundamental structural problem communicated to us by Exel \cite{Exelprivate}:

\begin{alphproblem}[Exel]\label{prob-transformation}
Are all locally compact Hausdorff \'etale groupoids isomorphic to transformation groupoids associated to partial actions of discrete groups?
\end{alphproblem}

While structural characterizations of transformation groupoids are known \cite{deCastroKang2024},
the existence of explicit examples of Hausdorff étale groupoids that fail
to admit such a realization appears not to have been documented in the
literature.

The purpose of this paper is to solve both problems by answering them negatively. 
The counterexamples to the first problem are groupoids introduced by Higson--Lafforgue--Skandalis \cite{Higson2002} and Alekseev--Finn-Sell \cite{Alekseev2018} as counterexamples to the Baum--Connes conjecture and examples of non-amenable groupoids with the weak containment property (see \cite{Willett2015}). 
We add to their list of exotic properties by showing that they also fail to be inner amenable:
\begin{alphthm}[{\autoref{thm:notinneramenable}}]\label{thmintro1}
	Let $\Gamma$ be a residually finite group with associated Higson--Lafforgue--Skandalis groupoid $G_{\mathrm{HLS}}$ and Alekseev--Finn-Sell groupoid $G_{\mathrm{AFS}}$. Then the following are equivalent. 
	\begin{enumerate}
		\item $\Gamma$ is amenable;
		\item $G_{\mathrm{HLS}}$ is inner amenable;
		\item $G_{\mathrm{AFS}}$ is inner amenable.
	\end{enumerate}
\end{alphthm}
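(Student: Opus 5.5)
The plan is to prove (i)$\Rightarrow$(ii), (i)$\Rightarrow$(iii), and then the converses by a restriction argument at the point at infinity. Recall the setup. Fix a decreasing sequence of finite index normal subgroups $\Gamma_n$ with trivial intersection and quotient maps $\pi_n\colon\Gamma\to\Gamma/\Gamma_n$. The HLS groupoid has unit space $\N\cup\{\infty\}$ and is the bundle of groups $\bigsqcup_n \Gamma/\Gamma_n \sqcup \Gamma$, topologized so that $\gamma$ is approximated by $\pi_n(\gamma)$. The AFS groupoid is its principal variant. It has unit space $\bigsqcup_n \Gamma/\Gamma_n\sqcup \partial$, with the box space pair groupoids at finite levels and a transformation groupoid $\Gamma\ltimes\partial$ at infinity, where $\partial=\varprojlim\Gamma/\Gamma_n$ is the profinite completion.

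I will work with the definition of inner amenability from Anantharaman--Delaroche: there is a net of nonnegative continuous compactly supported functions $f_i$ on $G$ with $\sum_{g\in G^x}$ or $G_x$ control. Concretely, $\|f_i\|_{2}$ restricted to fibres tends to $1$ uniformly on compacts of $G^{(0)}$, and $\sup_x \sum_{h}|f_i(g h g^{-1})-f_i(h)|^2\to 0$, i.e.\ approximate invariance under the conjugation action of $G$ on its isotropy or on $G\times_{G^{(0)}}G$.

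\textbf{Amenable case.} If $\Gamma$ is amenable, then $G_{\mathrm{HLS}}$ and $G_{\mathrm{AFS}}$ are amenable groupoids; this is standard, since they are built from amenable $\Gamma$ and finite groups. Amenable groupoids are inner amenable, because an approximately invariant mean for the left action gives one for conjugation via the product construction (Anantharaman--Delaroche). So (i)$\Rightarrow$(ii),(iii) should be short, citing her result.

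\textbf{Converse.} Inner amenability should pass to closed invariant subsets of the unit space, by restricting the functions $f_i$. At $\infty$ the restriction of $G_{\mathrm{HLS}}$ is the group $\Gamma$ itself, which is always inner amenable, so restriction alone gives nothing. The key is uniformity over the finite levels. Inner amenability of $G_{\mathrm{HLS}}$ gives functions $f$ on $G$ whose restrictions $f_n$ to the finite groups $\Gamma/\Gamma_n$ are approximately conjugation invariant and normalized, and these converge to $f_\infty$ on $\Gamma$ by continuity and compact support. I would try to exploit the fact that in the finite group $\Gamma/\Gamma_n$ the uniform function $\mathbf 1/|\Gamma/\Gamma_n|$ is in play. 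Better: combine conjugation invariance with the diagonal structure. Inner amenability in her sense is amenability of the action of $G$ on $G\times_{G^{(0)}}G$ by $g\cdot(h,k)=(gh,kg^{-1})$, i.e.\ of the groupoid $G\ltimes G$ with left-right action. Its restriction to $\infty$ is $\Gamma\times\Gamma$ acting on $\Gamma$, which is fine. At level $n$, however, compact support plus continuity forces $f$ to be supported near $\infty$ on finitely many elements of $\Gamma$ lifted. This yields, for large $n$, finitely supported functions $\xi$ on $\Gamma$ with $\xi(\gamma x\gamma^{-1})$-type invariance that also hold modulo $\Gamma_n$, uniformly.

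\textbf{Main obstacle.} The crux is showing that uniform approximate invariance across the levels $n$ forces a Reiter-type condition for $\Gamma$ acting by left translation, not merely by conjugation. My plan is to use the left-right formulation: consider pairs $(h,k)$ in a single finite group $Q_n=\Gamma/\Gamma_n$ and the function $F_n(h,k)$. Averaging over $k$ against the right action gives a finitely supported near-invariant probability on $Q_n$ for left translation by the images of a finite set $S\subset\Gamma$. For large $n$ this has support injecting into $\Gamma$, since $\bigcap\Gamma_n=1$ and the support is uniformly bounded by compactness. It thus lifts to a Følner/Reiter function on $\Gamma$, proving amenability. I expect the hardest point to be checking that compact support in $G$ really gives a uniform finite support at all large levels, and that the normalization does not degenerate. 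For $G_{\mathrm{AFS}}$ the same argument applies, with the pair groupoid of $Q_n$ replacing $Q_n$ and $\Gamma\ltimes\partial$ at infinity. Here one uses the coarse structure: pairs $(x,y)$ with $x^{-1}y$ in a finite set.
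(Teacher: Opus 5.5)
\paragraph{Verdict.} The direction (i)$\Rightarrow$(ii),(iii) is fine. For amenable $\Gamma$ the HLS groupoid is amenable (Willett, Lemma 2.3), the AFS groupoid is a transformation groupoid of it and hence also amenable, and amenable groupoids are inner amenable. The converse has a genuine gap, described in the next two paragraphs.

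\paragraph{The definition you use is wrong.} Both versions of inner amenability you write down are satisfied by \emph{every} HLS groupoid. First, $f=1_{G^{(0)}}$ is continuous and compactly supported, since the unit space $\N\cup\{\infty\}$ is compact and clopen. It has $\ell^2$-norm $1$ on each fibre and is exactly conjugation invariant. Second, the action $g\cdot(h,k)=(gh,kg^{-1})$ is proper, because $(h,k)\mapsto h$ intertwines it with left multiplication of $G$ on itself; so its transformation groupoid is proper, hence amenable. Under either definition the theorem would be false for $\Gamma=\mathbb F_2$. Anantharaman-Delaroche's notion instead asks for \emph{properly supported positive type functions $\varphi$ on the product groupoid $G\times G$}, with $\varphi\le 1$ on units and $\varphi(g,g)\to 1$ uniformly on compacta. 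Its content comes from the fact that $\Delta_G$ is not open in $G\times G$ (points over $\infty$ are not isolated), so $1_{\Delta_G}$ is not admissible.

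\paragraph{The ``main obstacle'' step does not work.} Even with the correct definition, restricting to pairs $(h,k)\in Q_n\times Q_n$ at a single level carries no usable information. The set $\{n\}\times Q_n$ is compact, so proper support imposes nothing there. In fact $\Delta_G$ is open in the same-level part $\{(h,k)\colon r(h)=r(k)\}$, so $1_{\Delta}$ is a legitimate restriction there. Nothing forces your average over $k$ to have support of bounded size as $n\to\infty$.

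\paragraph{The missing idea: pair level $n$ with level $\infty$.} The map $\gamma\mapsto((n,\pi_n(\gamma)),(\infty,\gamma))$ is a homomorphism $\Gamma\to G\times G$. Hence $\psi_n(\gamma)\coloneqq\varphi((n,\pi_n(\gamma)),(\infty,\gamma))$ is of positive type on $\Gamma$, with $\psi_n(1)\le 1$.
\begin{itemize}
\item Proper support, applied to the compact set $\{n\}\times Q_n$, makes $\psi_n$ finitely supported, since $\{\infty\}\times\Gamma$ is closed and discrete in $G$.
\item Continuity of $\varphi$ at the points $((\infty,\gamma),(\infty,\gamma))$, together with $(n,\pi_n(\gamma))\to(\infty,\gamma)$, gives $|\psi_n-1|<\varepsilon$ on a prescribed finite set once $n$ is large.
\end{itemize}
Finitely supported positive definite functions of this kind give amenability of $\Gamma$ directly; no Reiter or F{\o}lner functions are needed.

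\paragraph{The AFS case needs one more step.} Use the analogous map $(\gamma,x)\mapsto(\widehat\pi_n(\gamma,x),(\gamma,x))$ on $\Gamma\ltimes\partial$, with a compactness argument to choose $n$ uniformly in $x\in\partial$. This only yields amenability of $\Gamma\ltimes\partial$, not of $\Gamma$. To conclude that $\Gamma$ is amenable you still need the $\Gamma$-invariant Haar probability measure on $\partial$, a step your sketch does not address.
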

By taking $\Gamma$ to be a non-amenable residually finite group such as the free group $\mathbb F_2$, this yields the first explicit example of a second countable Hausdorff \'etale groupoid which fails to be inner amenable in the sense of \cite{AnantharamanDelaroche}. 

Although \autoref{prob-inneramenable} is intimately related to \autoref{prob-transformation}, our counterexamples to the former are not automatically counterexamples to the latter, since the lack of inner amenability of a given \'etale groupoid only obstructs its realization as a partial transformation groupoid with \emph{clopen} domains. 
Nonetheless, we prove that the examples in \autoref{thmintro1} are, among many more, counterexamples to \autoref{prob-transformation}:

\begin{alphthm}
	The following groupoids cannot be realized as transformation groupoids associated to partial actions of discrete groups. 
	\begin{enumerate}
		\item Higson--Lafforgue--Skandalis and Alekseev--Finn-Sell groupoids associated to residually finite groups which are not locally finite (\autoref{cor-HLS});
		\item Deaconu--Renault groupoids associated to nontrivial self-coverings of connected spaces (\autoref{prop:no_pure_cocycle_connected});
		\item The coarse groupoid associated to the maximal uniformly locally finite structure on an infinite set (\autoref{exa-maximalcoarse}).
	\end{enumerate}
\end{alphthm}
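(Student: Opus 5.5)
The three cases share one mechanism. Suppose an étale groupoid $G$ is isomorphic to $\Gamma\ltimes X$ for a partial action of a discrete group $\Gamma$. Then $G$ carries a continuous groupoid homomorphism (cocycle) $c\colon G\to\Gamma$. This cocycle has two further properties. It is \emph{pure}: the map $(r,c,s)\colon G\to X\times\Gamma\times X$ is injective, so $c$ has trivial kernel on isotropy and even on arrows. It is also \emph{open/étale compatible}: for each $\gamma$, the set $c^{-1}(\gamma)$ is an open bisection whose source and range maps are homeomorphisms onto open sets. Conversely, such a cocycle recovers the partial action, in the spirit of the characterization of \cite{deCastroKang2024}. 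So in each case I will show that no continuous cocycle $c\colon G\to\Gamma$, into any discrete group, can be injective on each fibre $\{g: s(g)=x, r(g)=y\}$ while every $c^{-1}(\gamma)$ is a bisection. I first isolate this as a lemma: $G\cong \Gamma\ltimes X$ for some partial action iff $G$ admits a cocycle $c$ such that each $c^{-1}(\gamma)$ is a bisection.

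\textbf{Case (ii), Deaconu--Renault groupoids.} Let $G=\{(x,m-n,y):\sigma^m x=\sigma^n y\}$ for a nontrivial covering $\sigma$ of a connected space $X$. The unit space is connected and $c$ is continuous into a discrete group, so $c$ is constant on each connected open bisection. Take the bisection $U=\{(x,1,\sigma x)\}$, which is homeomorphic to $X$ and hence connected. Then $c\equiv \gamma$ on $U$, and the inverse bisection has $c\equiv\gamma^{-1}$. The composite $U^{-1}U=\{(\sigma x,0,\sigma x')...\}$ contains the arrows $(x,0,x')$ with $\sigma x=\sigma x'$ and $x\ne x'$, which exist because the covering is nontrivial. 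Concretely, $(x,0,x') = (x,1,\sigma x)(\sigma x',-1,x')$, so $c(x,0,x')=\gamma\gamma^{-1}=e$. But the unit $(x,0,x)$ also has $c=e$, and these two arrows share the range $x$. This contradicts the requirement that $c^{-1}(e)$ be a bisection. The key point is that a covering map of connected spaces with more than one sheet gives distinct $x,x'$ in a common fibre, so the argument goes through.

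\textbf{Case (i), HLS/AFS groupoids.} Here $G=\bigsqcup_n \Gamma/N_n\sqcup \Gamma$ over $\N\cup\{\infty\}$, with a topology in which $\gamma_n\to\gamma$ exactly when the images eventually agree. Restricted to the finite group $\Gamma/N_n$, a cocycle is a homomorphism $c_n\colon \Gamma/N_n\to\Gamma'$, injective by the bisection condition. Continuity at $\infty$ forces $c_n(\gamma N_n)=c_\infty(\gamma)$ for large $n$ (depending on $\gamma$). Pick $\gamma\in\Gamma$ of infinite order, or, if every element has finite order, a finitely generated subgroup that is infinite; one of these exists because $\Gamma$ is not locally finite. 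Call the finitely generated subgroup $H$. For $n$ large, $c_n$ agrees with $c_\infty$ on the generators of $H$, and therefore on the image of $H$ in $\Gamma/N_n$. Since $c_n$ is injective, $c_\infty|_H$ factors injectively through the finite quotient $H/(H\cap N_n)$. Letting $n\to\infty$, these quotients separate points because the $N_n$ form a residual chain, so $c_\infty|_H$ is injective. But its image equals the image of the finite group $c_n(H N_n/N_n)$, hence is finite. This contradicts the infinitude of $H$. The AFS version reduces to the same argument by restricting to the copies of $\Gamma/N_n$ inside the orbits.

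\textbf{Case (iii), the maximal coarse groupoid.} For the coarse groupoid of the maximal uniformly locally finite structure on $X$, restricting a cocycle to $X\times X$ gives a map $f\colon X\to\Gamma$ with $c(x,y)=f(x)f(y)^{-1}$, by cocycle cohomology on a pair groupoid. The bisection condition says $f$ is injective on controlled sets and coarse. Every partial bijection of $X$ with finite displacement is controlled here. I will build a controlled set, for example a union of growing finite cliques of bounded degree, that forces a contradiction with the bisection condition via the Stone–\v{C}ech boundary: $c$ extends continuously to $\beta X$, and uniform finiteness cannot hold on all "expander-like" entourages. I expect this case to be the main obstacle. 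I would try to deduce it from a general characterization (coarse groupoid $=$ partial transformation groupoid iff $X$ coarsely embeds into a group, consistent with the abstract) together with the fact that the maximal structure (the disjoint union of all finite graphs of bounded degree) contains expanders with arbitrary girth. Such expanders do not coarsely embed into any countable group equipped with a proper left-invariant metric.
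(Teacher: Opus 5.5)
Your case (ii) is correct and is essentially the paper's argument. You compute $c(x,0,x')=\gamma\gamma^{-1}=1$ for $x\neq x'$ with $\sigma x=\sigma x'$; the paper phrases the same thing as non-injectivity of $s$ on the bisection $c^{-1}(\gamma)\supseteq\{(x,1,\sigma x)\}$.

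For the HLS groupoid your isotropy argument is also correct, and it is more direct than the paper's route through the failure of property $\Delta$. Once $c_\infty=c_n\circ\pi_n$ on $H=\langle S\rangle$ for a single large $n$, finiteness of $\Gamma_n$ and injectivity of $c_\infty$ force $H$ to be finite. Injectivity of $c_\infty$ is immediate from purity at the unit $\infty$, so your residual-chain step is unnecessary.

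The AFS case, however, does not ``reduce to the same argument''. $G_{\mathrm{AFS}}$ is principal, so there is no isotropy and no homomorphism $\Gamma_n\to\Gamma'$. Restricted to a finite-level orbit, $c$ is merely a coboundary on a pair groupoid. For the same reason, injectivity of $(r,c,s)$ is automatic there and carries no information; you must use that each $c^{-1}(\gamma')$ is a bisection.

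What you actually have to control is $\gamma\mapsto c(\infty,\gamma,y_0)$, which is injective on $\Gamma$ by purity. By the cocycle identity, this involves the values $c(\infty,s,z)$ at the infinitely many points $z\in Hy_0$, so continuity at individual arrows gives no common level $n$. The missing step is a uniformity argument:
\begin{itemize}
\item Local constancy of $c$ plus compactness of $\widehat X_\infty$ yields one $m$ with $c(k,\pi_k(s),y)=c(\infty,s,z)$ for all $s\in S$ and $k\geq m$, whenever $y\in\widehat X_k$ and $z\in\widehat X_\infty$ have the same image in $\widehat\Gamma_m$.
\item Translations commute with these projections, so $c(\infty,\gamma,y_0)=c(n,\pi_n(\gamma),\widehat\pi_{n,\infty}(y_0))$ for all $\gamma\in H$ and $n\geq m$.
\item The right-hand side takes at most $|\Gamma_n|$ values, contradicting injectivity on the infinite group $H$.
\end{itemize}
This is exactly the equicontinuity of \autoref{prop:AFScontinuous} that drives the paper's proof.

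Case (iii) has a genuine gap, because the step you plan to rely on is false. There are finitely generated groups whose Cayley graphs contain isometrically embedded expander families of large girth (Osajda's graphical small cancellation groups). So non-embeddability of expanders cannot be the obstruction. Besides, the characterization you invoke requires excluding coarse \emph{injections}, not coarse embeddings; embeddings correspond only to clopen domains. Also, ``growing cliques of bounded degree'' is self-contradictory.

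The obstruction comes from maximality of $\mathcal C^{\max}_X$ alone, via an elementary diagonal argument:
\begin{itemize}
\item Put $f(x)=c(x,x_0)$. Every finite set is an entourage, so this is defined, and it is injective by purity.
\item Choose distinct $x_1,x_2,\dotsc\in X$. Inductively choose $n_{k+1}>n_k$ such that $f(x_{k+1})f(x_{n_{k+1}})^{-1}$ avoids the finitely many values already obtained. This is possible since $n\mapsto f(x_{k+1})f(x_n)^{-1}$ is injective.
\item Then $E=\{(x_k,x_{n_k})\mid k\geq1\}$ meets every point in at most two pairs. Hence $E\in\mathcal C^{\max}_X$ and $\overline E\subset G(X)$ is compact.
\item Yet $c(x_k,x_{n_k})=f(x_k)f(x_{n_k})^{-1}$ takes infinitely many values, contradicting continuity of $c$ into a discrete group.
\end{itemize}
This is the paper's argument, which shows that no injective map $X\to\Gamma$ is coarse.
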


For coarse groupoids, we in fact completely characterize when they arise as transformation groupoids in terms of coarse injections and embeddings into groups; this point of view was suggested to us in discussions with Exel \cite{Exelprivate}:
\begin{alphthm}[\autoref{thm:coarse}]
	Let $G(X)$ be the coarse groupoid associated to a uniformly locally finite coarse space. The following are equivalent:
	\begin{enumerate}
		\item $G(X)$ is isomorphic to a transformation groupoid by a partial group action;
		\item $X$ admits an injective coarse map into a discrete group.
	\end{enumerate}
	Moreover, the partial group action can be chosen to have clopen domains if and only if $X$ coarsely \emph{embeds} into a group. 
\end{alphthm}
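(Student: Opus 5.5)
The proof rests on the standard characterization of partial transformation groupoids: an étale groupoid $G$ is isomorphic to $\Gamma\ltimes X$ for a partial action of a discrete group $\Gamma$ if and only if there is a continuous homomorphism (cocycle) $c\colon G\to\Gamma$ that is injective on each fibre of the map $g\mapsto (c(g),s(g))$. Call such a cocycle \emph{pure}: $c(g)=c(h)$ and $s(g)=s(h)$ imply $g=h$. Given a pure cocycle, $\Gamma$ acts partially on $G^{(0)}$ via $\gamma\cdot s(g)=r(g)$ whenever $c(g)=\gamma$. The domains of this action are the open sets $s(c^{-1}(\gamma))$. If moreover the map $(c,s)\colon G\to\Gamma\times G^{(0)}$ is a closed map, the domains are clopen. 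I will use this characterization (presumably established earlier in the paper via \cite{deCastroKang2024}) and specialize to the coarse groupoid $G(X)=\bigcup_E \overline{E}\subseteq\beta(X\times X)$, with unit space $\beta X$.

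For (ii)$\Rightarrow$(i), take an injective coarse map $f\colon X\to\Gamma$ and define $c_0(x,y)=f(x)f(y)^{-1}$ on the pair groupoid $X\times X$. Since $f$ is coarse, each entourage $E$ is sent by $c_0$ to a finite subset of $\Gamma$. Hence $c_0|_E$ extends continuously to $\overline{E}\cong\beta E$, and these extensions glue to a continuous cocycle $c\colon G(X)\to\Gamma$.

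Purity of $c$ is the main obstacle, because it must be checked at points of the boundary. On $X\times X$ itself purity is exactly injectivity of $f$: if $f(x)f(y)^{-1}=f(x')f(y)^{-1}$ then $f(x)=f(x')$, so $x=x'$. For general points, I would partition each entourage $E$ into finitely many graphs of partial injections $t_i\colon D_i\to X$. This is possible by uniform local finiteness, which bounds the vertex degrees of $E$, so the edges can be coloured by finitely many colours. Refining by the finitely many values of $c_0$ on $E$, I can assume $c_0$ is constant on each piece. Then two pieces carrying the same $\gamma$ and sharing a source $y$ must coincide at $y$, since $f(t_i(y))=\gamma f(y)=f(t_j(y))$ forces $t_i(y)=t_j(y)$ by injectivity. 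Therefore the set $\{(x,y)\in E: c_0(x,y)=\gamma\}$ is itself the graph of a partial injection. Its closure in $\beta(X\times X)$ is homeomorphic, via $s$, to its closure in $\beta X$. This gives injectivity of $s$ on $c^{-1}(\gamma)$ inside $\overline{E}$, and since entourages are directed, the same holds on all of $G(X)$.

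For (i)$\Rightarrow$(ii), start from a pure cocycle $c$ and restrict it to $X\times X\subseteq G(X)$, obtaining a cocycle $c_0$. Cocycles on a pair groupoid are coboundaries, so $c_0(x,y)=f(x)f(y)^{-1}$ with $f(x):=c(x,x_0)$. This uses connectivity of the coarse space; otherwise I work componentwise and shift the components into distinct cosets by multiplying with distinct far-apart group elements. Continuity together with compactness of $\overline{E}$ makes $c(\overline E)$ finite, so $f$ is coarse. Purity restricted to $X\times X$ says that $f(x)=f(x')$ implies $(x,y)=(x',y)$, so $f$ is injective.

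For the clopen refinement, coarse embeddability means that $f$ is in addition effectively proper: $\{(x,y): f(x)f(y)^{-1}\in F\}$ is an entourage for every finite $F\subseteq\Gamma$. This is equivalent to $c^{-1}(\gamma)$ being contained in a single $\overline{E}$, hence compact. That in turn is equivalent to $s(c^{-1}(\gamma))$ being compact, hence clopen in $\beta X$. An injective coarse map that is not an embedding has some $c^{-1}(\gamma)$ which is not relatively compact, and then its domain is not closed. A non-injective coarse embedding can be made injective by replacing $\Gamma$ with $\Gamma\times F$ for a finite group $F$, using uniformly bounded fibres; for the ``if'' direction I will include this step.
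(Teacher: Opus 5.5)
Your proposal is correct and is essentially the paper's argument. You use the same $f=c(\cdot,x_0)$ for (i)$\Rightarrow$(ii) and the same cocycle for (ii)$\Rightarrow$(i): your continuous extension of $f(x)f(y)^{-1}$ is exactly the paper's $\tilde c\circ\beta(f\times f)$. For the clopen refinement you use the same link between compactness of $c^{-1}(\gamma)$ and $\{(x,y)\mid f(x)f(y)^{-1}=\gamma\}$ being an entourage.

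The differences are only in verification. You check purity at the boundary directly: level sets of $c_0$ inside an entourage are graphs of partial bijections, whose closures are bisections. The paper instead argues that $\beta(f\times f)\colon G(X)\to G(\Gamma)(\beta Y)\cong\Gamma\ltimes\beta Y$ maps only units to units. Likewise, you obtain clopen domains from $s\colon c^{-1}(\gamma)\to D_{\gamma^{-1}}$ being a homeomorphism. The paper instead identifies $G(X)$ with the restriction of $\Gamma\curvearrowright\beta\Gamma$ to the clopen set $\beta Y$.

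One caveat concerns your aside on non-connected $X$. Take $X=\N$ with only the subsets of $\Delta_\N$ as entourages. Then $G(X)=\beta\N$ has a pure cocycle into the trivial group, so your componentwise fix must pass to a larger group (e.g.\ $\Gamma\times\bigoplus_J\Z$, with $J$ the set of components). The same example shows that the ``moreover'' part actually fails without connectedness. Indeed, $\beta\N$ is a transformation groupoid with clopen domain, while $X$ coarsely embeds into no group, because coarse embeddings force connectedness. This is why the paper's Theorem assumes $X$ connected, and your argument for the ``moreover'' part implicitly uses this as well.
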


While we do present an example of a coarse space that does not embed into a group, we do not know whether every uniformly locally finite \emph{metric} space coarsely embeds (or even injects) into a group. 

We complement our negative results with two positive results providing transformation groupoid models for a large class of well-studied $C^*$-algebras. 
Our first positive result is an adaptation of a theorem by Wu \cite{Wu2025} which presents every \emph{stable} UCT Kirchberg algebra as a crossed product by a (global) action of a discrete group $\Gamma$ on a locally compact space $X$.

\begin{alphthm}[\autoref{thm-kirchberg}]\label{introthm-kirchberg}
	Every \emph{unital} Kirchberg algebra in the UCT-class can be realized as the crossed product $C(Y)\rtimes_r \Gamma$ associated to a partial action of a countable discrete group $\Gamma$ on the Cantor set $Y$. 
\end{alphthm}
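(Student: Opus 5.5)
The plan is to start from Wu's theorem \cite{Wu2025}. Given a unital UCT Kirchberg algebra $A$, its stabilization $A\otimes\mathbb K$ is a stable UCT Kirchberg algebra, so Wu gives a countable discrete group $\Gamma$ acting on a locally compact Hausdorff space $X$ with $A\otimes\mathbb K\cong C_0(X)\rtimes_r\Gamma$. We will also need $X$ to be totally disconnected, second countable and without isolated points, so that $X$ is a locally compact Cantor space; I expect Wu's construction, which comes from ample groupoid models, to provide this, and otherwise I would adjust the model in a first step. The transformation groupoid $\Gamma\ltimes X$ is then ample. Since the full projection $1_A\otimes e_{11}$ is a projection in $C_0(X)\rtimes_r\Gamma$, the natural move is to cut down by a compact open subset $Y\subseteq X$ that is full. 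Restricting a global action to $Y$ gives a partial action of $\Gamma$ on $Y$, with $\theta_g\colon Y\cap g^{-1}Y\to Y\cap gY$. Its transformation groupoid is the reduction $(\Gamma\ltimes X)|_Y$, and
\[
1_Y\bigl(C_0(X)\rtimes_r\Gamma\bigr)1_Y\cong C(Y)\rtimes_r\Gamma .
\]
Here $Y$ is compact, totally disconnected, metrizable and perfect, so it is the Cantor set.

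The key steps, in order, are as follows.
\begin{enumerate}
\item Fix Wu's model and check that it is ample, minimal and topologically free, with $X$ perfect.
\item Show that every nonempty compact open $Y$ is full, which follows from minimality, so that $1_Y(C_0(X)\rtimes_r\Gamma)1_Y$ is a unital UCT Kirchberg algebra. It is Kirchberg because it is a full corner of one.
\item Choose $Y$ with
\[
[1_Y]_0=[1_A\otimes e_{11}]_0\in K_0(A\otimes\mathbb K)\cong K_0(A).
\]
By Kirchberg--Phillips classification, unital UCT Kirchberg algebras are determined by $(K_0,[1],K_1)$, so the corner is then isomorphic to $A$.
\end{enumerate}

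The main obstacle is step (iii): realizing the prescribed class $[1_A]$ by the indicator of a compact open set. Pure infiniteness helps here. In a simple purely infinite algebra, every nonzero $K_0$-class is represented by a projection, and any two full projections with the same class are Murray--von Neumann equivalent. However, we need a projection of the specific form $1_Y$ in $C_0(X)$, not an arbitrary projection.

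I would first try to show that the classes $[1_U]$ for compact open $U\subseteq X$ exhaust $K_0$. The natural route is that for an ample groupoid the map $H_0(\Gamma\ltimes X)\to K_0$ is surjective, which I would try to get from Wu's construction or from an HK-type argument in low dimension. Granting surjectivity, pick compact open $U$ with $[1_U]=[1_A]$, and take $Y=U$ if $U\neq\emptyset$.

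If $[1_A]=0$, I would instead pick compact open $V$ with $[1_V]=0$, or use pure infiniteness: take disjoint $U_1,U_2\subseteq U$ and find $W$ whose class is the negative of a suitable class, so that $U\sqcup W$ has the desired class. This needs enough room in $X$, which is noncompact and infinite. If surjectivity fails, I would fall back to modifying the action, for instance by taking a product with a minimal Cantor $\mathbb Z$-odometer-type system, which changes $K$-theory controllably, to enlarge the set of realized classes.
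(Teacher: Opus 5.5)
Your overall architecture matches the paper's. You take Wu's boundary action $\Gamma\curvearrowright\partial X_{\mathcal G_+,x}$ modelling $A\otimes\mathbb K$, cut down to a nonempty compact open $Y$, use minimality to see that $1_Y$ is full, identify the corner with the partial crossed product, and conclude by Kirchberg--Phillips. Steps (i), (ii) and the classification step are fine.

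The gap is in step (iii), which is the real content of the theorem. Even if $H_0(\Gamma\ltimes X)\to K_0$ is surjective, that only says $K_0$ is \emph{generated as a group} by the classes $[1_U]$. A general element has the form $\sum_i n_i[1_{U_i}]$, with $n_i\in\Z$ of either sign and the $U_i$ possibly overlapping. Your step ``granting surjectivity, pick compact open $U$ with $[1_U]=[1_A]$'' silently assumes that the set $S=\{[1_O]\mid O\subset X\text{ compact open}\}$ is already a subgroup. Proving this needs a \emph{dynamical} comparison property: you must be able to move compact open sets into disjoint position by bisections, and to fit two disjoint translated copies of $O$ inside $O$. Pure infiniteness of the crossed product does not directly give this, because the partial isometries it provides need not be normalizers coming from bisections. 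Noncompactness of $X$ gives no mechanism for it either. Your treatment of the case $[1_A]=0$ gestures in this direction but supplies no mechanism.

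The idea you are missing is paradoxical comparison (\autoref{lem-paradoxical}). Suppose every compact open $O$ admits compact open bisections $U_1,U_2$ with $s(U_i)=O$ and $r(U_1),r(U_2)\subset O$ disjoint. Then $[1_{O_1}]+[1_{O_2}]=[1_{r(U_1O_1)\cup r(U_2O_2)}]$, with the $U_i$ chosen for $O_1\cup O_2$. Also $-[1_O]=[1_{O\setminus(r(U_1)\cup r(U_2))}]$. Hence $S$ is a subgroup.

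The paper verifies this property for Wu's model directly. Every compact open set is a finite disjoint union of cylinders $Z_\partial(v)$ (\autoref{lem-cylinder}). Two independent loops at each vertex of the quotient graph $\Gamma\backslash X_{\mathcal G_+,x}$ lift to $\gamma_1,\gamma_2\in\Gamma$ with $\gamma_1Z_\partial(v)$ and $\gamma_2Z_\partial(v)$ disjoint inside $Z_\partial(v)$.

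Generation of $K_0$ by the classes $[1_{Z_\partial(v)}]$ comes from the explicit $K$-theory computations of Wu and Brownlowe--Spielberg--Thomas--Wu. It does not come from an HK-type argument, which is not available in this generality. Paradoxical comparison also gives you that $Y$ has no isolated points, and that $Y$ can be chosen nonempty when $[1_A]=0$.

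Your fallback of taking a product with an odometer would not help. The crossed product becomes a tensor product with a Bunce--Deddens algebra, whose $K$-theory differs from that of $A$ by the K\"unneth formula, so you would no longer be modelling $A$.
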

The proof of \autoref{introthm-kirchberg} is achieved by considering Wu's global actions $\Gamma\curvearrowright X$ and realizing every class in $K_0(C_0(X)\rtimes_r \Gamma)$ as the characteristic function $[1_Y]$ of a compact open subset $Y\subset X$. 
This strategy is similar to how \cite{CFH-Kirchberg} extract ample groupoid models for unital UCT Kirchberg algebras from the stable case treated in \cite{Spielberg2007}.
In order to do so, we identify a set of generators of $K_0(C_0(X)\rtimes_r \Gamma)$ from \cite{Brownlowe2025} and establish a dynamical comparison property in the sense of \cite{Kerr2020,Ma2022} allowing us to represent sums and differences of the generators by compact open subsets.

Our second positive result generalizes a result of Li \cite{Li2017} on graph groupoids and shows that path groupoids associated to many higher-rank graphs in the sense of \cite{Kumjian2000} do arise as transformation groupoids.

\begin{alphthm}[\autoref{prop-k-graph}]
Let $\Lambda$ be a higher-rank graph which embeds into its fundamental groupoid in the sense of \cite{Pask2004}. Then the associated path groupoid $G_\Lambda$ is isomorphic to a transformation groupoid arising from a partial action of a discrete group.
\end{alphthm}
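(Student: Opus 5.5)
The plan is to imitate Li's argument for graph groupoids: realize $G_\Lambda$ as the restriction of a transformation groupoid built from the action of the fundamental groupoid (or a group associated to it) on the boundary path space. Let $\mathcal G(\Lambda)$ denote the fundamental groupoid of $\Lambda$. Since $\Lambda$ is connected in each component, fix a maximal spanning tree in each connected component. Then $\mathcal G(\Lambda)$ is equivalent to a disjoint union of fundamental groups, and we can pass to a single discrete group $\Gamma$ by taking the free product of the vertex groups of the components. By hypothesis, the canonical functor $\iota\colon\Lambda\to\mathcal G(\Lambda)$ is injective. Composing with the tree-trivialisation, we obtain a map $c\colon\Lambda\to\Gamma$ that is a functor ($c(\mu\nu)=c(\mu)c(\nu)$). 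The required feature is that $c$ is injective on each set $\{\mu: r(\mu)=v\}$, and more precisely on pairs $(\mu,\nu)$ with $s(\mu)=s(\nu)$, in the sense that $c(\mu)c(\nu)^{-1}$ determines $(\mu,\nu)$ modulo the cancellation $(\mu\lambda,\nu\lambda)\sim(\mu,\nu)$. This is where the embedding hypothesis enters.

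Next, define the cocycle $\tilde c\colon G_\Lambda\to\Gamma$ by $\tilde c([\mu x,\, d(\mu)-d(\nu),\, \nu x]) = c(\mu)c(\nu)^{-1}$. Recall that elements of $G_\Lambda$ have the form $(\mu x, d(\mu)-d(\nu), \nu x)$ with $s(\mu)=s(\nu)=r(x)$. I would check that $\tilde c$ is well defined, which holds because $c$ is a functor and the representation changes by $\mu\mapsto\mu\lambda$, $\nu\mapsto\nu\lambda$. It is a continuous groupoid homomorphism, since it is locally constant on the basic open sets $Z(\mu,\nu)$.

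The general criterion (presumably stated earlier in the paper, following the approach of Li, Exel and de Castro--Kang) is that an étale groupoid $G$ is a partial transformation groupoid $\Gamma\ltimes G^{(0)}$ if and only if it admits a continuous cocycle $\tilde c\colon G\to\Gamma$ such that the map $g\mapsto(r(g),\tilde c(g),s(g))$ is injective. Some authors call such a cocycle \emph{pure} or \emph{unperforated}. Given this criterion, the main step is injectivity. Suppose $(\mu x, d(\mu)-d(\nu),\nu x)$ and $(\mu' x', d(\mu')-d(\nu'), \nu' x')$ have the same range, source and cocycle value. Using the factorisation property, I would extend all four paths along the common infinite paths until they have a common degree. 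This reduces to the situation $d(\mu)\vee d(\mu')$, where $x$ and $x'$ factor through the same initial segment. The equation $c(\mu)c(\nu)^{-1}=c(\mu')c(\nu')^{-1}$ then yields $c(\mu)^{-1}c(\mu')=c(\nu)^{-1}c(\nu')$. Injectivity of $\iota$, together with a normal-form argument in $\mathcal G(\Lambda)$, should force $\mu=\mu'$ and $\nu=\nu'$ after these extensions, so the two groupoid elements coincide. Note that the degree is already determined by the $\mathbb Z^k$-grading. If needed, I would enlarge $\Gamma$ to $\Gamma\times\mathbb Z^k$ and use $(\tilde c,\text{degree})$, which only helps injectivity.

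I expect the injectivity verification to be the main obstacle. Unlike the 1-graph case, where reduced words in the free group make it immediate, equality of words in $\mathcal G(\Lambda)$ can arise through the commutation relations. The fix I would try first is a precise formulation of the hypothesis ``$\Lambda$ embeds into its fundamental groupoid'': by Pask--Quigg--Raeburn, this is equivalent to $\Lambda$ being a sub-category of a groupoid with the unique factorisation of zigzags. From that, I would show that $\iota(\mu)^{-1}\iota(\mu')=\iota(\nu)^{-1}\iota(\nu')$ with $\mu,\mu'$ and $\nu,\nu'$ extended to a common degree implies equality. If this fails in general, I would restrict to the components and argue via the Ore-type condition coming from the factorisation property.
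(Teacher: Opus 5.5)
There is a genuine gap, made of two linked problems.

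First, the criterion you invoke is too weak. Injectivity of $g\mapsto(r(g),\tilde c(g),s(g))$ only says that $\tilde c$ is faithful on isotropy groups: if $g,h$ have the same image, then $g^{-1}h$ is an isotropy element with trivial cocycle value. The trivial cocycle on any principal groupoid satisfies it. An example is the AFS groupoid of a residually finite group that is not locally finite, which is principal but not a transformation groupoid by \autoref{cor-HLS}. What \autoref{thm:deCastroKang} actually requires is $\ker(\tilde c)=G^{(0)}$, equivalently injectivity of $g\mapsto(\tilde c(g),s(g))$. So you must also exclude non-units $g$ with $\tilde c(g)=1$ and $r(g)\neq s(g)$.

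Second, measured against the correct criterion, your group is too small. The spanning-tree trivialisation $g\mapsto t_{r(g)}^{-1}g\,t_{s(g)}$ into the vertex groups sends every tree morphism to $1$. This destroys exactly the feature you single out yourself, namely that $c$ separates distinct paths with a common source. Concretely, let $\Lambda$ be the path category of the directed graph with vertices $u,w,w'$, loops $a,b,b'$ at $u,w,w'$ respectively, and edges $e\colon u\to w$, $f\colon u\to w'$. This is a row-finite, source-free $1$-graph, and $1$-graphs always embed into their fundamental groupoids. Its only spanning tree is $\{e,f\}$, so $c(e)=c(f)=1$. For the infinite path $x=aaa\cdots$, the non-unit $(ex,0,fx)\in G_\Lambda$ then satisfies $\tilde c(ex,0,fx)=c(e)c(f)^{-1}=1$. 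Replacing $\Gamma$ by $\Gamma\times\Z^k$ does not help, because the degree is $0$. In this example $\tilde c$ is faithful on every isotropy group, so your weaker criterion is met, which also illustrates the first problem.

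The missing idea is to use a group that remembers the tree morphisms. The paper takes the universal group $F(\Lambda)$ generated by symbols $[\lambda]$ subject to $[\lambda][\mu]=[\lambda\mu]$; in the example above it is free on $[a],[b],[b'],[e],[f]$. The cocycle is $c(x,n,y)=[x(0,m)][y(0,l)]^{-1}$, and purity then needs no normal forms and no simultaneous extension of four paths:
\begin{enumerate}
\item If $c(x,n,y)=1$, the degree homomorphism $F(\Lambda)\to\Z^k$ forces $n=0$.
\item Choosing $m$ with $\sigma^m(x)=\sigma^m(y)$ gives $[x(0,m)]=[y(0,m)]$ for two paths with the same source.
\item By \autoref{lem-groupoidfunctor} (which rests on de Castro--Kang's result that every discrete groupoid admits a pure cocycle), distinct morphisms of $\Pi(\Lambda)$ with a common source are separated by some functor $\varphi\colon\Pi(\Lambda)\to\Gamma$. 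Since $\varphi\circ i_\Lambda$ induces a homomorphism on $F(\Lambda)$, it follows that $i_\Lambda(x(0,m))=i_\Lambda(y(0,m))$.
\item The embedding hypothesis yields $x(0,m)=y(0,m)$, and therefore $x=y$.
\end{enumerate}
This is exactly the step your proposal leaves at ``should force'', and it is the only place where the embedding hypothesis is needed.
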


Our approach is based on the following two key techniques. 
\begin{enumerate}
	\item The first key ingredient from \cite{deCastroKang2024} is a characterization of transformation groupoids in terms of \emph{pure cocycles} (see \autoref{def-pure}) $c\colon G\to \Gamma$ of the given groupoid $G$ into the given group $\Gamma$. The properties of $c$ (such as being locally constant and non-trivial) are quite rigid and allow us to obstruct the existence of transformation groupoid models for many Deaconu--Renault groupoids. In the case of coarse groupoids, this approach moreover gives a recipe to construct maps from the coarse space into a group out of a transformation groupoid model and vice versa.
	\item The second main technique is the abstraction of a structural property inherent to transformation groupoids which we call \emph{property $\Delta$} (see \autoref{def:Delta}). Property $\Delta$ is inspired by a stronger condition appearing in \cite[Lemma~5.16]{AnantharamanDelaroche} which aims to formalize the properties of the `diagonal' subgroupoid $\Gamma\ltimes (X\times X)\subset (\Gamma\ltimes X)\times (\Gamma\ltimes X)$ for a partial action $\Gamma\curvearrowright X$. 
	Our main observation is that property $\Delta$ can be obstructed whenever `infinite pieces of a groupoid can be uniformly approximated by compact pieces'. This core property of Higson--Lafforgue--Skandalis and Alekseev--Finn-Sell groupoids is exploited in \autoref{thm:HLS action-no-Delta-uniform} in order to show that they do not satisfy property $\Delta$ and therefore do not arise from partial actions. 
\end{enumerate}

\subsection*{Structure of the paper}

In \autoref{sec:property-delta} we introduce property~$\Delta$
and prove that all transformation groupoids arising from partial
actions satisfy it.
In \autoref{sec:HLS} we establish failure of property~$\Delta$
for HLS and AFS groupoids and deduce that they are not transformation
groupoids.
In \autoref{sec:dr-groupoids} we discuss Deaconu--Renault groupoids,
showing in particular that property~$\Delta$ is not sufficient
for being a transformation groupoid. We obtain our positive results concerning higher rank graphs here.
In \autoref{sec:coarse-groupoids} we characterize when coarse
groupoids arise from partial actions in terms of coarse embeddings into
groups.
In \autoref{sec:inner-amen} we study inner amenability and
prove that for HLS and AFS groupoids it is equivalent to amenability of
the underlying group.
Finally, in \autoref{sec:kirchberg} we realize all unital UCT Kirchberg algebras via partial actions of discrete groups.

\subsection*{Acknowledgements}
This work was supported by the Deutsche Forschungsgemeinschaft (DFG, German Research Foundation) under Germany's Excellence Strategy EXC 2044/2 - 3 90685587, Mathematics M\"unster: Dynamics--Geometry--Structure, and by the SFB 1442 of the DFG. The first author was supported by CNPq and the Humboldt Foundation.
We thank Gilles de Castro, Ruy Exel, Tim de Laat and Rufus Willett for stimulating discussions. We also thank Claire Anantharaman--Delaroche for sharing an early draft of \cite{AnantharamanDelaroche} with us and for her careful proofreading which helped us fix two errors in the first version of this manuscript.

\section{Transformation groupoids, pure cocycles and Property \texorpdfstring{$\Delta$}{Delta}}\label{sec:property-delta}
In this section we recall the definition of Hausdorff \'etale groupoids and develop general tools that allow us to characterize the subclass of transformation groupoids associated to partial actions among Hausdorff \'etale groupoids.
Recall that a \emph{groupoid} $G$ is a small category where all morphisms are invertible. 
Equivalently, a groupoid consists of a set $G$ of morphisms (called \emph{arrows}), a subset $G^{(0)}\subset G$ of objects\footnote{We identify every object with its identity morphism.} (called \emph{units}), maps $r,s\colon G\to G^{(0)}$ (called \emph{range} and \emph{source}), a map 
\[\cdot \colon G^{(2)}\coloneqq \{(g,h)\in G\times G\colon s(g)=r(h)\}\to G\]
(called \emph{multiplication}) and a necessarily unique map $(-)^{-1}\colon G\to G$ (called \emph{inversion}) satisfying a list of natural axioms (c.f.~\cite{Renault1980}).
We emphasize that the composition is written from right to left so that $gh$ is defined whenever $s(g)=r(h)$. Any group $\Gamma$ will be identified with the groupoid which has one unit and $\Gamma$ as endomorphisms. 

A \emph{topological groupoid} is a groupoid $G$ equipped with a topology such that all the above maps are continuous. 
A locally compact Hausdorff (topological) groupoid $G$ is called \emph{\'etale} if the range map $r\colon G\to G^{(0)}$ is a local homeomorphism. 
A subset $U\subset G$ is called a \emph{bisection} if $s|_U\colon U \to s(U)$ and $r|_U\colon U\to r(U)$ are homeomorphisms. 
Note that a locally compact Hausdorff groupoid is \'etale if and only if its topology has a basis of open bisections. 
For $u\in G^{(0)}$ we write $G_u\coloneqq\{g\in G\mid s(g)=u\}$ and
$G^u\coloneqq\{g\in G\mid r(g)=u\}$, and we write $G_u^u\coloneqq G_u\cap G^u$ for the isotropy group at $u$.
For a subset $K\subset G^{(0)}$, we denote by
\begin{equation}\label{eq-reduction}
	G(K)\coloneqq \{g\in G\mid s(g),r(g)\in K\}
\end{equation}
the \emph{reduction of $G$ to $K$.}
We endow the Cartesian product $G\times G$ with a groupoid structure such that $(G\times G)^{(0)}=G^{(0)}\times G^{(0)}$ and
\[
s(g,h)=(s(g),s(h)),\qquad r(g,h)=(r(g),r(h)),\qquad (g,h)^{-1}=(g^{-1},h^{-1}),
\]
for all $g,h\in G$. 

The following definition provides a vast source of examples of Hausdorff \'etale groupoids. We recommend \cite{Exel2017} for a comprehensive introduction to the partial actions in $C^*$-algebra theory.
\begin{defi}[\cite{Exel1994}]
	Let $\Gamma$ be a discrete group and let $X$ be a locally compact Hausdorff space. A \emph{partial action} $\Gamma\curvearrowright X$ consists of a collection of open subsets $\{D_\gamma\subset X\}_{\gamma\in \Gamma}$ and a collection of homeomorphisms $\{\gamma\cdot -\colon D_{\gamma^{-1}}\xrightarrow{\cong} D_\gamma\}_{\gamma\in \Gamma}$ such that 
	\begin{enumerate}
		\item $D_1=X$ and $1\in \Gamma$ acts as the identity;
		\item $\gamma^{-1}(D_\gamma\cap D_{\eta^{-1}})\subset D_{(\eta\gamma)^{-1}}$ for all $\gamma,\eta\in \Gamma$;
		\item ${(\eta\gamma)}(x)=\eta(\gamma(x))$ for all $x\in {\gamma}^{-1}(D_\gamma\cap D_{\eta^{-1}})$.
	\end{enumerate}
	The \emph{transformation groupoid} associated to the partial action is given by 
	\[\Gamma\ltimes X=\bigcup_{\gamma \in \Gamma}\{\gamma\}\times D_{\gamma^{-1}} \subset\Gamma \times X\]
	with unit space $(\Gamma\ltimes X)^{(0)}=\{1\}\times X\cong X$ and source, range and multiplication given by 
	 \[s(\gamma,x)=x,\quad  r(\gamma,x)=\gamma x,\quad (\eta,\gamma x)(\gamma,x)=(\eta\gamma,x)\]
	for $\gamma\in \Gamma,x\in D_{\gamma^{-1}}$ and $\eta\in \Gamma$ satisfying $\gamma x\in D_{\eta^{-1}}$. 
	The partial action is said to have \emph{closed graphs} if the sets 
	\[\{(x,\gamma x)\mid x\in D_{\gamma^{-1}}\}\subset X\times X,\quad \gamma\in \Gamma\] 
	are closed. 
\end{defi}
	Note that a partial action $\Gamma\curvearrowright X$ on a \emph{compact} space has closed graphs if and only if the domains $D_\gamma\subset X,\quad \gamma\in \Gamma$ are closed.

\begin{rem}
Partial transformation groupoids can also be defined for continuous actions of
topological groups in exactly the same way (see \cite{Abadie2004}). 
However, such transformation groupoids are \'etale if and only if the acting group is discrete.

Since all groupoids considered in this paper are \'etale, it follows that allowing
non-discrete topological groups does not lead to any additional examples. In
particular, whenever we show that a given étale groupoid cannot be realized as a
transformation groupoid of a partial action of a discrete group, it also cannot
arise from a partial action of any topological group.
\end{rem}

For a partial action $\Gamma\curvearrowright X$ as above, the associated transformation groupoid $\Gamma\ltimes X$ comes with a continuous groupoid homomorphism $c\colon \Gamma\ltimes X\to \Gamma$, $c(\gamma,x)=\gamma$ which completely recovers the partial action by a theorem of Castro--Kang \cite{deCastroKang2024}. Moreover, by an observation of Anantharaman-Delaroche, local properness of the cocycle is encoded by closedness of the graphs of the partial action.
is encoded by local properness of $c$. 
Recall from \cite[Definition~1.29]{AnantharamanDelaroche} that a continuous groupoid homomorphism $c\colon G\to \Gamma$ is called \emph{locally proper} if for every compact set $K\subset G^{(0)}$, the map 
\[r\times c\times s\colon G\to G^{(0)}\times \Gamma\times G^{(0)},\qquad  g\mapsto (r(g),c(g),s(g))\] 
is proper.
If $G^{(0)}$ is compact, local properness is equivalent to properness of $c$. %
\begin{theorem}[{\cite[Theorem 3.9]{deCastroKang2024}, \cite[Example 3.30(d)]{AnantharamanDelaroche}}]\label{thm:deCastroKang} %
	Let $G$ be a locally compact Hausdorff \'etale groupoid and let $\Gamma$ be a discrete group. Then the following are equivalent:
	\begin{enumerate}
		\item There is a partial action $\Gamma\curvearrowright G^{(0)}$ such that $G\cong \Gamma\ltimes G^{(0)}$;
		\item\label{item:pure} There is a continuous groupoid homomorphism $c\colon G\to \Gamma$ such that 
		\[\ker(c)\coloneqq \{g\in G\mid c(g)=1\}=G^{(0)}.\]
	\end{enumerate}
	The map $c\colon G\to \Gamma$ in item~\ref{item:pure} is locally proper if and only if the partial action $\Gamma\curvearrowright G^{(0)}$ has closed graphs.
	Moreover, then the preimages $c^{-1}(\gamma)\subset G$ for $\gamma\in \Gamma$ are clopen bisections.
\end{theorem}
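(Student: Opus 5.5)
The direction (i)$\Rightarrow$(ii) is immediate. Given a partial action, put $c(\gamma,x)=\gamma$ on $\Gamma\ltimes X$. This map is continuous, because $\Gamma\ltimes X$ carries the subspace topology of $\Gamma\times X$ with $\Gamma$ discrete. It is multiplicative by the formula $(\eta,\gamma x)(\gamma,x)=(\eta\gamma,x)$. Its kernel is $\{1\}\times D_1=\{1\}\times X$, which is exactly the unit space. Transporting $c$ along the isomorphism $G\cong\Gamma\ltimes G^{(0)}$ gives (ii).

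For (ii)$\Rightarrow$(i) I will carry out the following steps.
\begin{enumerate}
\item Set $U_\gamma\coloneqq c^{-1}(\gamma)$. Since $\Gamma$ is discrete and $c$ is continuous, each $U_\gamma$ is clopen in $G$.
\item Show $U_\gamma$ is a bisection. If $g,h\in U_\gamma$ with $s(g)=s(h)$, then $gh^{-1}$ is defined and $c(gh^{-1})=1$. Purity gives $gh^{-1}\in G^{(0)}$, so $g=h$. The same argument applies to $r$. Because $G$ is étale, $r$ and $s$ are open local homeomorphisms, so an injective restriction of $s$ or $r$ to the open set $U_\gamma$ is a homeomorphism onto an open set.
\item Define $D_\gamma\coloneqq r(U_\gamma)$, which is open, and note $D_{\gamma^{-1}}=s(U_\gamma)$ since $U_{\gamma^{-1}}=U_\gamma^{-1}$. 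Let $\gamma$ act by $x\mapsto r\bigl((s|_{U_\gamma})^{-1}(x)\bigr)$, a homeomorphism $D_{\gamma^{-1}}\to D_\gamma$.
\item Check the partial action axioms. We have $U_1=G^{(0)}$, so $D_1=X$ and $1$ acts trivially. For composition, if $x\in D_{\gamma^{-1}}$ and $\gamma x\in D_{\eta^{-1}}$, take the arrows $g\in U_\gamma$ with $s(g)=x$ and $h\in U_\eta$ with $s(h)=\gamma x=r(g)$. Then $hg\in U_{\eta\gamma}$ has source $x$ and range $\eta(\gamma x)$, which gives both axioms (ii) and (iii).
\item Define $\Phi\colon G\to\Gamma\ltimes G^{(0)}$ by $g\mapsto(c(g),s(g))$. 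It is a bijective homomorphism, with inverse assembled from the maps $(s|_{U_\gamma})^{-1}$. It is a homeomorphism because it restricts to homeomorphisms on the clopen pieces $U_\gamma$, and these match the clopen pieces $\{\gamma\}\times D_{\gamma^{-1}}$.
\end{enumerate}
This also proves the final assertion that the preimages $c^{-1}(\gamma)$ are clopen bisections.

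For the local properness statement, note that under $\Phi$ the map $r\times c\times s$ restricted to $U_\gamma$ corresponds to $x\mapsto(\gamma x,\gamma,x)$ on $D_{\gamma^{-1}}$. I will read local properness as properness of this map over $K\times\Gamma\times K$ for compact $K\subset G^{(0)}$. Since $\Gamma$ is discrete, a compact subset of $G^{(0)}\times\Gamma\times G^{(0)}$ meets only finitely many slices, so properness reduces to a statement for each fixed $\gamma$. For fixed $\gamma$, the map $U_\gamma\to G^{(0)}\times G^{(0)}$, $g\mapsto(r(g),s(g))$, is a homeomorphism onto the graph $\Gamma_\gamma=\{(\gamma x,x)\}$, up to swapping coordinates. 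A homeomorphism onto a subspace of a locally compact Hausdorff space is proper exactly when that subspace is closed. Indeed, if the graph is closed, preimages of compacts are compact. Conversely, if the graph is not closed, pick a limit point outside it. A compact neighbourhood $K\times K$ of that point has a preimage that is not compact, since it is homeomorphic to the non-closed subset $\Gamma_\gamma\cap(K\times K)$ of a compact Hausdorff space.

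The main obstacle is this last equivalence: keeping the quantifier over compact $K\subset G^{(0)}$ straight and making sure that closedness of each graph is equivalent to properness over every $K$. Everything else is routine verification. If the definition of local properness is instead meant as restriction to $G(K)$ for $K$ compact, I will rerun the same slice-by-slice argument with $r,s\in K$.
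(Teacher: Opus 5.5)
Your proof is correct. For the one assertion the paper actually proves, namely that the fibres $c^{-1}(\gamma)$ are clopen bisections, you use exactly the paper's argument: $c(gh^{-1})=1$ forces $gh^{-1}\in G^{(0)}$, hence $g=h$. The rest the paper does not prove but cites. The equivalence (i)$\Leftrightarrow$(ii) comes from de Castro--Kang, and the local-properness criterion from Anantharaman-Delaroche.

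Your steps 3--5 and the graph argument are a self-contained reconstruction of those cited results. You make explicit the partial action $D_\gamma=r(c^{-1}(\gamma))$, $\gamma\cdot x=r\bigl((s|_{c^{-1}(\gamma)})^{-1}(x)\bigr)$, and the isomorphism $g\mapsto(c(g),s(g))$. You then reduce local properness to an elementary fact: a homeomorphism onto a subspace of a locally compact Hausdorff space is proper if and only if its image is closed. You apply this slice by slice to $g\mapsto(r(g),s(g))$ on $c^{-1}(\gamma)$.

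The paper's route buys brevity. Yours shows the reader why the fibres of $c$ being bisections is exactly what makes the reconstruction of the partial action and of the isomorphism work.

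The quantifier issue you flag as the main obstacle is harmless. A compact $L\subset G^{(0)}\times\Gamma\times G^{(0)}$ has preimage inside $G(K)$, where $K$ is the union of the first and third projections of $L$. So properness of $r\times c\times s$ on all of $G$ is equivalent to properness of its restrictions to the closed subsets $G(K)$, and your slice argument covers both readings.
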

\begin{proof}
	The characterization of local properness of $c$ is due to \cite[Example 3.30(d)]{AnantharamanDelaroche}.  
	We only prove the last statement since it is not explicitly stated in \cite[Theorem 3.9]{deCastroKang2024}.  

	Let $\gamma\in \Gamma$ and let $g,h\in c^{-1}(\gamma)$ with $s(g)=s(h)$.
	Then, we have $c(gh^{-1})=c(g)c(h)^{-1}=\gamma\gamma^{-1}=1$ which implies $gh^{-1}\in \ker(c)=G^{(0)}$ and therefore $g=h$.
	This proves injectivity of $s|_{c^{-1}(\gamma)}$, and analogously of $r|_{c^{-1}(\gamma)}$.
\end{proof}
\begin{defi}\label{def-pure}
	A map $c\colon G\to \Gamma$ satisfying the condition in \autoref{thm:deCastroKang} \ref{item:pure} is called a \emph{pure cocycle}. 
\end{defi}

The following definition is inspired by \cite[Lemma~5.16]{AnantharamanDelaroche} where what we call \emph{property $\bar\Delta$} is used as a sufficient condition for Anantharaman-Delaroche's \emph{inner amenability} of a Hausdorff 
\'etale groupoid. 
Our definition of \emph{property $\Delta$} axiomatizes the properties of the subgroupoid $\Gamma\ltimes (X\times X)\subset (\Gamma\ltimes X)\times (\Gamma\ltimes X)$ for a partial action $\Gamma\curvearrowright X$ and is designed as an obstruction for a given groupoid against being isomorphic to a transformation groupoid. 
Compared to the theorem of Castro and Kang, this condition has the advantage of not including an auxiliary discrete group in its definition. 
We say that a subset $A\subset G$ of a Hausdorff \'etale groupoid $G$ has \emph{uniformly finite fibers}, if 
\begin{equation}\label{eq:defunifinite}
	\#_{\mathrm{fib}}(A)\coloneqq \sup_{x\in G^{(0)}}(|A\cap G_x|+|A\cap G^x|)<\infty.
\end{equation}
\begin{defi}[Property $\Delta$]\label{def:Delta}
A locally compact Hausdorff \'etale groupoid $G$ has \emph{property $\Delta$}
if there is an open subgroupoid $H\subset G\times G$ containing the diagonal
$\Delta_G\coloneqq \{(g,g)\mid g\in G\}$ such that for every compact subset
$K\subset G^{(0)}$ and every compact subset $C\subset G(K)$, the sets
\begin{equation}\label{eq:uniformlyfinite}
H\cap (G(K)\times C)
\qquad\text{and}\qquad
H\cap (C\times G(K))\subset G\times G
\end{equation}
have uniformly finite fibers (with respect to $G\times G$).
If $H$ is additionally closed and the sets in \eqref{eq:uniformlyfinite} are compact, then $G$ is said to have \emph{property $\bar \Delta$}.
\end{defi}

\begin{rem}
	We do not know if property $\Delta$ implies inner amenability. 
\end{rem}

The property $\Delta$ introduced above is designed so that all partial
transformation groupoids satisfy it. A stronger variant, property
$\bar\Delta$, corresponds to the case where the graphs of the partial action are closed.

\begin{lemma}[Partial transformation groupoids have property $\Delta$]\label{lem:partial-actions-have-Delta}
Let $\Gamma\ltimes X$ be the transformation groupoid associated to a partial action
of a discrete group $\Gamma$ on a locally compact Hausdorff space $X$.
Then $\Gamma\ltimes X$ has property $\Delta$. If $\Gamma\curvearrowright X$ has closed graphs, then $\Gamma\ltimes X$ moreover has property $\bar\Delta$.
\end{lemma}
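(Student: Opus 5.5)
The natural candidate is the ``diagonal'' subgroupoid singled out in the discussion before \autoref{def:Delta}. With $c\colon G=\Gamma\ltimes X\to\Gamma$, $c(\gamma,x)=\gamma$, the pure cocycle from \autoref{thm:deCastroKang}, I would set
\[
H\coloneqq\{(g,h)\in G\times G\mid c(g)=c(h)\}.
\]
I will check three things in turn: $H$ is an open subgroupoid containing $\Delta_G$; the fiber-counting condition \eqref{eq:uniformlyfinite} holds; and, under the closed-graph hypothesis, $H$ is closed and the sets in \eqref{eq:uniformlyfinite} are compact.

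\textbf{$H$ is an open subgroupoid containing $\Delta_G$.} Since $c$ is a groupoid homomorphism, $H$ is closed under multiplication and inversion. It contains $G^{(0)}\times G^{(0)}$ because $c$ sends units to $1$, and it clearly contains $\Delta_G$. Since $\Gamma$ is discrete, $c$ is locally constant, so $H=\bigcup_{\gamma}c^{-1}(\gamma)\times c^{-1}(\gamma)$ is open; indeed it is clopen.

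\textbf{Uniformly finite fibers.} Fix compact $K\subset X$ and compact $C\subset G(K)$. Because the sets $c^{-1}(\gamma)$ are open and $C$ is compact, $c(C)=F$ is a finite subset of $\Gamma$. Now take a unit $(x,y)\in G^{(0)}\times G^{(0)}$. An element $(g,h)\in H\cap(G(K)\times C)$ with source $(x,y)$ satisfies $c(g)=c(h)\in F$. By \autoref{thm:deCastroKang}, each $c^{-1}(\gamma)$ is a bisection, so $g$ is determined by $(c(g),x)$ and $h$ by $(c(h),y)$. Hence there are at most $|F|$ such pairs, and the same bound holds for the range fibers. The set $H\cap(C\times G(K))$ is handled symmetrically. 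This gives $\#_{\mathrm{fib}}\le 2|F|$. Note that compactness of $K$ is not even needed for this part.

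\textbf{Property $\bar\Delta$ under closed graphs.} $H$ is already closed, as noted above. For compactness, I use that $H\cap(G(K)\times C)\subset\bigcup_{\gamma\in F}(c^{-1}(\gamma)\cap G(K))\times C$, and that $H\cap(G(K)\times C)$ is closed in $G\times G$. So it suffices to show that $c^{-1}(\gamma)\cap G(K)$ is compact for each $\gamma\in F$. By \autoref{thm:deCastroKang}, closed graphs imply that $c$ is locally proper. Then $c^{-1}(\gamma)\cap G(K)$ is the preimage of the compact set $K\times\{\gamma\}\times K$ under $r\times c\times s$, hence compact. Alternatively, one can argue directly: this set is homeomorphic via $(\gamma,x)\mapsto(x,\gamma x)$ to the intersection of the closed graph of $\gamma$ with $K\times K$.

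The argument is essentially bookkeeping. The only step needing care is the compactness claim, specifically matching ``closed graphs'' with local properness and being careful about closedness inside $G\times G$. I would rely on the equivalence stated in \autoref{thm:deCastroKang} rather than reprove it.
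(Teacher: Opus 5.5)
Your proposal is correct and follows the paper's proof. You use the same subgroupoid $H=\{((\gamma,x),(\gamma,y))\}$, written as $\{(g,h)\mid c(g)=c(h)\}$, and the same fibre count using that each $c^{-1}(\gamma)$ is a bisection, giving $\#_{\mathrm{fib}}\le 2|F|$. For property $\bar\Delta$ the paper only notes that $c$ is locally proper and defers to the proof of Anantharaman-Delaroche's Proposition~5.17, whereas you spell out that argument yourself (clopenness of $H$ plus compactness of $c^{-1}(\gamma)\cap G(K)$); this makes the proof self-contained rather than a different route.
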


\begin{proof}
Define
\[
H\coloneqq \{((\gamma,x),(\gamma,y))\mid \gamma\in \Gamma,\ x,y\in D_{\gamma^{-1}}\}
\subset (\Gamma\ltimes X)\times (\Gamma\ltimes X).
\]
Since $\Gamma$ is discrete and the domains are open, $H$ is open.
It is immediate that $H$ is a subgroupoid and contains the diagonal.

Let $K\subset X$ be compact and let $C\subset (\Gamma\ltimes X)(K)$ be compact.
The projection $\pr_\Gamma\colon \Gamma\ltimes X\to\Gamma$, $(\gamma,x)\mapsto\gamma$
is continuous and $\Gamma$ is discrete, hence $\pr_\Gamma(C)$ is finite.
Thus there exists a finite subset $F\subset \Gamma$ such that $C\subset \bigcup_{\gamma\in F} C_\gamma$,
where each $C_\gamma\coloneqq C\cap (\{\gamma\}\times D_{\gamma^{-1}})$ is compact since $\{\gamma\}\times D_{\gamma^{-1}}\subset \Gamma\ltimes X$ is closed.
We have
\begin{equation}\label{eq:HcapKC}
H\cap\bigl((\Gamma\ltimes X)(K)\times C\bigr)
\subset \bigcup_{\gamma\in F} \Bigl(\bigl(\{\gamma\}\times(K\cap D_{\gamma^{-1}})\bigr)\times C_\gamma\Bigr).
\end{equation}
For each fixed unit of \( ( \Gamma\ltimes X)\times (\Gamma\ltimes X)\), each summand contributes at most one element to the source fibre and at most one element to the range fibre. Hence
\[
\#_\mathrm{fib}\bigl(H\cap((\Gamma\ltimes X)(K)\times C)\bigr)\le 2|F|,
\]
where ${\#_{\mathrm{fib}}}$ is defined in \eqref{eq:defunifinite} (but with respect to $G\times G$). The same argument applies to $H\cap(C\times (\Gamma\ltimes X)(K))$. This shows that $\Gamma\ltimes X$ has property $\Delta$. 

Now assume further that $\Gamma\curvearrowright X$ has closed graphs. 
Then the cocycle 
\[c\colon \Gamma\ltimes X\to \Gamma,\quad c(\gamma,x)=\gamma\]
is locally proper by \autoref{thm:deCastroKang}. 
It follows from the proof of \cite[Proposition~5.17]{AnantharamanDelaroche} that $\Gamma\ltimes X$ has property $\bar \Delta$.
\end{proof}

\section{Higson--Lafforgue--Skandalis and Alekseev--Finn-Sell groupoids}\label{sec:HLS} 

Our first class of Hausdorff \'etale groupoids which are not transformation groupoids by partial actions goes back to Higson--Lafforgue--Skandalis \cite{Higson2002} and Alekseev--Finn--Sell \cite{Alekseev2018}. 
Higson--Lafforgue--Skandalis groupoids (\emph{HLS groupoids}) were introduced in \cite{Higson2002} as counterexamples to the Baum--Connes conjecture. The main feature of these groupoids for disproving the Baum--Connes conjecture are their non-exactness properties which persist at the level of $K$-theory. These non-exactness properties were later used by Willett \cite{Willett2015} to provide an example of a non-amenable \'etale groupoid whose full and reduced $C^*$-algebra agree. Later, Alekseev--Finn-Sell groupoids (\emph{AFS groupoids}) were constructed as a principal variant of HLS groupoids with similar properties. 

In this section, we add to the list of exotic properties of HLS and AFS groupoids by proving that they lack property $\Delta$ (\autoref{def:Delta}) and therefore do not arise as transformation groupoids from partial group actions.
The argument relies on the existence of large families of elements which can be approximated by compact pieces and which remain uniformly close to the diagonal in $G\times G$ under translations by the acting group.
This phenomenon occurs near the
fiber at infinity and reflects the equicontinuity of the translation action of a group on its profinite completion. 
We will moreover exhibit HLS and AFS groupoids as the first examples of non inner amenable groupoids in \autoref{sec:inner-amen}.

We recall the constructions from \cite{Higson2002,Alekseev2018} below.

\begin{defi}
	An \emph{approximated group} is a pair $(\Gamma, (N_k)_{k\in \N})$ where $\Gamma$ is a discrete group and 
	\[
\Gamma \supset N_1 \supset N_2 \supset \cdots
\]
is a decreasing sequence of finite-index normal subgroups such that $\bigcap_{k\in\N} N_k=\{1\}$.
\end{defi}

Let $(\Gamma,(N_k)_{k\in \N})$ be an approximated group. 
Denote by $\N\cup\{\infty\}$ the one-point compactification of the nonnegative integers $\N=\{0,1,2,\dotsc\}$.
For $k\in\N\cup\{\infty\}$, we write $\Gamma_k\coloneqq \Gamma/N_k$, where $N_\infty\coloneqq \{1\}$ and denote by
$\pi_k\colon\Gamma\to \Gamma_k$ the quotient map (with $\pi_\infty=\mathrm{id}_\Gamma$).
We define a topology on 
\[
G_{\mathrm{HLS}}(\Gamma,(N_k)_{k\in \N}) \coloneqq \coprod_{k\in \N\cup\{\infty\}} \{k\}\times \Gamma_k,
\]
by declaring the singletons $\{(k,\gamma)\}$ for $k\in \N$ and $\gamma\in \Gamma_k$ as well as the sets
\begin{equation}\label{Nngamma}
N(n,\gamma)\coloneqq \{(k,\pi_k(\gamma))\mid\ k\in\N\cup \{\infty\},\ k\ge n\},
\end{equation}
for $n\in \N$ and $\gamma\in \Gamma$ to be open (which incidentally forces them to be compact). 
Then $G_{\mathrm{HLS}}(\Gamma,(N_k)_{k\in \N})$ is a locally compact Hausdorff groupoid with $r,s\colon G_{\mathrm{HLS}}(\Gamma,(N_k)_{k\in \N})\to \N\cup\{\infty\}$ given by the projection and multiplication induced by the multiplication of $\Gamma$. 
\begin{defi}
	We call $G=G_{\mathrm{HLS}}(\Gamma,(N_k)_{k\in \N})$ the Higson--Lafforgue--Skandalis groupoid (\emph{HLS groupoid}) associated to the approximated group $(\Gamma,(N_k)_{k\in \N})$.
\end{defi}

\begin{defi}
	Let $G$ be a Hausdorff \'etale groupoid and let $Y$ be a locally compact Hausdorff space. 
	An \emph{action} $G\curvearrowright Y$ consists of a continuous map $\rho\colon Y\to G^{(0)}$ (the \emph{anchor map}) and a continuous map 
	\[-\cdot- \colon G\times_{s,\rho}Y\coloneqq \{(g,y)\in G\times Y\mid s(g)=\rho(y)\}\to Y\]
	(the \emph{action map}) such that 
	\begin{enumerate}
		\item $\rho(g\cdot y)=r(g)$ for all $(g,y)\in G\times_{s,\rho}Y$;
		\item $g\cdot (h\cdot y)=(gh)\cdot y$ for all $(g,h)\in G^{(2)}$ and $y\in Y$ with $s(h)=\rho(y)$.
	\end{enumerate}
	Given an action $G\curvearrowright Y$, the associated \emph{transformation groupoid} is defined as 
	$G\ltimes Y\coloneqq G\times_{s,\rho}Y$
	with unit space $Y\cong G^{(0)}\times_{s,\rho}Y$ and 
	\[s(g,y)\coloneqq y,\quad r(g,y)\coloneqq g\cdot y,\quad (h,gy)(g,y)\coloneqq (hg,y)\]
	for all $(g,y)\in G\ltimes Y$ and $h\in G$ with $s(h)=r(g)$. 
\end{defi}

Let $(\Gamma,(N_k)_{k\in \N})$ be an approximated group. 
Denote by $\widehat \Gamma_\infty\coloneqq \varprojlim_{k\in \N} \Gamma_k$ the profinite completion of $\Gamma$ with respect to the family $(N_k)_{k\in \N}$\footnote{Note that the definition of $\widehat \Gamma_\infty$ depends on the choice of approximating sequence $(N_k)_{k\in \N}$.}. Concretely, $\widehat \Gamma_\infty$ is given by the closed image of the diagonal map 
\[\Gamma\hookrightarrow \prod_{k\in \N}\Gamma_k,\quad \gamma\mapsto (\pi_k(\gamma)_{k\in \N}).\] 
For $k\in \N$, we define $\widehat \Gamma_k\coloneqq \Gamma_k$ and denote by $\widehat\pi_{k,n}\colon \widehat \Gamma_n\to \widehat \Gamma_k$ the canonical quotient map for $k\leq n\leq \infty$. 
We define a topology on 
\begin{equation}\label{alekseevfinnsellspace}
	\widehat X\coloneqq \coprod_{k\in \N\cup\{\infty\}} \{k\}\times \widehat \Gamma_k
\end{equation}
by declaring as basic open sets the singletons $\{(k,\gamma)\}$ for $k\in \N$ and $\gamma\in \widehat \Gamma_k$ as well as the \emph{shadows}
\begin{equation}\label{shadow}
	\mathrm{Sh}(\gamma)\coloneqq \bigcup_{k\leq n\leq \infty}\{k\}\times \widehat \pi_{k,n}^{-1}(\gamma),
\end{equation}
for $k\in \N$ and $\gamma\in \widehat \Gamma_k$.
We define an action $G_{\mathrm{HLS}}(\Gamma,(N_k)_{k\in \N})\curvearrowright \widehat X$ by defining the anchor map $\rho \colon \widehat X\to \N\cup \{\infty\}$ to be the projection and the action map
\[G_{\mathrm{HLS}}(\Gamma,(N_k)_{k\in \N})\times_{s,\rho}\widehat X\to \widehat X\]
induced by the left multiplication maps $\Gamma_k\times \widehat \Gamma_k\to \widehat \Gamma_k$ for each $k\in \N\cup \{\infty\}$. 

\begin{defi}
	The associated transformation groupoid 
	\[ G_{\mathrm{AFS}}(\Gamma,(N_k)_{k\in \N})\coloneqq G_{\mathrm{HLS}}(\Gamma,(N_k)_{k\in \N})\ltimes \widehat X\]
	is called the Alekseev--Finn-Sell groupoid (\emph{AFS groupoid}) associated to $(\Gamma,(N_k)_{k\in \N})$.
\end{defi}

The failure of property $\Delta$ in the HLS and AFS constructions
comes from the presence of large ``equicontinuous'' pieces of the
groupoid near the fiber at infinity.
More precisely, one can find a boundary point $x_0\in \widehat X_\infty$ and a finite set $S\subseteq \Gamma$ such that the subgroup generated by $S$ act equicontinuously at $x_0$. 
After approximating $x_0$ by points in one of the finite fibers, this produces
infinite fibers in any open subgroupoid containing the diagonal, violating the uniform finiteness condition in the definition of property $\Delta$.

For this we will need the following key property for the action $\Gamma\curvearrowright \widehat X$ defining the AFS-groupoid.

\begin{defi}%
	An action $\Gamma\curvearrowright Y$ of a discrete group on a compact space is called \emph{equicontinuous at $y_0\in Y$} if for every open set $U\subset Y\times Y$ containing the diagonal $\Delta_Y$, there is an open neighbourhood $V$ of $y_0$ such that 
	\[\bigcup_{\gamma\in \Gamma}\gamma(\{y_0\}\times V)\subset U.\]
\end{defi}

\begin{prop}\label{prop:AFScontinuous}
	Let $(\Gamma,(N_k)_{k\in \N})$ be an approximated group. Consider the space 
	\[\widehat X\coloneqq \coprod_{k\in \N\cup\{\infty\}} \{k\}\times \widehat \Gamma_k\]
	as in \eqref{alekseevfinnsellspace}. Then the action $\Gamma\curvearrowright \widehat X$ given by $\gamma\cdot (k,\eta)\coloneqq (k,\pi_k(\gamma)\eta)$ for all $k\in \N\cup \{\infty\}, \gamma\in \Gamma,\eta\in \widehat\Gamma_k$ is equicontinuous at every point.%
\end{prop}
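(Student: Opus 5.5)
Two cases, according to whether the point $y_0$ lies in a finite fiber or in the fiber at infinity.

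\emph{Finite fibers.} If $y_0=(k,\eta)$ with $k\in\N$, then $\{y_0\}$ is open. Take $V=\{y_0\}$. Then
\[\bigcup_\gamma\gamma(\{y_0\}\times V)=\{(\gamma y_0,\gamma y_0)\}\subset\Delta_{\widehat X}\subset U,\]
and there is nothing more to check.

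\emph{Fiber at infinity.} Let $y_0=(\infty,\eta_0)$ with $\eta_0\in\widehat\Gamma_\infty$, and let $U\supset\Delta_{\widehat X}$ be open.

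\textbf{Step 1: the shadows form uniform neighbourhoods.} Write $[\widehat\pi_{k,n}(\eta)]$ for the shadow $\mathrm{Sh}(\widehat\pi_{k,n}(\eta))$ with $k$ understood. For each $k\in\N$ set
\[W_k\coloneqq\bigcup_{\theta\in\Gamma_k}\mathrm{Sh}(\theta)\times\mathrm{Sh}(\theta)\ \cup\ \{\text{pairs of equal points in levels }<k\}.\]
This is an open neighbourhood of the diagonal. The fibers at levels $<k$ are finite discrete sets, since $\mathrm{Sh}(\theta)$ only contains levels $\ge k$. I claim that $W_k\subset U$ for $k$ large.

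To prove the claim, suppose it fails for every $k$. Then there are pairs $(a_k,b_k)\notin U$ with $a_k,b_k\in\mathrm{Sh}(\theta_k)$ for some $\theta_k\in\Gamma_k$. Since $\widehat X$ is compact, pass to a convergent subnet $(a_k,b_k)\to(a,b)$. Both limits must lie at level $\infty$, because the levels of $a_k,b_k$ are at least $k$. The points $a_k$ and $b_k$ have the same image in $\Gamma_k$, so their projections to $\Gamma_m$ agree for every $m\le k$. Passing to the limit, $a$ and $b$ have the same image in every $\Gamma_m$, hence $a=b$ in $\widehat\Gamma_\infty=\varprojlim\Gamma_m$. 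This puts $(a,b)$ in the diagonal, and therefore in $U$. But the complement of $U$ is closed and contains every $(a_k,b_k)$, so it contains $(a,b)$, a contradiction.

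This step needs $\widehat X$ to be compact Hausdorff. I take that as given: the space is the standard one-point-style compactification, and the definition of equicontinuity presupposes a compact space.

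\textbf{Step 2: translation invariance.} Fix $k$ with $W_k\subset U$, and let $V\coloneqq\mathrm{Sh}(\widehat\pi_{k,\infty}(\eta_0))$, an open neighbourhood of $y_0$. For $\gamma\in\Gamma$ and $y\in V$, left multiplication by $\pi_n(\gamma)$ is compatible with the quotient maps $\widehat\pi_{k,n}$. Hence $\gamma$ maps $\mathrm{Sh}(\theta)$ onto $\mathrm{Sh}(\pi_k(\gamma)\theta)$. Consequently $(\gamma y_0,\gamma y)$ lies in $\mathrm{Sh}(\pi_k(\gamma)\theta)^2\subset W_k\subset U$, where $\theta=\widehat\pi_{k,\infty}(\eta_0)$. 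This proves equicontinuity at $y_0$.

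\textbf{Main obstacle.} I expect Step 1, the compactness argument, to be the main obstacle, since it is where the topology of $\widehat X$ enters. The translation invariance in Step 2 is purely algebraic.
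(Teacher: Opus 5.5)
Your proof is correct and is essentially the paper's argument: the same case split, the same choice $V=\mathrm{Sh}(\widehat\pi_{k,\infty}(\eta_0))$, and the same use of the fact that $\Gamma$ permutes the level-$k$ shadows, so that $\gamma V$ and $\gamma y_0$ lie in a common level-$k$ shadow. The only difference is how you show that the level-$k$ shadow squares lie in $U$ for large $k$. You argue by contradiction with subnets, using that $\widehat\Gamma_\infty=\varprojlim\Gamma_m$ separates points. The paper instead covers the compact diagonal $\Delta_{\widehat X_\infty}$ by finitely many squares $\mathrm{Sh}(\gamma_i)^{\times 2}\subset U$, takes $k=\max k_i$, and uses that each level-$k$ shadow is nested inside one of these coarser shadows.
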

\begin{proof}
	Fix $x_0\in \widehat X$ and an open neighbourhood $U$ of the diagonal $\Delta_{\widehat X}\subset \widehat X\times \widehat X$. 
	For each $k\in \N\cup \{\infty\}$, we write $\widehat X_k\coloneqq \rho^{-1}(k)=\{k\}\times \widehat{\Gamma}_k\subset \widehat X$. 
	If $x_0\in \widehat X_k$ for some $k\in \N$, then the set $V=\{x_0\}$ is an open neighbourhood of $x_0$ so that $\bigcup_{\gamma\in \Gamma}\gamma (\{x_0\}\times V)\subset \Delta_{\widehat X}\subset U$.

	Now assume that $x_0\in \widehat X_\infty$. 
	By compactness, we may without loss of generality assume that there exist $k_i\in \N$ and $\gamma_i\in \Gamma_{k_i}$ for $i=1,\dotsc,m$ such that 
	\[U= \Delta_{\widehat X\setminus \widehat X_\infty}\cup \mathrm{Sh}(\gamma_1)^{\times 2}\cup\dotsb\cup \mathrm{Sh}(\gamma_m)^{\times 2}\subset \widehat X \times \widehat X,\]
	where $\mathrm{Sh}(\gamma_i)$ is as in \eqref{shadow}. 
	Define $k\coloneqq \max\{k_1,\dotsc,k_m\}$ and $V\coloneqq \mathrm{Sh}(\widehat\pi_{k,\infty}(x_0))$. 
	We claim that 
	\[\bigcup_{\gamma\in \Gamma}\gamma(\{x_0\}\times V)\subset U.\]
	To see this, let $\gamma\in \Gamma$. Since $U$ contains the diagonal $\Delta_{\widehat X_\infty}$, there is $i=1,\dotsc,m$ with $\gamma x_0\in \mathrm{Sh}(\gamma_i)$, which by definition means $\widehat \pi_{k_i,\infty}(\gamma x_0)=\gamma_i$. 
	Now let $\eta\in \gamma V=\mathrm{Sh}(\widehat \pi_{k,\infty}(\gamma x_0))$, and let $n\geq k$ be such that $\eta\in \widehat X_n$. Then, we have 
	\[\widehat \pi_{k_i,n}(\eta)=\widehat \pi_{k_i,k}(\widehat \pi_{k,n}(\eta))=\widehat \pi_{k_i,k}(\widehat \pi_{k,\infty}(\gamma x_0))=\widehat \pi_{k_i,\infty}(\gamma x_0)=\gamma_i,\]
	and thus $\eta\in \mathrm{Sh}(\gamma_i)$. 
	This proves that $\gamma (\{x_0\}\times V)\subset U$. 
\end{proof}

The following theorem and corollary are the main results of this section.

\begin{theorem}\label{thm:HLS action-no-Delta-uniform}
Let $(\Gamma,(N_k)_{k\in \N})$ be an approximated group with associated HLS groupoid $G=G_{\mathrm{HLS}}(\Gamma,(N_k)_{k\in \N})$. 
Let $G\curvearrowright Y$ be an action
on a compact Hausdorff space with open anchor map $\rho\colon Y\to\N\cup\{\infty\}$.
Assume that there is a finitely generated infinite subgroup $\Lambda\subset \Gamma$ such that the induced action $\Lambda\curvearrowright Y$ is equicontinuous at some point $y_0\in Y_\infty\coloneqq \rho^{-1}(\infty)$. 
Then $G\ltimes Y$ does not have property $\Delta$.
\end{theorem}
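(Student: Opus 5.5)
The plan is to argue by contradiction. Suppose $H\subset (G\ltimes Y)\times(G\ltimes Y)$ is an open subgroupoid containing the diagonal, as in \autoref{def:Delta}. I take $K\coloneqq Y$, which is compact, and choose the simplest compact set $C\coloneqq Y=(G\ltimes Y)^{(0)}$, the unit space. I will then exhibit a single unit $(y_0,y')$ of $(G\ltimes Y)\times(G\ltimes Y)$ whose source fibre in $H\cap\bigl((G\ltimes Y)\times Y\bigr)$ is infinite. This violates uniform finiteness of fibres. The key point is that the two coordinates of $(G\ltimes Y)^2$ may live over different levels $\infty$ and $k$. At level $k$ the elements of $\Lambda\cap N_k$ act trivially, while at level $\infty$ they remain pairwise distinct.

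\textbf{Step 1: a uniform neighbourhood from openness of $H$.} Fix a finite symmetric generating set $S$ of $\Lambda$. For $\lambda\in S$ and $z\in Y$ write $(\lambda,z)$ for the arrow $((\rho(z),\pi_{\rho(z)}(\lambda)),z)\in G\ltimes Y$. The sets $N(0,\lambda)\times_{s,\rho} Y$ are compact open bisections. Since $H$ is open and contains all $((\lambda,z),(\lambda,z))$, I use the basic neighbourhoods $N(n,\lambda)\times_{s,\rho}U$ together with a tube-lemma/compactness argument over $Y$. This should give an open set $W\subset Y\times Y$ containing $\Delta_Y$ with the following property: for all $(z,z')\in W$ and $\lambda\in S$,
\[
\bigl((\lambda,z),(\lambda,z')\bigr)\in H ,
\]
even when $\rho(z)\neq\rho(z')$. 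Getting this uniformity in the mixed-level situation is the step I expect to need the most care. One has to check that openness at the points $((\lambda,z),(\lambda,z))$ really allows the second coordinate to move to a nearby point at a different level, with the group label carried along via $\pi_k$. Local compactness of $N(n,\lambda)$ together with compactness of $Y$ should handle this.

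\textbf{Step 2: equicontinuity and the subgroupoid property.} By equicontinuity of $\Lambda\curvearrowright Y$ at $y_0$, there is an open neighbourhood $V$ of $y_0$ with $\gamma(y_0,z)\in W$ for all $\gamma\in\Lambda$ and $z\in V$. Write $\gamma=\lambda_\ell\cdots\lambda_1$ with $\lambda_i\in S$. Each intermediate pair $(\lambda_{i}\cdots\lambda_1 y_0,\ \lambda_i\cdots\lambda_1 z)$ lies in $W$, so Step~1 gives the corresponding generator pairs in $H$. Multiplying them in the subgroupoid $H$ yields
\[
\bigl((\gamma,y_0),(\gamma,z)\bigr)\in H\qquad\text{for all }\gamma\in\Lambda,\ z\in V .
\]

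\textbf{Step 3: producing infinitely many arrows in one fibre.} Since $\rho$ is open, $\rho(V)$ is a neighbourhood of $\infty$, so I can pick $y'\in V$ with $\rho(y')=k$ finite. Because $N_k$ has finite index and $\Lambda$ is infinite, $\Lambda\cap N_k$ is infinite. For $\gamma\in\Lambda\cap N_k$ we have $\pi_k(\gamma)=1$, so $(\gamma,y')$ is the unit $y'\in C$. On the other hand, the arrows $(\gamma,y_0)$ at level $\infty$ are pairwise distinct, since $\pi_\infty=\mathrm{id}$. Hence the elements $((\gamma,y_0),y')$, for $\gamma\in\Lambda\cap N_k$, are infinitely many elements of
\[
H\cap\bigl((G\ltimes Y)(K)\times C\bigr)
\]
all having source $(y_0,y')$. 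This contradicts the uniform finiteness required in \eqref{eq:uniformlyfinite}, so $G\ltimes Y$ does not have property $\Delta$.
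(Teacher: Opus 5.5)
Your proof is correct and is essentially the paper's argument: generator pairs near the diagonal lie in $H$, equicontinuity at $y_0$ together with the subgroupoid property gives $((\gamma,y_0),(\gamma,z))\in H$ for all $\gamma\in\Lambda$ and $z\in V$, and a finite-level point of $V$ yields an infinite source fibre; the only cosmetic difference is that you take $C=Y$ and $\gamma\in\Lambda\cap N_k$, where the paper takes $C=\Gamma_n\ltimes Y_n$ and all $\gamma\in\Lambda$. Your Step~1 needs no tube lemma, because $z\mapsto(\lambda,z)$ is continuous (it inverts $s$ on the compact bisection $N(0,\lambda)\times_{s,\rho}Y$), so $W\coloneqq\bigcap_{\lambda\in S}\{(z,z')\in Y\times Y\mid ((\lambda,z),(\lambda,z'))\in H\}$ is already open as a finite intersection of preimages of the open set $H$.
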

Recall that a discrete group is called \emph{locally finite} if every finitely generated subgroup is finite.
Together with \autoref{prop:AFScontinuous}, \autoref{thm:HLS action-no-Delta-uniform} implies the following.

\begin{cor}\label{cor-HLS}
Let $(\Gamma,(N_k)_{k\in \N})$ be an approximated group such that $\Gamma$ is not locally finite. 
Then $G_{\mathrm{HLS}}(\Gamma,(N_k)_{k\in \N})$ and $G_{\mathrm{AFS}}(\Gamma,(N_k)_{k\in \N})$ do not have property $\Delta$. In particular, they do not arise as transformation groupoids associated to partial actions of discrete groups. 
\end{cor}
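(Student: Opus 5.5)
The corollary follows by feeding suitable actions into \autoref{thm:HLS action-no-Delta-uniform} and then combining with \autoref{lem:partial-actions-have-Delta}. Since $\Gamma$ is not locally finite, I pick a finitely generated infinite subgroup $\Lambda\subset\Gamma$ once and for all. The rest is to check the hypotheses of the theorem in two cases.

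\emph{The AFS groupoid.} Here I take $Y=\widehat X$ from \eqref{alekseevfinnsellspace}, with the action $G_{\mathrm{HLS}}\curvearrowright \widehat X$ defining $G_{\mathrm{AFS}}=G_{\mathrm{HLS}}\ltimes\widehat X$.
\begin{enumerate}
\item $\widehat X$ is compact Hausdorff. Any open cover must contain a shadow about each point of $\widehat X_\infty$. By compactness of the profinite group $\widehat\Gamma_\infty$, finitely many shadows cover $\widehat X_\infty$. These shadows cover all levels $n\geq$ some $k$, and the remaining levels are finite.
\item The anchor $\rho$ is open. Singletons at finite levels map to singletons, which are open. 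A shadow $\mathrm{Sh}(\gamma)$ with $\gamma\in\widehat\Gamma_k$ maps to $\{n\ge k\}\cup\{\infty\}$, which is open. This uses surjectivity of $\widehat\pi_{k,n}$.
\item The action of $\Lambda$ induced through $G_{\mathrm{HLS}}$ is the restriction of the action $\Gamma\curvearrowright\widehat X$ from \autoref{prop:AFScontinuous}. Concretely, $\gamma$ acts on $\widehat X_k$ via the arrow $(k,\pi_k(\gamma))$. It is therefore equicontinuous at every point, in particular at any $y_0\in\widehat X_\infty$, and restricting to a subgroup preserves equicontinuity.
\end{enumerate}
The theorem then shows that $G_{\mathrm{AFS}}$ lacks property $\Delta$.

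\emph{The HLS groupoid.} I take $Y=\N\cup\{\infty\}$ with the trivial action: $\rho=\id$ and $(k,\gamma)\cdot k=k$. Then $G\ltimes Y\cong G$, $\rho$ is open, and $Y$ is compact. The action of $\Lambda$ on $Y$ is trivial, so $\bigcup_\gamma\gamma(\{y_0\}\times V)=\{y_0\}\times V$. Equicontinuity at $y_0=\infty$ therefore just says that each neighbourhood $U$ of the diagonal contains $\{\infty\}\times V$ for some neighbourhood $V$ of $\infty$. This holds since $(\infty,\infty)\in U$ and $U$ is open in the product topology. So $G_{\mathrm{HLS}}$ lacks property $\Delta$ as well.

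Finally, both groupoids are locally compact Hausdorff étale. If either were isomorphic to a partial transformation groupoid $\Gamma'\ltimes X$, it would have property $\Delta$ by \autoref{lem:partial-actions-have-Delta}. Property $\Delta$ is an isomorphism invariant, so this is a contradiction. I expect no real obstacle. The only points needing care are the compactness and openness checks for $\widehat X$, and the identification of the induced $\Lambda$-action with the one in \autoref{prop:AFScontinuous}.
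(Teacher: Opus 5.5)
Your proposal is correct and is essentially the paper's argument, which consists only of the remark that \autoref{thm:HLS action-no-Delta-uniform} together with \autoref{prop:AFScontinuous} implies the corollary. You apply the theorem to $Y=\widehat X$ for the AFS groupoid and to the trivial action on $Y=\N\cup\{\infty\}$ (so that $G\ltimes Y\cong G$) for the HLS groupoid, then invoke \autoref{lem:partial-actions-have-Delta}; the checks you add (compactness of $\widehat X$, openness of $\rho$, identifying the induced $\Lambda$-action, equicontinuity of the trivial action) are the routine verifications the paper leaves implicit.
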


The above applies to many amenable groups, even to abelian ones. 
For instance, if $\Gamma=\Z$, then both $G_{\mathrm{HLS}}$ and $G_{\mathrm{AFS}}$ are amenable groupoids. 
Moreover, when $\Gamma$ is abelian the groupoid $G_{\mathrm{HLS}}$ is an abelian group bundle, and hence its $C^*$-algebra is commutative.

This shows in particular that a groupoid $C^*$-algebra may admit a realization as a crossed product by a group action on a commutative $C^*$-algebra, even though the underlying groupoid itself does not arise as a transformation groupoid of any partial action of a discrete group.

\begin{proof}[Proof of \autoref{thm:HLS action-no-Delta-uniform}]
	Throughout the proof of \autoref{thm:HLS action-no-Delta-uniform}, we use the notation $A^{\times 2}\coloneqq A\times A$ to abbreviate the product of a set $A$ with itself. We moreover abbreviate the fibers of the anchor map $\rho$ as $Y_k\coloneqq \rho^{-1}(k)$ for all $k \in \N\cup\{\infty\}$. 
	Let $H\subset (G\ltimes Y)^{\times 2}$ be an open subgroupoid containing the diagonal $\Delta_{G\ltimes Y}$. We show that $H$ does not satisfy the conditions of \autoref{def:Delta}. 

	Let $S=S\cup S^{-1}\cup \{1\}$ be a finite generating set for $\Lambda$. 
	Since $H$ is open and contains the diagonal, we may find for every $z\in Y$ an open neighbourhood $z\in U\subset Y$ and $n\in \N$ such that
	\[\Big\{(k,\pi_k(\gamma),y)\,\Big\vert\, y\in U\cap Y_k,\, n\leq k \leq \infty\Big\}^{\times 2}\subset H,\]
	for all $\gamma\in S$. 
	Using compactness of $Y_\infty$, we may thus find an open cover ${Y_\infty\subset U_0\cup\dotsb\cup U_N\subset Y}$ and $n_0,\dotsc,n_N\in \N$ such that 
	\begin{equation}\label{eq:U_i}
		\Big\{(k,\pi_k(\gamma),y)\,\Big\vert\, y\in U_i\cap Y_k,\, n_i\leq k \leq \infty\Big\}^{\times 2}\subset H,
	\end{equation}
	for all $\gamma\in S$ and $i=0,\dotsc,N$. 
	Without loss of generality, we may assume that $y_0\in U_0$. 
	Note that 
	\[U\coloneqq \left(\bigcup_{n\in \N}\Delta_{Y_n}\right)\cup\left(\bigcup_{i=0}^NU_i\times U_i\right)\subset Y\times Y\]
	is an open neighbourhood of the diagonal $\Delta_Y$. By the equicontinuity assumption, we may find an open neighbourhood $y_0\in V\subset U_0$ such that 
	\begin{equation}\label{eq:UV}
		\bigcup_{\gamma\in \Lambda}\gamma(\{y_0\}\times V)\subset U.
	\end{equation}
	Since $\rho$ is open, we may find a point $y_n\in V\cap Y_n$ with $\max\{n_0,\dotsc,n_N\}\leq n<\infty$.

	We claim that for every $\gamma\in \Lambda$, we have 
	\begin{equation}\label{eq:infinitediscrete}
		\big\{((n,\pi_n(\gamma),y_n),(\infty,\gamma,y_0))\,\big\vert\, \gamma \in \Lambda\big\}\subset H.
	\end{equation}
	To see this, write $\gamma=\gamma_1\cdot \dotsb \gamma_l$ for $\gamma_1,\dotsc,\gamma_l\in S$. 
	It follows from $y_0,y_n\in V\subset U_0$ together with \eqref{eq:UV} that for each $j=1,\dotsc,l$, there is an $i(j)=0,\dotsc,N$ such that $\gamma_{j+1} \dotsb \gamma_l y_0,\gamma_{j+1} \dotsb \gamma_l y_n\in U_{i(j)}$. 
	Using this, \eqref{eq:U_i} implies that 
	\[((n,\pi_n(\gamma_j),\gamma_{j+1} \dotsb \gamma_l y_n),(\infty,\gamma_j,\gamma_{j+1} \dotsb \gamma_ly_0))\in H,\]
	for all $j=1,\dotsc, l$. 
	Since $H$ is a subgroupoid, this proves \eqref{eq:infinitediscrete}.

	Define $C\coloneqq \Gamma_n\ltimes Y_n\subset G\ltimes Y$ which is a compact set. 
	By \eqref{eq:infinitediscrete}, the projection of the set $(C\times (G\ltimes Y))\cap H$ onto the second factor contains the infinite set $\{\infty\}\times \Lambda\times \{y_0\}\subset G\ltimes Y$. 
	In particular, $(C\times (G\ltimes Y))\cap H$ does not have uniformly finite fibers.
\end{proof}

The following proposition and example show that the assumption of $\Gamma$ containing a finitely generated infinite subgroup is not redundant. 
\begin{prop}
	Let $(\Gamma,(N_k)_{k\in \N})$ be an approximated group with associated HLS groupoid $G=G_{\mathrm{HLS}}(\Gamma,(N_k)_{k\in \N})$ such that $\Gamma$ is locally finite and countable. Let $G\curvearrowright Y$ be an action on a compact space. Then $G\ltimes Y$ has property $\bar\Delta$. 
\end{prop}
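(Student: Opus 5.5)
The plan is to build the subgroupoid $H\subset (G\ltimes Y)^{\times 2}$ by hand, using the local finiteness of $\Gamma$ to make a group element determine a unique arrow once the level is large enough. Since $\Gamma$ is countable and locally finite, I write $\Gamma=\bigcup_{m\in\N}\Lambda_m$ with $\Lambda_0\subset\Lambda_1\subset\dotsb$ finite subgroups. Because $\bigcap_k N_k=\{1\}$ and $\Lambda_m$ is finite, I choose an increasing sequence $K_0\le K_1\le\dotsb$ in $\N$ such that $\pi_k|_{\Lambda_m}$ is injective for all $K_m\le k\le\infty$. For $\gamma\in\Gamma$ I set $m(\gamma)\coloneqq\min\{m\mid\gamma\in\Lambda_m\}$ and $K(\gamma)\coloneqq K_{m(\gamma)}$. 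Then $m(\gamma\delta)\le\max\{m(\gamma),m(\delta)\}$ and $m(\gamma^{-1})=m(\gamma)$, so $K(\gamma\delta)\le\max\{K(\gamma),K(\delta)\}$. Writing arrows of $G\ltimes Y$ as $(k,a,y)$ with $a\in\Gamma_k$ and $y\in Y_k$, I define
\[
H\coloneqq\Delta_{G\ltimes Y}\cup\bigl\{((k,\pi_k(\gamma),y),(k',\pi_{k'}(\gamma),y'))\,\bigm\vert\,\gamma\in\Gamma,\ k,k'\ge K(\gamma)\bigr\}.
\]

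Next I check that $H$ is an open subgroupoid containing the diagonal.

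\textbf{Closure under products.} If two non-diagonal pairs are composable, with labels $\gamma$ and $\delta$ at levels $k\ge\max\{K(\gamma),K(\delta)\}$ (and likewise for $k'$), then their product carries the label $\gamma\delta$, and $k,k'\ge K(\gamma\delta)$ by the inequality above.

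\textbf{Products with diagonal pairs.} Suppose $(g,g)$ is composed with a pair $(h,h')$. Then $r(h)=r(h')=s(g)$ is a single unit, so $h$ and $h'$ have the same level $k$ and the same range point. Since $h=(k,\pi_k(\gamma),y)$ and $h'=(k,\pi_k(\gamma),y')$ carry the same group component, and the action is by left multiplication, we get $y=y'$. Hence $h=h'$ and the product is again diagonal. Inverses are handled in the same way.

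\textbf{Openness.} For fixed $\gamma$, the set $\{(k,\pi_k(\gamma),y)\mid k\ge K(\gamma)\}$ is the preimage in $G\ltimes Y$ of the open set $N(K(\gamma),\gamma)$ from \eqref{Nngamma}, so it is open. The non-diagonal part of $H$ is therefore a union of products of open sets. The diagonal itself is covered as well: any arrow $(k,a,y)$ with $k$ finite lies in an open singleton-level piece, and an arrow at level $\infty$ has the form $(\infty,\gamma,y)$, whose pair $((\infty,\gamma,y),(\infty,\gamma,y))$ lies in the open non-diagonal part because $\infty\ge K(\gamma)$.

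For the compactness conditions, let $L\subset Y$ and $C\subset(G\ltimes Y)(L)$ be compact. Then $C$ is covered by finitely many finite levels together with finitely many sets $N(n_j,\delta_j)\times Y$. Take $M$ so large that all $\delta_j$ lie in $\Lambda_M$. Consider a pair $(g,c)\in H$ with $c\in C$ at level $k'$ and label $\gamma$, so $\pi_{k'}(\gamma)=\pi_{k'}(\delta_j)$. If $k'\ge K_{\max(m(\gamma),M)}$, injectivity gives $\gamma=\delta_j$. If instead $k'$ is below that threshold, then $k'\ge K(\gamma)$ still bounds $m(\gamma)$, leaving only finitely many possible $\gamma$. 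Thus only finitely many labels occur. The set $H\cap(G(L)\times C)$ is then a finite union of closed pieces of products of compact sets with the compact sets $\{(k,\pi_k(\gamma),y)\mid K(\gamma)\le k\le\infty,\ y\in L\}$, hence compact. The other intersection $H\cap(C\times G(L))$ is symmetric.

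The step I expect to be the main obstacle is closedness of $H$, which property $\bar\Delta$ requires. One must rule out a limit of pairs $((k_j,\pi_{k_j}(\gamma_j),y_j),(k'_j,\pi_{k'_j}(\gamma_j),y'_j))$ with $m(\gamma_j)\to\infty$ converging to a pair at level $(\infty,\infty)$ with two different group elements. My first attempt is to observe that convergence of the first coordinates to $(\infty,a,y)$ forces $\pi_{k_j}(\gamma_j)=\pi_{k_j}(a)$ eventually. Combining this with $k_j\ge K(\gamma_j)$ and the injectivity of $\pi_{k_j}$ on $\Lambda_{\max(m(\gamma_j),m(a))}$, I hope to derive $\gamma_j=a$ eventually, and likewise $\gamma_j=a'$, giving $a=a'$. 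This needs $K_m$ chosen compatibly, e.g.\ with $\pi_k$ injective on $\Lambda_m\Lambda_m$ for $k\ge K_m$. If the argument only works in this strengthened form, I would impose that choice from the start. Limits at mixed finite and infinite levels are handled by the discreteness of the finite levels.
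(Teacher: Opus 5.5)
\textbf{There is a genuine gap in the openness step.} You justify the finite-level part of the diagonal by saying that ``any arrow $(k,a,y)$ with $k$ finite lies in an open singleton-level piece''. That only shows that $\{(k,a)\}\times Y_k$ is open in $G\ltimes Y$. The diagonal of that set is open in $(G\ltimes Y)^{\times 2}$ only when $Y_k$ is discrete. This holds for $Y=\N\cup\{\infty\}$ or $Y=\widehat X$, but not for an arbitrary compact $Y$.

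Hence a diagonal point $((k,a,y),(k,a,y))$ with $k<\infty$ is an interior point of your $H$ only if it lies in the labelled part, i.e.\ only if $a=\pi_k(\gamma)$ for some $\gamma$ with $K(\gamma)\le k$. These $\gamma$ form the subgroup $\Lambda_m$ with $m$ maximal such that $K_m\le k$, and $\pi_k$ is injective on it. So they cover $\Gamma_k$ only if $\Lambda_m$ is a complement of $N_k$, and such complements need not exist.

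For example, let $\Gamma=\bigoplus_{i\ge0}\Z/4\Z$, $N_{2i}=\{x\mid x_l=0\text{ for }l<i\}$ and $N_{2i+1}=\{x\in N_{2i}\mid x_i\in 2\Z/4\Z\}$. If $\pi_1(\gamma)\neq 0$, then $\gamma_0$ is odd, so $0\neq 2\gamma\in N_1$. Thus $\pi_1$ is injective on no subgroup containing $\gamma$, and $K(\gamma)>1$ for every such $\gamma$, whatever $(\Lambda_m)$ and $(K_m)$ you choose. Now take $Y=(\N\cup\{\infty\})\times[0,1]$ with anchor the first projection and trivial fibrewise action. Every neighbourhood of $((1,\bar 1,t),(1,\bar 1,t))$ contains pairs $((1,\bar 1,t),(1,\bar 1,t'))$ with $t'\neq t$, and these are not in $H$. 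So your $H$ is not open, and neither $\Delta$ nor $\bar\Delta$ follows.

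\textbf{The repair.} Replace $\Delta_{G\ltimes Y}$ by $P\coloneqq\bigcup_{k\in\N}\bigcup_{a\in\Gamma_k}(\{(k,a)\}\times Y_k)^{\times 2}$, the set of pairs at a common finite level with equal group components. The rest of your argument then goes through:
\begin{enumerate}
\item $P$ is open and closed under products and inverses. Any labelled pair composable with a pair in $P$ sits at levels $(k,k)$ with equal components, so it already lies in $P$. Hence $P$ together with your labelled part is an open subgroupoid containing the diagonal.
\item Closedness persists. A net in $P$ converging to $((\infty,a,y),(\infty,a',y'))$ eventually has $\pi_{k_j}(a)=\pi_{k_j}(a')$ with $k_j\to\infty$, so $a=a'$ and the limit is labelled by $a$.
\item In the compactness step, a pair $(g,c)\in P$ with $c\in N(n_j,\delta_j)\times Y$ has $g\in N(n_j,\delta_j)\times Y$ as well.
\end{enumerate}
This is exactly what the normalisation $\pi_n(F_n)=\Gamma_n$ achieves in the paper: its $H=\bigcup_n\bigcup_{\gamma\in F_n}(N(n,\gamma)\times Y)^{\times 2}$ contains $P$.

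Do keep your injectivity condition on the $K_m$. It is what determines the label of $c\in C$ in your compactness step, and surjectivity alone does not do this. In the example above, surjectivity forces some $0\neq\epsilon_n\in F_n\cap N_n$ for every odd $n$. Then $(n,0,y)\in N(1,0)\times Y$ is paired with $(\infty,\epsilon_n,z)$ for infinitely many distinct $\epsilon_n$.

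Finally, the closedness step you expected to be the main obstacle is easy. From $\pi_{k_j}(\gamma_j)=\pi_{k_j}(a)$ and $\pi_{k'_j}(\gamma_j)=\pi_{k'_j}(a')$ you get $a^{-1}a'\in N_{\min(k_j,k'_j)}$, hence $a=a'$. Since $\Lambda_m$ is a subgroup, your own injectivity argument also needs no strengthening to $\Lambda_m\Lambda_m$.
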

\begin{proof}%
	As above, we use the notation $A^{\times 2}\coloneqq A\times A$ to abbreviate the product of a set $A$ with itself. 
	Let 
	\[F_1\subset F_2\subset \dotsb\subset \Gamma\]
	be an increasing sequence of finite subgroups such that $F_\infty\coloneqq \bigcup_{n\in \N}F_n=\Gamma$.
	By possibly enlarging the subgroups, we can without loss of generality assume that 
	\begin{equation}\label{pinsurjective}
		\pi_n(F_n)=\Gamma_n,\quad \text{ for all } n\in \N,
	\end{equation}
	where $\pi_n\colon \Gamma\to \Gamma_n\coloneqq \Gamma/N_n$ denotes the quotient map.
	We define 
	\[H\coloneqq (G\ltimes Y)^{\times 2}\cap \left(\bigcup_{n\in \N}\bigcup_{\gamma\in F_n}(N(n,\gamma)\times Y)^{\times 2}\right),\]
	where $N(n,\gamma)$ is defined as in \eqref{Nngamma}.
	Then $H$ is open since each $N(n,\gamma)$ is open. 
	To prove that $H$ is closed, it suffices to prove that $H\cap K$ is closed for any compact subset $K\subset (G\ltimes Y)^{\times 2}$ by \cite[Proposition~1.7]{Strickland2009}. Without loss of generality,
	there is an integer $n\in \N$ such that $K=(G\ltimes Y)^{\times 2}\cap  \left(\bigcup_{\gamma\in F_n}N(1,\gamma)\right)^{\times 2}$. 
	Then we have 
	\[H\cap K=(G\ltimes Y)^{\times 2}\cap \bigcup_{k=1}^n\bigcup_{\gamma \in F_k}(N(k,\gamma)\times Y)^{\times 2},\]
	which is compact and thus closed. This proves that $H$ is closed. 

	We show that $H\subset (G\ltimes Y)^{\times 2}$ is a subgroupoid.
	Let ${(\gamma',\gamma y,\eta',\eta z),(\gamma,y,\eta,z)\in H}$ with 
	$r(\gamma',\eta')=r(\gamma,\eta)=(n,m)\in (\N\cup\{\infty\})^{\times 2}$. We may assume $m\geq n$.
	If $n\in \N$, then we must have $\gamma,\eta\in N(n,\xi)$ and $\gamma',\eta'\in N(n,\xi')$ for some $\xi,\xi'\in F_n\subset F_m$. 
	In particular, 
	\[(\gamma',\gamma y,\eta',\eta z),(\gamma,y,\eta,z)=(\gamma'\gamma,y,\eta'\eta,z)\in (N(n,\xi'\xi)\times Y)^{\times 2}.\]
	If $n=m=\infty$, there must be $k\in \N$ and $\gamma=\eta,\gamma'=\eta'\in F_k$. In particular,
	\[(\gamma',\gamma y,\eta',\eta z),(\gamma,y,\eta,z)=(\gamma'\gamma,y,\gamma'\gamma,z)\in (N(k,\gamma'\gamma)\times Y)^{\times 2},\]
	proving that $H$ is a subgroupoid.

	Moreover, $H$ contains the diagonal $\Delta_{G\ltimes Y}$ by \eqref{pinsurjective}.
	Let $C\subset G\ltimes Y$ be a compact subset. 
	Without loss of generality, we can assume  
	\[C\subset (G\ltimes Y)\cap \left(\bigcup_{\gamma\in F_n}N(1,\gamma)\times Y\right)\]
	for some $n\in \N$. Fix $m\geq n$ such that $\pi_m\colon \Gamma\to \Gamma_m$ is injective on $F_n\subset \Gamma$. 
	Then,
	\begin{align*}
		H\cap(C\times (G\ltimes Y))\subset&  (G\ltimes Y)^{\times 2}\cap\left(\bigcup_{k=1}^m\bigcup_{\gamma\in F_k}(N(k,\gamma)\times Y)^{\times 2}\dotsb \right.\\
		&\left.\dotsb\cup \bigcup_{\gamma\in F_n}\underbrace{\Big\{(\pi_i(\gamma),y,\pi_j(\gamma),z)\,\Big\vert\, i,j\geq m, y,z\in Y\Big\}}_{\cong ((\N\cup\{\infty\})\times Y)^{\times 2}}\right)
	\end{align*}
	is compact in $(G\ltimes Y)^{\times 2}$. 
	Analogously, $H\cap ((G\ltimes Y)\cap C)\subset (G\ltimes Y)^{\times 2}$ is compact. 
\end{proof}

\begin{exa}
Let $\Gamma=\bigoplus_{i=0}^\infty \Z/2\Z$ and $N_k=\bigoplus_{i=k}^\infty\Z/2\Z$ for all $k\in \N$. 
Then $G_{\mathrm{HLS}}(\Gamma,(N_k))$ is isomorphic to the transformation groupoid $\Gamma\ltimes (\N\cup\{\infty\})$ for the trivial partial action with domains 
\[D_\gamma =\{n\in \N\cup\{\infty\}\mid n> \max\{k\in \N \mid \gamma_k\not=0\}\},\quad \gamma\in \Gamma\setminus \{1\}.\]
\end{exa}

\section{Deaconu--Renault groupoids}\label{sec:dr-groupoids}
In this section, we use the criterion of Castro--Kang from \autoref{thm:deCastroKang} in terms of pure cocycles in order to prove that many Deaconu--Renault groupoids, including those associated to non-trivial self coverings of connected spaces, do not arise as transformation groupoids associated to partial actions of discrete groups. 
We illustrate that property $\Delta$ is not the only obstruction to being a transformation groupoid by proving $\bar\Delta$ (see \autoref{def:Delta}) for all Deaconu--Renault groupoids.
We moreover prove that path groupoids associated to many higher rank graphs do arise as transformation groupoids -- providing a sharp distinction between the connected and totally disconnected setting. 
We introduce Deaconu--Renault groupoids in the generality of $\N^k$-actions in order to treat higher rank graphs.
Deaconu--Renault groupoids were introduced in \cite{Renault1980,Deaconu1995,AnantharamanDelaroche1997}.

\begin{defi}
Let $k\in \N$ and let $X$ be a locally compact Hausdorff space. An \emph{$\N^k$-action} $\sigma\colon \N^k\curvearrowright X$ is a monoid homomorphism $\N^k\ni k\mapsto \sigma^k\in \mathrm{LHomeo}(X)$ from $\N^k$ into the monoid of local homeomorphisms $X\to X$. 
The \emph{Deaconu--Renault groupoid} $G_\sigma$ associated to $\sigma$ is given by 
\[G_\sigma\coloneqq \big \{(x,n,y)\in X\times \Z^k \times X\,\big\vert\,\exists m,l\in \N^k,\, \sigma^mx=\sigma^ly, \, n=m-l\big\},\]
with unit space $G_\sigma^{(0)}=X$ and 
\[r(x,n,y)= x,\qquad s(x,n,y)=y, \qquad (x,n,y)(y,m,z)=(x,n+m,z),\]
for all $(x,n,y),(y,m,z)\in G_\sigma$. 
\end{defi}
\begin{exa}
	The following Hausdorff--\'etale groupoids can be realized as Deaconu--Renault groupoids:
	\begin{enumerate}
		\item Transformation groupoids associated to (global) $\Z^k$-actions;
		\item Path groupoids associated to higher rank graphs \cite{Kumjian2000};
		\item Transformation groupoids of semi-saturated orthogonal partial actions of free groups \cite{Steinberg2026}.
	\end{enumerate}
\end{exa}
\begin{rem}\label{rem:k-graph}
	It is shown in \cite[Theorem 2.2]{Steinberg2026} that Deaconu--Renault groupoids associated to $\N$-actions on totally disconnected compact Hausdorff spaces are transformation groupoids of partial actions by free groups.\footnote{Note that in our definition of Deaconu--Renault groupoids, the local homeomorphisms are globally defined so that the extra assumption in \cite[Theorem 2.2]{Steinberg2026} is automatically satisfied. }  We do not know whether the same holds true for $\N^k$-actions. We thank Ruy Exel for bringing this question to our attention. 
	The following proposition provides some evidence for a positive answer. %
\end{rem}

\begin{prop}
	Let $k\in \N$ and let $\sigma\colon \N^k\curvearrowright X$ be an action on a locally compact Hausdorff space. 
	Then the associated Deaconu--Renault groupoid $G_\sigma$ has property $\bar\Delta$. 
\end{prop}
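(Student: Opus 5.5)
The natural choice for $H$ mimics the transformation groupoid case, using the canonical cocycle $c\colon G_\sigma\to\Z^k$, $c(x,n,y)=n$. This is a continuous groupoid homomorphism; it is not pure, but it is a \emph{locally proper}-type cocycle in a weaker sense. I would set
\[
H\coloneqq \{((x,n,y),(x',n',y'))\in G_\sigma\times G_\sigma\mid n=n'\}.
\]
Since $c$ is continuous into a discrete group and a homomorphism, $H$ is a clopen subgroupoid of $G_\sigma\times G_\sigma$ containing the diagonal. It then remains to show that for compact $K\subset X$ and compact $C\subset G_\sigma(K)$, the sets $H\cap(G_\sigma(K)\times C)$ and $H\cap (C\times G_\sigma(K))$ are compact. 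Compactness gives uniformly finite fibers, because compact subsets of an étale groupoid meet each source and range fiber in a uniformly bounded number of points: they are covered by finitely many bisections.

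For the compactness, I would first reduce to the pieces of the topology. Recall that $G_\sigma$ has a basis of sets
\[
Z(U,m,l,V)=\{(x,m-l,y)\mid x\in U,\ y\in V,\ \sigma^m x=\sigma^l y\},
\]
with $\sigma^m|_U,\sigma^l|_V$ injective. For fixed $(m,l)$ the set
\[
G_{m,l}\coloneqq\{(x,m-l,y)\mid \sigma^mx=\sigma^ly\}
\]
is open and carries the subspace topology from $X\times X$. In that topology it is closed in $X\times X$, since $\sigma^m,\sigma^l$ are continuous and $X$ is Hausdorff. The compact set $C$ is covered by finitely many such open sets, so $C\subset \bigcup_{(m,l)\in F}G_{m,l}$ for a finite $F\subset\N^k\times\N^k$, and $c(C)$ is finite.

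Now take $((x,n,y),(x',n,y'))$ in the intersection with $(x',n,y')\in C$ and $x,y\in K$. The element $(x',n,y')$ lies in some $G_{m,l}$ with $(m,l)\in F$ and $n=m-l$. The element $(x,n,y)$ has the same degree $n$ but a possibly different lag: $\sigma^{m+p}x=\sigma^{l+p}y$ for some $p\in\N^k$. \textbf{This is the main obstacle.} The set $\{(x,n,y)\mid x,y\in K\}$ is a union over $p$ of the closed sets $G_{m+p,l+p}\cap (K\times K)$, an increasing union that may not stabilize. The $c$-preimage alone is therefore not locally proper, and the naive $H$ likely fails compactness.

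To fix this, I would shrink $H$ by also tracking the lag. Fix the normal form with $m\wedge l=0$, i.e. $m=n_+$ and $l=n_-$. Define $H$ to consist of pairs $((x,n,y),(x',n,y'))$ for which there exists $p\in\N^k$ with
\[
\sigma^{n_++p}x=\sigma^{n_-+p}y\quad\text{and}\quad \sigma^{n_++p}x'=\sigma^{n_-+p}y'.
\]
I would rather take $H$ to be the union over $p$ of $G_{n_++p,n_-+p}\times G_{n_++p,n_-+p}$, where each such set is open. The diagonal is contained in $H$. Closure under products holds by adding lags: if both pairs satisfy lag $p$ and $q$ respectively, then their product satisfies lag $p\vee q$ plus the appropriate shift, using that $\sigma$ is a monoid action. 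Inverses are clear.

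The cost is that closedness and compactness must be rechecked. Given $(x',n,y')\in C$, choose the minimal lag $p$ available on the piece $C\cap G_{m,l}$; by finiteness of $F$ this lag is bounded by some $p_0$. Then a partner $(x,n,y)$ with $x,y\in K$ need only satisfy $\sigma^{n_++q}x=\sigma^{n_-+q}y$ for some $q\le$ a common bound? This is exactly the delicate point. I would try replacing the existential over $p$ by the \emph{minimal} lag $\ell(g)$ of each element and requiring $\ell(g)=\ell(g')$. This makes $H$ a union of sets $\{\ell=p\}^{\times2}$ that are locally closed, and compactness then follows since $\ell$ is bounded on $C$ and each $\{(x,y)\in K\times K:\ell=p\}$ sits in the compact set $G_{n_++p,n_-+p}\cap K^2$. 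I would then invoke the argument of \cite[Proposition~5.17]{AnantharamanDelaroche} to handle the remaining closedness and openness details. I expect verifying openness and multiplicativity for this minimal-lag version to be the hardest part.
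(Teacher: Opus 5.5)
Your diagnosis of the first $H$ is correct. It is exactly the $H$ the paper uses, and it is also exactly where the paper's proof breaks. The paper concludes compactness from $H\cap(G_\sigma(K)\times C)\subset K\times F\times K\times K\times F\times K$. That step uses the product topology of $X\times\Z^k\times X$, which is strictly coarser than the groupoid topology. For the full shift on $\{0,1\}^{\N}$ with $K=C=X$, one gets $H\cap(G_\sigma\times X)=c^{-1}(0)\times X$. Here $c^{-1}(0)$ is a strictly increasing union of the open sets $\{(x,0,y)\mid \sigma^m x=\sigma^m y\}$, so this set is not compact and has infinite fibres.

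However, neither of your repairs closes the gap.
\begin{itemize}
\item The common-lag $H$ is an open subgroupoid. But a unit lies in $G_{p,p}$ for every $p$, so it contains the same set $c^{-1}(0)\times X$.
\item The minimal-lag $H$ is not well defined for $k\geq 2$: the admissible lags form an up-set of $\N^k$, which can have several minimal elements.
\item For $k=1$ the minimal-lag $H$ is not closed under multiplication. Work in the full shift over $\{0,1,2\}$ with $w$ any infinite word. The pair $((1w,0,0w),(1w,0,2w))$ lies in $H$, since both entries have minimal lag $1$. Multiply it on the left by the diagonal element $(g,g)$ with $g=(0w,0,1w)$. The result is $((0w,0,0w),(0w,0,2w))$, whose entries have minimal lags $0$ and $1$, so it is not in $H$.
\item Citing \cite[Proposition~5.17]{AnantharamanDelaroche} cannot supply the missing verifications. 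That argument runs through a locally proper cocycle coming from a partial action, which is precisely what is absent here.
\end{itemize}

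More seriously, no choice of $H$ can work, because the statement is false in this generality. Take $\sigma(z)=z^2$ on $X=S^1$, and the compact open bisection $A=\{(x,1,x^2)\mid x\in S^1\}\cong S^1$. Let $H$ be any closed and open subgroupoid of $G_\sigma\times G_\sigma$ containing the diagonal.
\begin{itemize}
\item $H\cap(A\times A)$ is a nonempty clopen subset of the connected space $A\times A$, so $A\times A\subset H$.
\item Taking products gives $((x,n,x^{2^n}),(x',n,x'^{2^n}))\in H$ for all $x,x'\in S^1$ and $n\in\N$.
\item Multiplying by inverses of such pairs gives $((x,0,x),(x',0,y'))\in H$ whenever $x'^{2^n}=y'^{2^n}$.
\item Hence, with $K=C=X$, the set $H\cap(C\times G_\sigma)$ contains $X\times c^{-1}(0)$. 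This set is closed and non-compact, and it has infinite fibres.
\end{itemize}
So $G_\sigma$ does not have property $\bar\Delta$, and the $\bar\Delta$ claim in \autoref{exa-doublingmap} fails as well. The argument uses that $H$ is closed, so it says nothing about property $\Delta$. What you would need is not a cleverer $H$ but an extra hypothesis, for instance a locally proper pure cocycle as in \autoref{prop-k-graph}.
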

\begin{proof}
	We define 
	\[H\coloneqq \{ ((x,n,y),(x',n,y'))\mid (x,n,y),(x',n,y')\in G_\sigma\}\subset G_\sigma\times G_\sigma.\]
	It is easy to see that $H$ is a clopen subgroupoid. 
	Let $K\subset X$ and $C\subset G_\sigma(K)$ be compact subsets. 
	Then there is a finite subset $F\subset \Z^k$ such that $C\subset K\times F \times K$.
	In particular, $H\cap (G_\sigma(K)\times C)$ is contained in $K\times F \times K\times K \times F\times K$ which is compact. 
	Thus, $H\cap (G_\sigma(K)\times C)$ is compact. 
	Analogously, $H\cap (C\times G_\sigma(K))$ is compact. 
\end{proof}

More evidence that all Deaconu--Renault groupoids with totally disconnected unit space might arise from partial actions is given by path groupoids associated to higher rank graphs which embed into their fundamental groupoids \cite{Kumjian2000,Pask2004}. In this case, we can answer Exel's question affirmatively.  We recall the basic definitions from \cite{Kumjian2000}: 

A \emph{$k$-graph} for $k\geq 1$ is a small category $\Lambda$ together with a functor $d\colon \Lambda \to \N^k$, such that the following \emph{factorization property} holds: For all morphisms $\lambda\in \Lambda$ and factorizations $d(\lambda)=n+m$ with $n,m\in \N^k$, there are unique composable morphisms $\mu,\eta\in \Lambda$ with $n=d(\mu)$, $m=d(\eta)$ and $\lambda = \mu\eta$.
Here, we adopt the conventions for groupoids from \autoref{sec:property-delta} and view $\Lambda$ as a set of morphisms together with range, source and multiplication maps and $\N^k$ as a category with one object and $\N^k$ as endomorphisms. We again write the composition from right to left so that $\lambda\mu$ is defined whenever $s(\lambda)=r(\mu)$. 

We say that $\Lambda$ is \emph{row-finite} if for every $v\in \Lambda^{(0)}$ and $n\in \N^k$, the set
\[
v\Lambda^n \coloneqq \{\lambda\in \Lambda \mid r(\lambda)=v,\ d(\lambda)=n\}
\]
is finite, and that $\Lambda$ has \emph{no sources} (or is \emph{source-free}) if $v\Lambda^n\not=\emptyset$ for all $v\in \Lambda^{(0)}$ and $n\in \N^k$. We shall assume throughout that all $k$-graphs are row-finite and source-free.

We denote by $\Omega_k$ the category associated to the opposite of the partially ordered set $\N^k$. Its objects are given by elements of $\N^k$ and its morphisms from $m$ to $n$ are given by 
\[
\Omega_k(n,m)=\begin{cases}
	\{(n,m)\},&n\leq m,\\
	\emptyset,&\text{else}.
\end{cases}
\]
The \emph{path space} $\Lambda^\infty$ of $\Lambda$ is the space of all functors $\lambda\colon \Omega_k\to \Lambda$, whose topology is generated by the basic compact open sets
\[
Z(\lambda)\coloneqq \{x\in \Lambda^\infty\mid x(0,d(\lambda))=\lambda\},\qquad \lambda\in \Lambda,
\]
see \cite[Lemma~2.6]{Kumjian2000}. For $p\in \N^k$, we denote by $\sigma^p\colon \Lambda^\infty\to \Lambda^\infty$ the shift map given by $\sigma^p(x)(n,m)=x(n+p,m+p)$. 

The \emph{path groupoid} $G_\Lambda\coloneqq G_\sigma$ associated to $\Lambda$ is the Deaconu--Renault groupoid associated to the action $\sigma\colon \N^k\curvearrowright \Lambda^\infty$. 

Following \cite{Pask2004}, the \emph{fundamental groupoid} $\Pi(\Lambda)$ of $\Lambda$ is the unique (up to isomorphism) small groupoid together with a functor $i_\Lambda\colon \Lambda \to \Pi(\Lambda)$ such that $i_\Lambda$ is bijective on objects and such that every functor from $\Lambda$ into a discrete groupoid uniquely factors through $i_\Lambda$. Concretely, one may define $\Pi(\Lambda)$ as the topological fundamental groupoid $\Pi_1(B\Lambda)$ of the classifying space of $\Lambda$. The functor $i_\Lambda$ need not be injective in general \cite{Pask2004}, but when it \emph{is}, we say that \emph{$\Lambda$ embeds into its fundamental groupoid}. 

The following result is inspired by Xin Li's presentation of graph groupoids in terms of partial actions by free groups \cite{Li2017}.

\begin{prop}\label{prop-k-graph}
Let $\Lambda$ be a row-finite, source-free higher rank graph which embeds into its fundamental groupoid. Then there exists a locally proper pure cocycle $c\colon G_\Lambda \to \Gamma$ into a discrete group. In particular, $G_\Lambda$ is a transformation groupoid by a partial action with closed graphs. 
\end{prop}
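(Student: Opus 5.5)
We construct the cocycle from the fundamental group of $\Pi(\Lambda)$, following Li's construction for graphs. Since the lemma only concerns the component structure, we may assume $\Lambda$ is connected; otherwise we work componentwise and take a free product of the groups, or equivalently use the fundamental group of the connected groupoid obtained by adding free arrows between components. Fix a base vertex $v_0$ and a maximal tree, that is, a choice of arrows $t_v\in \Pi(\Lambda)$ from $v_0$ to $v$ (meaning $r(t_v)=v$ and $s(t_v)=v_0$). This gives a groupoid homomorphism
\[
\phi\colon \Pi(\Lambda)\to \Gamma\coloneqq \Pi(\Lambda)^{v_0}_{v_0},\qquad g\mapsto t_{r(g)}^{-1}\,g\,t_{s(g)} .
\]
Composing with $i_\Lambda$ yields a functor $\Lambda\to\Gamma$. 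It is injective on each set of morphisms $v\Lambda w$ with fixed range and source, because $i_\Lambda$ is injective and $\phi$ is injective on each hom-set.

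For $(x,m-l,y)\in G_\Lambda$ with $\sigma^m x=\sigma^l y$, define
\[
c(x,m-l,y)\coloneqq \phi\bigl(i_\Lambda(x(0,m))\bigr)\,\phi\bigl(i_\Lambda(y(0,l))\bigr)^{-1}.
\]
\textbf{Well-definedness.} Any other representative $(m',l')$ with $m'-l'=m-l$ can be compared with $(m+p,l+p)$. Replacing $m,l$ by $m+p,l+p$ multiplies both factors on the right by the same element $\phi(i_\Lambda(z(0,p)))$, where $z=\sigma^m x$, by the factorization property and functoriality. Hence the value does not change, and $c$ is well defined.

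\textbf{Homomorphism and continuity.} The same common-extension trick shows that $c$ is multiplicative. Continuity holds because $c$ is constant on the basic open bisections $Z(\mu)\times_{\sigma}Z(\nu)=\{(\mu z,d(\mu)-d(\nu),\nu z)\}$, where it takes the value $\phi(\mu)\phi(\nu)^{-1}$.

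\textbf{Purity.} This is the main step. Suppose $c(x,m-l,y)=1$. Then $\phi(x(0,m))=\phi(y(0,l))$ in $\Gamma$. Since $\phi$ is injective on hom-sets of $\Pi(\Lambda)$, and both arrows have the same source, namely the vertex of $\sigma^m x=\sigma^l y$, they must have the same range $x(0)=y(0)$. Then $i_\Lambda(x(0,m))=i_\Lambda(y(0,l))$ in $\Pi(\Lambda)$, and the embedding assumption gives $x(0,m)=y(0,l)$ in $\Lambda$. Applying the degree functor gives $m=l$. Hence $x=x(0,m)\sigma^m x=y(0,l)\sigma^l y=y$ by unique factorization of infinite paths, and $\ker c=G_\Lambda^{(0)}$.

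The delicate point is that we need the degree to be recovered. This works because equality in $\Lambda$ forces equality of degrees, which is exactly where the hypothesis that $\Lambda$ embeds into its fundamental groupoid is used.

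\textbf{Local properness.} Let $K\subset\Lambda^\infty$ be compact and $\gamma\in\Gamma$. The fiber $c^{-1}(\gamma)\cap r^{-1}(K)$ is a bisection by the purity argument, since \autoref{thm:deCastroKang} shows preimages are bisections. So it suffices to show that $c^{-1}(\gamma)$ is closed and that its range restriction over $K$ is compact. Closedness follows because $c$ is continuous into a discrete group, so each $c^{-1}(\gamma)$ is clopen.

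For properness we need more. Given $x$, the arrow $g\in c^{-1}(\gamma)$ with $r(g)=x$ is determined by $x$. We want the degree $n=m-l$ to be bounded over $K$. We argue that $\gamma=\phi(\mu)\phi(\nu)^{-1}$ determines $d(\mu)-d(\nu)$, since the degree functor $d$ extends to $\Pi(\Lambda)\to\Z^k$ and hence to a homomorphism $\Gamma\to\Z^k$ (the tree arrows contribute by conjugation, which cancels in the abelian group $\Z^k$). So $c^{-1}(\gamma)\subset \Lambda^\infty\times\{n_\gamma\}\times\Lambda^\infty$.

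Local properness then reduces to the closedness of the graph $\{(x,y):(x,n_\gamma,y)\in c^{-1}(\gamma)\}$ in $\Lambda^\infty\times\Lambda^\infty$. We check this directly: a limit of pairs $(\mu_i z_i,\nu_i z_i)$ with $\phi(\mu_i)\phi(\nu_i)^{-1}=\gamma$ can be reduced to minimal representatives. Here $(\mu,\nu)$ is chosen with no common extension cancellation. Using injectivity of $\phi\circ i_\Lambda$ on hom-sets, there are only finitely many such pairs of bounded length. This finiteness is where we expect the technical work to lie, possibly by taking the meet of degrees.

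Finally, \autoref{thm:deCastroKang} then yields the transformation groupoid with closed graphs.
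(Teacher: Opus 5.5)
\textbf{The purity step has a genuine gap, and the cocycle you define is in fact not pure in general.}

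Your map $\phi(g)=t_{r(g)}^{-1}g\,t_{s(g)}$ is injective on each hom-set. However, it sends every arrow $t_ut_{u'}^{-1}$ to $1$. So from $\phi(g)=\phi(h)$ and $s(g)=s(h)$ you only get $gh^{-1}=t_{r(g)}t_{r(h)}^{-1}$, not $r(g)=r(h)$.

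Here is a concrete counterexample. Take the $1$-graph with vertices $u,u',w$, an edge $e$ from $w$ to $u$, an edge $e'$ from $w$ to $u'$, and a loop $f$ at $w$. It is row-finite and source-free, and it embeds into $\Pi(\Lambda)$.
\begin{itemize}
\item With $v_0=w$, $t_w=1_w$, $t_u=e$, $t_{u'}=e'$, you get $\phi(e)=\phi(e')=1$. Hence $c(ef^\infty,0,e'f^\infty)=1$, although $ef^\infty\neq e'f^\infty$.
\item No other choice of the $t_v$ helps. Every arrow from $u'$ to $u$ in $\Pi(\Lambda)$ is $ef^je'^{-1}$ for some $j\in\Z$. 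If $t_ut_{u'}^{-1}=ef^je'^{-1}$, then the non-unit $(ef^\infty,j,e'f^\infty)$ lies in $\ker c$.
\end{itemize}

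The missing idea is that the homomorphism out of $\Pi(\Lambda)$ must itself be pure, i.e.\ trivial only on units. So the target group has to separate vertices too. This is why the paper uses the universal group $F(\Lambda)$ (generators $[\lambda]$, relations $[\lambda][\mu]=[\lambda\mu]$) rather than a vertex group. Every functor from $\Lambda$ to a group factors through $F(\Lambda)$. \autoref{lem-groupoidfunctor}, which rests on de Castro--Kang's pure cocycle for discrete groupoids, then separates in $F(\Lambda)$ any two distinct arrows of $\Pi(\Lambda)$ with a common source, possibly with different ranges.

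You could equally repair your construction. Replace $\Gamma$ by the free product of $\Gamma$ with the free group on generators $a_v$ ($v$ a vertex), and use $g\mapsto a_{r(g)}\phi(g)a_{s(g)}^{-1}$. After that your purity argument works: the embedding gives $x(0,m)=y(0,l)$, then $m=l$ and $x=y$. Your well-definedness, multiplicativity and continuity checks agree with the paper's. Incidentally, your claim that $\gamma$ determines $n$ is false for your $\phi$. Writing $\tilde d\colon\Pi(\Lambda)\to\Z^k$ for the extension of $d$, one has $\tilde d(\phi(g))=\tilde d(g)+\tilde d(t_{s(g)})-\tilde d(t_{r(g)})$. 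The two tree terms sit at different vertices, so they do not cancel.

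\textbf{Your local properness paragraph is also not a proof.} The finiteness of minimal pairs is only announced, and injectivity on hom-sets gives no bound on the degrees of minimal representatives of a fixed $\gamma$.

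The paper does not count spans; once $c$ is pure it argues directly. Suppose $x'\in Z(\lambda)$, $y'\in Z(\mu)$ and $c(x',n,y')=[\lambda][\mu]^{-1}$. Then
\[
(x',d(\lambda),\sigma^{d(\lambda)}x')^{-1}\,(x',n,y')\,(y',d(\mu),\sigma^{d(\mu)}y')\in\ker c=G_\Lambda^{(0)},
\]
which forces $n=d(\lambda)-d(\mu)$ and $\sigma^{d(\lambda)}x'=\sigma^{d(\mu)}y'$. Hence
\[
(r\times c\times s)^{-1}\bigl(Z(\lambda)\times\{[\lambda][\mu]^{-1}\}\times Z(\mu)\bigr)=Z(\lambda,\mu),
\]
which is compact. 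This gives compact preimages near every point $(x,\gamma,y)$ with $\gamma=[\lambda][\mu]^{-1}$ for some prefix $\lambda$ of $x$ and prefix $\mu$ of $y$.

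The paper then simply asserts that every point has such a neighbourhood. For the remaining points one still has to exhibit a basic neighbourhood with compact (e.g.\ empty) preimage. That is exactly your closed-graph question, so in either write-up this step should be made explicit.
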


For the proof, we need the following lemma which is similar to \cite[Proposition~3.12]{Brownlowe2025a}.

\begin{lemma}\label{lem-groupoidfunctor}
Let $G$ be a discrete groupoid and $g,h\in G$ such that $s(g)=s(h)$ and $g\not=h$. Then there exists a homomorphism $c\colon G\to \Gamma$ into a discrete group satisfying $c(g)\not=c(h)$. 
\end{lemma}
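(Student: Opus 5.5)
The idea is to separate $g$ and $h$ with the vertex group of a single connected component, pushed into a group through a spanning tree.

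First reduce to the component. Let $u\coloneqq s(g)=s(h)$ and let $G_0\subset G$ be the connected component of $G$ containing $u$, i.e.\ the full subgroupoid on the units $v$ with $G^v_u\neq\emptyset$. Both $g$ and $h$ lie in $G_0$. A homomorphism on $G_0$ extends to all of $G$ by sending every arrow outside $G_0$ to $1$. This is still a homomorphism, because composable arrows always lie in the same component. So we may assume $G$ is connected.

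Next, the spanning-tree trick. For each unit $v\in G^{(0)}$ choose an arrow $t_v\in G$ with $s(t_v)=u$ and $r(v)=v$; that is, $t_v\colon u\to v$, with $t_u=u$. Put $\Gamma\coloneqq G_u^u$, the isotropy group at $u$, which is a discrete group. Define
\[
c\colon G\to \Gamma,\qquad c(k)\coloneqq t_{r(k)}^{-1}\, k\, t_{s(k)} .
\]
This is well defined, since $t_{s(k)}$ starts at $u$ and ends at $s(k)$, and $t_{r(k)}^{-1}$ starts at $r(k)$ and ends at $u$. To check that $c$ is a homomorphism, take $(k,l)\in G^{(2)}$. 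Then
\[
c(k)c(l)=t_{r(k)}^{-1}k\,t_{s(k)}t_{r(l)}^{-1}\,l\,t_{s(l)} .
\]
Since $s(k)=r(l)$, the middle factors cancel, leaving $t_{r(kl)}^{-1}kl\,t_{s(kl)}=c(kl)$.

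Finally, separate $g$ and $h$. Because $s(g)=s(h)=u$ and $t_u=u$, we get
\[
c(g)=t_{r(g)}^{-1}g,\qquad c(h)=t_{r(h)}^{-1}h .
\]
Suppose $c(g)=c(h)$. Then the arrows $t_{r(g)}^{-1}g$ and $t_{r(h)}^{-1}h$ are equal, so in particular their sources agree, which is automatic. Comparing the factors, the arrows $t_{r(g)}^{-1}$ and $t_{r(h)}^{-1}$ must have the same source, so $r(g)=r(h)$ and hence $t_{r(g)}=t_{r(h)}$. Cancelling this common factor gives $g=h$, a contradiction. Therefore $c(g)\neq c(h)$.

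The argument is elementary. The only delicate point is the reduction to a connected component together with the consistent choice of the arrows $t_v$, which needs the axiom of choice when $G$ is large.
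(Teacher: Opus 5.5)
\textbf{There is a genuine gap in the separation step.} Your reduction to a connected component is fine, and so is the homomorphism $c(k)=t_{r(k)}^{-1}k\,t_{s(k)}$ into $G_u^u$. The problem comes after that.

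From $t_{r(g)}^{-1}g=t_{r(h)}^{-1}h$ you cannot ``compare the factors''. Equal products in a groupoid need not have factors with equal sources: in a pair groupoid, $(x,y)(y,z)=(x,w)(w,z)$ for all $y,w$.

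Worse, the conclusion is false for your $c$. Since $c(t_v)=t_v^{-1}t_v t_u=u$ is the identity for every unit $v$, $c$ never separates the unit $u$ from a tree arrow $t_v$ with $v\neq u$, although both have source $u$. Concretely, take the pair groupoid on $\{u,v\}$ with $g=(u,u)$ and $h=(v,u)$. Then $G_u^u$ is trivial, so no choice of the $t_v$ can help.

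The isotropy group only handles the case $r(g)=r(h)$, where your cancellation of $t_{r(g)}^{-1}$ is legitimate. The case $r(g)\neq r(h)$ is not covered. It cannot be dismissed either: the application in the proof of \autoref{prop-k-graph} needs it, since there $x(0,m)$ and $y(0,m)$ are only known to share a source.

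\textbf{The missing ingredient is a homomorphism that remembers ranges.} If $a\coloneqq r(g)\neq r(h)$, then $k\mapsto 1_{\{a\}}(r(k))-1_{\{a\}}(s(k))$ is a homomorphism $G\to\Z$. Its values on $g$ and $h$ differ by $1$. Combined with your argument for $r(g)=r(h)$, this proves the lemma.

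Equivalently, on the component $G_0$ you can use the map $k\mapsto\bigl(t_{r(k)}^{-1}k\,t_{s(k)},\,x_{r(k)}x_{s(k)}^{-1}\bigr)$. It takes values in $G_u^u$ times the free group on free generators $x_v$ indexed by the units of $G_0$, and it is a pure cocycle on $G_0$. This is essentially what the paper imports from de Castro--Kang (Corollary~3.10). The paper then concludes directly: $c(g)=c(h)$ forces $c(gh^{-1})=1$, hence $gh^{-1}\in G^{(0)}$ and $g=h$.
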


\begin{proof}
By \cite[Corollary~3.10]{deCastroKang2024}, there is a pure cocycle $c\colon G\to \Gamma$ into a discrete group. 
Assume that $c(g)=c(h)$. Then we have
\[
c(gh^{-1})=c(g)c(h)^{-1}=1.
\]
By purity, we must have $gh^{-1}\in G^{(0)}$, so that $g=h$. This is a contradiction.
\end{proof}

\begin{proof}[Proof of \autoref{prop-k-graph}]
Following \cite[page~577]{Bridson1999}, we denote by $F(\Lambda)$ the group generated by symbols $[\lambda]$ for $\lambda\in\Lambda$ subject to the relations
\[
[\lambda][\mu]=[\lambda\mu]
\]
whenever $\lambda,\mu$ are composable. We define a cocycle
\[
c\colon G_\Lambda\to F(\Lambda),\qquad 
c(x,n,y)\coloneqq [x(0,m)][y(0,l)]^{-1},
\]
where $m,l\in \N^k$ are such that $m-l=n$ and $\sigma^m(x)=\sigma^l(y)$.

To see that $c$ is well-defined, let $(m,l)$ and $(m',l')$ be two such choices. Choose $r\in \N^k$ with $r\geq m,m'$. Then also $r-n\geq l,l'$, and both pairs can be enlarged to $(r,r-n)$, so it suffices to check invariance under replacing $(m,l)$ by $(m+p,l+p)$. Using the relations in $F(\Lambda)$ and $\sigma^m(x)=\sigma^l(y)$, we obtain
\begin{align*}
[x(0,m+p)][y(0,l+p)]^{-1}&=[x(0,m)][x(m,m+p)][y(l,l+p)]^{-1}[y(0,l)]^{-1}\\
&=[x(0,m)][y(0,l)]^{-1}.
\end{align*}
Thus $c$ is well-defined.

To see that $c$ is continuous, note that it is constant on sets of the form
\[
Z(\lambda,\mu)=\{(x,d(\lambda)-d(\mu),y)\mid x(0,d(\lambda))=\lambda,\ y(0,d(\mu))=\mu,\ \sigma^{d(\lambda)}(x)=\sigma^{d(\mu)}(y)\},
\]
which form a basis of compact open sets by \cite[Proposition~2.8]{Kumjian2000}.

To see that $c$ is a groupoid homomorphism, let $(x,n,y)(y,m,z)\in G_\Lambda$ and choose $l,p,q\in \N^k$ such that $n=l-p$, $m=p-q$, and $\sigma^l(x)=\sigma^p(y)=\sigma^q(z)$. Then
\begin{align*}
c(x,n,y)c(y,m,z)
&=[x(0,l)][y(0,p)]^{-1}[y(0,p)][z(0,q)]^{-1}\\
&=[x(0,l)][z(0,q)]^{-1}
=c(x,n+m,z).
\end{align*}

To see that $c$ is pure, suppose $c(x,n,y)=1$. Since the degree functor $d$ induces a homomorphism $\tilde d\colon F(\Lambda)\to \Z^k$ with $\tilde d([\lambda])=d(\lambda)$, we obtain $n=0$. Choose $m\in \N^k$ with $\sigma^m(x)=\sigma^m(y)$. In particular $x(0,m)$ and $y(0,m)$ have the same source. 
By assumption, we have
\[
[x(0,m)][y(0,m)]^{-1}=c(x,n,y)=1,
\]
so $[x(0,m)]=[y(0,m)]$ in $F(\Lambda)$.

Assume that the images of $x(0,m)$ and $y(0,m)$ in the fundamental groupoid $\Pi(\Lambda)$ are different. Then by \autoref{lem-groupoidfunctor}, there exists a homomorphism $\varphi\colon \Pi(\Lambda)\to \Gamma$ into a discrete group such that
\[
\varphi(i_\Lambda(x(0,m)))\not=\varphi(i_\Lambda(y(0,m))).
\]
Since $\varphi\circ i_\Lambda\colon \Lambda\to \Gamma$ is a functor, it induces a group homomorphism $\widetilde{\varphi}\colon F(\Lambda)\to \Gamma$, contradicting $[x(0,m)]=[y(0,m)]$. Hence the images of $x(0,m)$ and $y(0,m)$ in $\Pi(\Lambda)$ agree, and since $\Lambda$ embeds into $\Pi(\Lambda)$, we conclude $x(0,m)=y(0,m)$. By \cite[Proposition~2.3]{Kumjian2000}, this implies $x=y$, and therefore $c$ is pure.

Finally, to see that $c$ is locally proper, let $f=r\times c\times s$. For $\lambda,\mu\in \Lambda$, one checks that
\[
f^{-1}\big(Z(\lambda)\times \{[\lambda][\mu]^{-1}\}\times Z(\mu)\big)=Z(\lambda,\mu),
\]
where $Z(\lambda,\mu)$ is the compact set defined after \cite[Definition~2.7]{Kumjian2000}. Since every point in the codomain admits such a neighbourhood, a standard compactness argument shows that $f$ is proper.

The conclusion now follows from \autoref{thm:deCastroKang}.
\end{proof}

The following theorem shows that the total disconnectedness assumption in \autoref{rem:k-graph} cannot be dropped. 
In the following, we identify a single local homeomorphism $\sigma\colon X\to X$ with an $\N$-action $\sigma\colon \N\curvearrowright X$ via iteration.
\begin{theorem}\label{prop:no_pure_cocycle_connected}
Let $\sigma\colon X\to X$ be a local homeomorphism of a locally compact Hausdorff space. 
Suppose there exists a connected subset $C\subset X$ such that $\sigma|_C$ is not injective.
Then the Deaconu--Renault groupoid $G_\sigma$ admits no pure cocycle into any discrete group. 
In particular, $G_\sigma$ is not isomorphic to a transformation groupoid
arising from a partial action of a discrete group.
\end{theorem}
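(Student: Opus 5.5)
The plan is to argue by contradiction. Suppose $c\colon G_\sigma\to\Gamma$ is a pure cocycle into a discrete group $\Gamma$. Since $\sigma|_C$ is not injective, we can choose $x\neq y$ in $C$ with $\sigma(x)=\sigma(y)$. Then $(x,0,y)\in G_\sigma$, witnessed by $m=l=1$. This arrow is not a unit, so purity forces $c(x,0,y)\neq 1$. The idea is to connect $(x,0,y)$ to a unit $(z,0,z)$ through a connected family of arrows, and then use that $c$ is locally constant, because $\Gamma$ is discrete, to get a contradiction.

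The family we use is
\[
R\coloneqq\{(u,0,v)\mid u,v\in X,\ \sigma(u)=\sigma(v)\}\subset G_\sigma .
\]
We view it as the image of the set $\{(u,v)\in X\times X\mid \sigma u=\sigma v\}$ under the map $(u,v)\mapsto (u,0,v)$. With the Deaconu--Renault topology, whose basic sets are the $Z(U,1,1,V)$, this map is continuous, so $c$ restricted to $R$ pulls back to a locally constant function $f(u,v)\coloneqq c(u,0,v)$ on that subset of $X\times X$.

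We then need a connected subset of $\{(u,v)\mid \sigma u=\sigma v\}$ containing both $(x,y)$ and a diagonal point. Taking $C\times C$ is not enough, because it is not contained in the relation. The key step is therefore a local consideration. Since $\sigma$ is a local homeomorphism, the relation is the disjoint union of the diagonal $\Delta_X$, which is open in it, and its complement. The diagonal is also closed in the relation because $X$ is Hausdorff. So the relation is locally a union of graphs of local homeomorphisms, and the right approach is to look at the points of $C$ rather than at pairs.

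The main obstacle is exactly this connectedness step, and I would handle it by considering $C$ alone. For $u\in C$, the fiber $\sigma^{-1}(\sigma(u))$ is discrete. Define on $C$ the equivalence relation $u\sim u'$ iff $c(u,0,u')$ is defined and equals $1$; purity says this is just $u=u'$. Next fix the point $x$ and consider the function $g(u)\coloneqq c(u,0,x)$ on the set $A\coloneqq\{u\in C\mid \sigma u=\sigma x\}$. This set is discrete, so this alone does not help, and the connectedness must be used differently.

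Here is the alternative. The cocycle $c$ restricted to the open bisections $Z(U,1,0,\sigma U)$ (points $(u,1,\sigma u)$) is locally constant. Hence the function $h(u)\coloneqq c(u,1,\sigma(u))$ is continuous from $X$ to $\Gamma$, and therefore constant on the connected set $C$, say equal to $\gamma$. Then
\[
c(x,0,y)=c(x,1,\sigma x)\,c(\sigma y,-1,y)=h(x)h(y)^{-1}=\gamma\gamma^{-1}=1,
\]
where we use $\sigma x=\sigma y$. This contradicts purity, since $(x,0,y)$ is not a unit. So the proof is short once we use $h$. The only thing left to check is that $u\mapsto (u,1,\sigma u)$ is continuous into $G_\sigma$, which is immediate from the basic open sets of the Deaconu--Renault topology. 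The final statement of the theorem then follows from \autoref{thm:deCastroKang}.
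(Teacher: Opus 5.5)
Your final argument is correct and is essentially the paper's proof. The paper also uses that $x\mapsto c(x,1,\sigma(x))$ is constant on the connected set $C$, and then cites the fact from \autoref{thm:deCastroKang} that $c^{-1}(\gamma)$ is a bisection; the proof of that fact is exactly your inlined computation $c(x,0,y)=h(x)h(y)^{-1}=1$. The exploratory paragraphs about the relation $R$ and the equivalence relation on $C$ play no role and should be cut.
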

\begin{proof}
Assume by contradiction that there is a pure cocycle
$
c\colon G_\sigma\to\Gamma
$
into a discrete group $\Gamma$.
Let
\[
A_C=\{(x,1,\sigma(x))\mid x\in C\},
\]
which is a connected set being the image of $C$ under the continuous map 
\[\varphi\colon C\to A_C,\quad  x\mapsto (x,1,\sigma(x)).\]
Since $\Gamma$ is discrete and $c$ is continuous,
the image of a connected set is a single point, hence
$
A_C\subseteq c^{-1}(\gamma),
$
for some $\gamma \in \Gamma$. 

By \autoref{thm:deCastroKang}, $G_\gamma\coloneqq c^{-1}(\gamma)$ is a bisection. In particular,
\[
s|_{c^{-1}(\gamma)}\colon G_\gamma\to G^{(0)}
\]
is injective.
However,
\[
s|_{c^{-1}(\gamma)}\circ \varphi(x)=s(x,1,\sigma(x))=\sigma(x),
\]
for all $x\in C$. Since $\varphi$ is injective as well, we conclude that $\sigma|_C$ is injective, contradicting our assumptions.
\end{proof}

\begin{exa}\label{exa-doublingmap}
	Denote by $S^1\subset \C$ the unit circle.
	It follows from \autoref{prop:no_pure_cocycle_connected} that the Deaconu--Renault groupoid $G_\sigma$ associated to $\sigma\colon S^1\to S^1$,  $z\mapsto z^2$ is not a transformation groupoid by a partial group action. 
	In particular, there exists a Hausdorff \'etale groupoid with property $\bar\Delta$ which is not a transformation groupoid by a partial group action. It follows from \cite{Exel2011} that in this particular example, the reduced $C^*$-algebra $C^*_r(G_\sigma)$ is even a UCT Kirchberg algebra. 
\end{exa}

\begin{rem}
The Deaconu--Renault examples discussed above are of a rather different
nature from the ample examples considered in \autoref{sec:HLS}.
Indeed, they satisfy property $\bar\Delta$, and hence are inner amenable
in the sense of \autoref{sec:inner-amen}, and in many cases their $C^*$-algebras
are classifiable Kirchberg algebras. Nevertheless, they may fail to be
transformation groupoids of partial actions. This shows that the
obstruction to being a transformation groupoid is independent both from
inner amenability and from regularity/classifiability phenomena on the
$C^*$-algebraic side.
\end{rem}

\section{Coarse groupoids}\label{sec:coarse-groupoids}
In this section, we investigate when the \emph{coarse groupoid} associated to a uniformly locally finite metric space (or more generally connected coarse space) is isomorphic to a transformation groupoid associated to a partial action of a discrete group. We prove that the coarse groupoid can be realized as a transformation groupoid by a partial group action if and only if the space admits a coarse injection into the given group and that the coarse injection can be chosen to be a coarse embedding if and only if the partial action can be chosen to have clopen domains.

Coarse groupoids associated to uniformly locally finite metric spaces were originally introduced as groupoid models for their uniform Roe algebras \cite{Roe2003}. 
Recall that a metric space $X$ is called \emph{uniformly locally finite} if its
balls of a fixed radius have uniformly bounded cardinality, that is
\[
\sup_{x\in X}|B_R(x)|<\infty,
\qquad \text{for all } R>0.
\]
A basic example of a uniformly locally finite metric space is given by a finitely
generated group $\Gamma$ with a finite generating set
$S=S^{-1}\cup\{1\}$. The distance between two points $\gamma,\eta\in\Gamma$
is given by the word metric
\[
d_S(\gamma,\eta)
=
\min\{l\ge0 \mid \gamma\eta^{-1}=s_1\dots s_l,\ s_1,\dots,s_l\in S\},
\]
where we define the empty product as $1$ so that $d_S(\gamma,\gamma)=0$.

One of the many insights of Roe \cite{Roe2003} is that large-scale properties
of a metric space do not depend on the precise values of the metric, but only
on the collection of neighbourhoods of the diagonal in $X\times X$.
These sets are often called \emph{entourages}. This observation leads to the
axiomatic notion of a coarse structure which allows one to define Roe algebras and coarse groupoids in a more general setup.

Our main motivation for using this additional level of abstraction is that it
allows us to study large-scale properties of groups which are not finitely
generated.
\begin{defi}[{\cite[Definition~2.3]{Roe2003}}]
A \emph{coarse space} is a pair $(X,\mathcal C_X)$ where $X$ is a set and
$\mathcal C_X$ is a collection of subsets of $X\times X$ such that

\begin{enumerate}

\item $\Delta_X\in \mathcal C_X$;

\item if $E\in \mathcal C_X$, then
\[
E^{-1}=\{(x,y)\mid (y,x)\in E\}\in \mathcal C_X;
\]

\item if $E_1,E_2\in \mathcal C_X$, then
\[
E_1\circ E_2
=
\{(x,z)\mid (x,y)\in E_1,\ (y,z)\in E_2\}
\in \mathcal C_X;
\]

\item if $E_1,E_2\in \mathcal C_X$, then $E_1\cup E_2\in\mathcal C_X$;

\item if $E_2\subset E_1$ and $E_1\in\mathcal C_X$, then $E_2\in\mathcal C_X$.

\end{enumerate}

A coarse space $(X,\mathcal C_X)$ is called \emph{uniformly locally finite}
if for every $E\in\mathcal C_X$ we have
\[
\sup_{x\in X}|E[x]|<\infty,
\qquad
E[x]=\{y\in X\mid (x,y)\in E\}.
\]

A coarse space $(X,\mathcal C_X)$ is called \emph{connected} if for every
$x,y\in X$ there exists $E\in\mathcal C_X$ such that $(x,y)\in E$.
\end{defi}

We call $\mathcal C_X$ the \emph{coarse structure} of $X$ and simply say
that $X$ is a coarse space when the coarse structure is understood.
The elements of $\mathcal C_X$ are called \emph{entourages}.
For a given set $X$ and a subset $A\subset\mathcal P(X\times X)$ there is
always a smallest coarse structure containing $A$. We call this the
\emph{coarse structure generated by $A$}.

\begin{exa}
The following pairs $(X,\mathcal C_X)$ are uniformly locally finite
connected coarse spaces.

\begin{enumerate}

\item
A uniformly locally finite metric space $X$ with the coarse structure
generated by the sets
\[
E_R=\{(x,y)\in X\times X\mid d(x,y)<R\},
\qquad R>0.
\]

\item
A discrete group $\Gamma$ with the coarse structure generated by the sets
\[
E_S
=
\{(\gamma,\eta)\in\Gamma\times\Gamma\mid \gamma\eta^{-1}\in S\},
\]
for all finite subsets $S\subset\Gamma$.

\end{enumerate}

Unless stated otherwise, we equip all uniformly locally finite metric
spaces and all discrete groups with the coarse structures described above.

\end{exa}
We denote the Stone-{\v{C}}ech compactification of a discrete set $X$ by $\beta X$. 
\begin{defi}[{\cite[Definition 10.19]{Roe2003}}]
Let $X$ be a uniformly locally finite coarse space.
The \emph{coarse groupoid} associated to $X$ is defined by
\[
G(X)
=
\bigcup_{E\in\mathcal C_X}\overline{E}
\subset
\beta (X\times X),
\]
and equipped with the subspace topology. 
Its unit space is
\[
G(X)^{(0)}=\beta X\cong \beta(\Delta_X)\subset \beta(X\times X),
\]
and the range, source and multiplication maps are the unique continuous extensions of the maps
\[
r(x,y)=x,
\qquad
s(x,y)=y,
\qquad
(x,y)(y,z)=(x,z),
\]
for all $x,y,z\in X$ with $(x,y),(y,z)\in G(X)$ (see \cite[Theorem 10.20]{Roe2003}).
\end{defi}

\begin{defi}
A map $f\colon X\to Y$ between coarse spaces is called \emph{coarse}
if it satisfies $(f\times f)(\mathcal C_X)\subset \mathcal C_Y$, i.e. images of entourages are entourages. 
A coarse map $f$ is called a \emph{coarse embedding}, if we additionally have $(f\times f)^{-1}(\mathcal C_Y)\subset \mathcal C_X$, i.e. preimages of entourages are entourages. 
A coarse map which is injective as a map is called a \emph{coarse injection}.
\end{defi}

\begin{theorem}\label{thm:coarse}
Let $X$ be a uniformly locally finite connected coarse space and let
$\Gamma$ be a discrete group. Then the following are equivalent:

\begin{enumerate}
\item\label{item-coarse1} $X$ admits a coarse injection $X\hookrightarrow\Gamma$;

\item\label{item-coarse2} $G(X)$ is isomorphic to a transformation groupoid
\[
\Gamma\ltimes\beta X
\]
arising from a partial action $\Gamma\curvearrowright\beta X$.
\end{enumerate}
If the above conditions are satisfied, then the following are equivalent as well:
\begin{enumerate}
    \stepcounter{enumi}
    \stepcounter{enumi}
    \item\label{item-coarse1c} The coarse injection $X\hookrightarrow \Gamma$ can be chosen to be a coarse embedding;
    \item\label{item-coarse2c} The partial action $\Gamma\curvearrowright\beta X$ can be chosen to have clopen domains. 
\end{enumerate}
\end{theorem}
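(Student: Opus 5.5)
By \autoref{thm:deCastroKang}, statement \ref{item-coarse2} is equivalent to the existence of a pure cocycle $c\colon G(X)\to\Gamma$. The plan is therefore to translate between maps $f\colon X\to\Gamma$ and pure cocycles.

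\emph{From \ref{item-coarse1} to \ref{item-coarse2}.} Given a coarse injection $f\colon X\hookrightarrow\Gamma$, set $c_0(x,y)\coloneqq f(x)f(y)^{-1}$ on $X\times X$. For each entourage $E\in\mathcal C_X$, coarseness gives that $(f\times f)(E)$ is an entourage of $\Gamma$, i.e.\ contained in some $E_S$ with $S$ finite. So $c_0|_E$ takes values in the finite set $S$ and extends continuously to $\overline E\subset\beta(X\times X)$. These extensions are compatible on overlaps, so they glue to a continuous $c\colon G(X)\to\Gamma$. The groupoid homomorphism property holds on the dense set of composable pairs in $X\times X$. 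Since the set of composable pairs inside $\overline{E_1}\times\overline{E_2}$ is the closure of the composable pairs of $E_1\times E_2$ (a standard fact from \cite[Theorem 10.20]{Roe2003}), and $\Gamma$ is discrete, the identity $c(gh)=c(g)c(h)$ extends to all of $G(X)$. For purity, note that $c^{-1}(1)\cap\overline E=\overline{\{(x,y)\in E\mid f(x)=f(y)\}}$, because $c|_{\overline E}$ is locally constant with clopen fibres and $X\times X$ is dense. Injectivity of $f$ gives $\{(x,y)\in E\mid f(x)=f(y)\}=\Delta_X\cap E$, so $\ker c=\overline{\Delta_X}=\beta X$.

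\emph{From \ref{item-coarse2} to \ref{item-coarse1}.} Given a pure cocycle $c$, fix a base point $x_0\in X$ and define $f(x)\coloneqq c(x,x_0)$. This makes sense because $X$ is connected, so $(x,x_0)\in G(X)$. Then $f(x)f(y)^{-1}=c(x,y)$ by the cocycle identity. On $\overline E$, which is compact, $c$ takes only finitely many values $S$, so $(f\times f)(E)\subset E_S$ and $f$ is coarse. If $f(x)=f(y)$, then $c(x,y)=1$, so $(x,y)$ is a unit by purity, hence $x=y$; thus $f$ is injective.

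\emph{Second equivalence.} Domains are clopen if and only if each bisection $c^{-1}(\gamma)$ has clopen source image. For \ref{item-coarse1c}$\Rightarrow$\ref{item-coarse2c}, I plan to show that $c^{-1}(\gamma)$ is compact. Its trace on $X\times X$ is $F_\gamma\coloneqq\{(x,y)\mid f(x)f(y)^{-1}=\gamma\}\subset(f\times f)^{-1}(E_{\{\gamma\}})$, which is an entourage when $f$ is a coarse embedding. Hence $c^{-1}(\gamma)=\overline{F_\gamma}$ is compact open, and so $D_{\gamma^{-1}}=s(c^{-1}(\gamma))$ is compact open in $\beta X$, i.e.\ clopen. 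Conversely, if all domains are clopen, then $c^{-1}(\gamma)\cong D_{\gamma^{-1}}$ is compact, and therefore contained in finitely many $\overline E$, hence in a single $\overline E$. Then $(f\times f)^{-1}(E_S)=\bigcup_{\gamma\in S}F_\gamma$ is contained in an entourage, which shows that $f$ is a coarse embedding.

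\emph{Main obstacle.} The delicate points are the topology of $\beta(X\times X)$: identifying $c^{-1}(\gamma)$ with $\overline{F_\gamma}$, and checking that continuity and multiplicativity extend from $X\times X$ to $G(X)$. I would handle both by working inside the compact open sets $\overline E$, where locally constant functions correspond to finite partitions of $E$.
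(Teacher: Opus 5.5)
Your proposal is correct and follows the paper's proof: the paper uses the same map $f(x)=c(x,x_0)$ for (ii)$\Rightarrow$(i), effectively the same cocycle $(x,y)\mapsto f(x)f(y)^{-1}$ for (i)$\Rightarrow$(ii), and the same compactness-of-fibres argument for (iv)$\Rightarrow$(iii). The differences are only in implementation. You extend $f(x)f(y)^{-1}$ directly over each $\overline{E}$, whereas the paper composes $\beta(f\times f)$ with the isomorphism $G(\Gamma)(\beta Y)\cong\Gamma\ltimes\beta Y$. For (iii)$\Rightarrow$(iv) you compute $c^{-1}(\gamma)=\overline{F_\gamma}$ directly, whereas the paper argues that $\Phi$ is an isomorphism onto the restriction of the global action to the clopen set $\beta Y$.
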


\begin{rem}(cf.~\cite[Proof of Theorem~4]{Brodzki2007})
A uniformly locally finite coarse space $X$ admits a coarse embedding
$f\colon X\to\Gamma$ into a group if and only if it admits an
\emph{injective} coarse embedding into a group coarsely equivalent
to $\Gamma$.

Indeed, since $f$ is a coarse embedding the set
$f^{-1}(\Delta_\Gamma)$ is controlled, hence
\[
n=\sup_{\gamma\in\Gamma}|f^{-1}(\gamma)|<\infty
\]
since $X$ is uniformly locally finite.
We may therefore choose a map
\[
\ell\colon X\to\mathbb Z/n\mathbb Z
\]
which is injective on $f^{-1}(\gamma)$ for each $\gamma \in \Gamma$.
It follows that
\[
f\times\ell\colon X\to\Gamma\times\mathbb Z/n\mathbb Z
\]
is an injective coarse embedding.
\end{rem}

\begin{rem}
If $X$ admits a coarse embedding into a group $\Gamma$, then $X$ is
automatically connected as a coarse space, since groups are connected
with respect to their canonical coarse structures.
We do not know if every uniformly locally finite metric coarsely embeds or even coarsely injects into a discrete group.
\end{rem}

\begin{proof}[Proof of \autoref{thm:coarse}]
We first prove \ref{item-coarse2} $\Rightarrow$ \ref{item-coarse1}. 
Assume that 
\[
G(X)\cong \Gamma\ltimes\beta X
\]
for a partial action.
Denote by
\begin{equation}\label{eq-c}
c\colon G(X)\to\Gamma
\end{equation}
the associated pure cocycle as in \autoref{thm:deCastroKang}. 
Fix $x_0\in X$ and define
\begin{equation}\label{def-f}
f\colon X\to\Gamma,
\qquad
f(x)=c(x,x_0).
\end{equation}
Since $X$ is connected, we have $(x,x_0)\in G(X)$ for all $x\in X$,
so that $f$ is well defined.
If $f(x)=f(y)$ for some $x,y\in X$, then
\[
c(x,y)=c(x,x_0)c(x_0,y)=f(x)f(y)^{-1}=1,
\]
and since $c$ is pure this implies $x=y$.
Hence $f$ is injective.

We now show that $f$ is coarse.
Let $A\in \mathcal C_X$ be an entourage.
Then $\overline A$ is compact in $G(X)$ and therefore
$c(\overline A)$ is finite. Let
\[
S=c(\overline A)\subset\Gamma .
\]
For $(x,y)\in A$ we have
\(
f(x)f(y)^{-1}=c(x,y)\in S,
\)
so
\[
(f\times f)(A)\subset
E_S=
\{(\gamma,\eta)\in\Gamma\times\Gamma\mid \gamma\eta^{-1}\in S\},
\]
which is an entourage in $\Gamma$.

\medskip

We now prove \ref{item-coarse1} $\Rightarrow$ \ref{item-coarse2}. 
Assume that there exists a coarse injection
\[
f\colon X\hookrightarrow\Gamma.
\]
The left translation action $L\colon\Gamma\curvearrowright \Gamma$ of $\Gamma$ on itself given by 
\[L_\gamma\colon \Gamma \to \Gamma,\quad L_\gamma(\eta)=\gamma\eta,\quad \gamma\in \Gamma\]
extends to a
continuous action $L\colon \Gamma\curvearrowright \beta\Gamma$ on the Stone--Čech compactification of $\Gamma$.
Since $Y\coloneqq f(X)\subset \Gamma$ is discrete, the space $\beta Y$ can be identified
with the closure of $Y$ in $\beta\Gamma$.
Restricting the global action of $\Gamma$ on $\beta\Gamma$ to $\beta Y$
defines a partial action of $\Gamma$ on $\beta Y$ with clopen domains
\[
D_{\gamma^{-1}}
=
\beta Y\cap L_\gamma^{-1}(\beta Y), \qquad \gamma\in \Gamma.
\]
Note that there is a canonical isomorphism
\[\Gamma\ltimes \beta\Gamma \xrightarrow{\cong} G(\Gamma),\quad (\gamma,\xi)\mapsto (L_\gamma (\xi),\xi)\]
of topological groupoids (compare \cite[Example 10.26]{Roe2003}). 
By reducing this isomorphism to $\beta Y\subset \beta \Gamma$ (see \eqref{eq-reduction}), we obtain an isomorphism 
\begin{equation}\label{eq:iso-coarse}
G(\Gamma)(\beta Y)\xrightarrow{\cong} \Gamma\ltimes \beta Y.
\end{equation}
Since $f$ is coarse, it follows that the continuous extension
\(
\beta(f\times f)\colon \beta(X\times X)\to \beta(Y\times Y)
\)
satisfies
\begin{equation}\label{eq-GXinGamma}
    \beta(f\times f)(G(X))\subset G(\Gamma)(\beta Y)
\end{equation} 
Indeed, for every entourage $E\in \mathcal C_X$, the set $(f\times f)(E)$ is an
entourage in $\Gamma$, so
\[
\beta(f\times f)(\overline E)\subset \overline{(f\times f)(E)}\subset G(\Gamma).
\]
Taking the union over all entourages $E\in\mathcal C_X$, we obtain
\(
\beta(f\times f)(G(X))\subset G(\Gamma).
\)
Since $f(X)=Y$, the image is in fact contained in the reduction $G(\Gamma)(\beta Y)$.

Composing the map
\(
\beta(f\times f)\colon G(X)\to G(\Gamma)(\beta Y)
\)
with the inverse of the isomorphism~\eqref{eq:iso-coarse} we obtain a continuous groupoid homomorphism (not necessarily an isomorphism)
\begin{equation}\label{eq-Phi}
\Phi\colon G(X)\to \Gamma\ltimes \beta Y.
\end{equation}
Since $f\colon X\to Y$ is a bijection, the induced map $\beta f\colon \beta X\to \beta Y$
is a homeomorphism, so units correspond to units under the reduction
$G(\Gamma)(\beta Y)$. Hence $\Phi$ is a \emph{pure} homomorphism of groupoids in the sense that $\Phi(g)$ is a unit if and only if $g\in G(X)$ is a unit. 
Denote by $\tilde c\colon \Gamma\ltimes \beta Y\to \Gamma$ the pure cocycle from \autoref{thm:deCastroKang}.
It follows that $\tilde c\circ \Phi\colon G(X)\to \Gamma$ is a pure cocycle too. In particular, $G(X)$ is a transformation groupoid by a partial action $\Gamma\curvearrowright \beta X$.\footnote{Note that this partial action may be different from the partial action obtained by restricting the right translation action of $\Gamma\curvearrowright \beta \Gamma$ to $\beta Y$.}

\medskip 

We now prove \ref{item-coarse2c} $\Rightarrow$ \ref{item-coarse1c}.
Assume that $G(X)\cong \Gamma\ltimes \beta X$ for a partial action with clopen domains. 
We show that the map $f$ constructed in \eqref{def-f} is not only a coarse injection but in fact a coarse embedding.
To see this, let $B\in\mathcal C_\Gamma$ be an entourage.
Then $B\subset E_S$ for some finite set $S\subset\Gamma$.
Note that the cocycle $c$ defined in \eqref{eq-c} is proper by \autoref{thm:deCastroKang} since $\Gamma\curvearrowright \beta X$ has clopen domains. 
Thus, for each $\gamma\in S$, the set $c^{-1}(\gamma)\subset G(X)$ is compact.
It follows that  
\[c^{-1}(\gamma)\subset \overline{E_1}\cup\dotsb \cup\overline{E_n}=\overline{E_1\cup\dotsb\cup E_n},\] 
for finitely many entourages $E_1,\dotsc,E_n\in \mathcal C_X$. 
In particular, the set
\[
E_\gamma\coloneqq c^{-1}(\gamma)\cap (X\times X)\subset E_1\cup\dotsb\cup E_n
\]
is an entourage in $X$.
Hence
\[
(f\times f)^{-1}(B)\subset
\bigcup_{\gamma\in S}E_\gamma
\]
is an entourage as well, proving that $f$ is a coarse embedding.

Next, we prove \ref{item-coarse1c} $\Rightarrow$ \ref{item-coarse2c}. 
Assume that the map $f\colon X\to \Gamma$ is not only a coarse injection, but in fact a coarse embedding. 
In this case the %
map $\Phi$ constructed in \eqref{eq-Phi} is an isomorphism. 
More precisely, let $Y\coloneqq f(X)\subseteq \Gamma$. Since $f$ is injective and a coarse embedding, it induces a bijection $f:X\to Y$ which is a coarse equivalence between $X$ and $Y$ (endowed with the coarse structure induced from $\Gamma$), hence $G(X)\cong G(Y)$. 
Now $G(Y)$ identifies with the reduction $G(\Gamma)(\beta Y)$ of the coarse groupoid $G(\Gamma)\cong \Gamma\ltimes\beta\Gamma$ to $\beta Y$, which is precisely the transformation groupoid $\Gamma\ltimes \beta Y$ associated to the partial action obtained by restricting the $\Gamma$-action on $\beta\Gamma$ to the compact open subspace $\beta Y$. 
In particular, $G(X)$ is isomorphic (via $\Phi$) to the transformation groupoid associated to a partial action $\Gamma\curvearrowright \beta X$ with clopen domains.
\end{proof}

The following example was inspired by discussions with Tim de Laat, who pointed us towards the maximal uniformly locally finite coarse structure.%
\begin{exa}\label{exa-maximalcoarse}
    Let $X$ be an infinite set. Equip $X$ with the maximal uniformly locally finite coarse structure $\mathcal C^{\max}_X$ defined by declaring a set $E\subset X\times X$ to be an entourage if and only if 
    \[\sup_{x\in X}|\{y\in X\colon (x,y)\in E \text{ or } (y,x)\in E\}|<\infty.\]
    Then $X$ does not coarsely inject into a discrete group.
    In particular, the coarse groupoid $G(X)$ is not a transformation groupoid by a partial action of a discrete group $\Gamma$. 
\end{exa}
\begin{proof}
    Let $f\colon X\hookrightarrow \Gamma$ be an injective map into a discrete group. 
    We prove that $f$ is not coarse. 
    Let $x_1,x_2,x_3,\dotsc\in X$ be pairwise distinct elements. 
    We recursively define an increasing sequence of integers $0<n_1<n_2<\dotsb$ as follows. 
    Let $n_1=1$. Assume that $n_1,\dotsc,n_k$ are already chosen for some $k$. 
    Then we pick $n_{k+1}>n_k$ in such a way that 
    \[f(x_{k+1})f(x_{n_{k+1}})^{-1}\notin \{f(x_1)f(x_{n_1})^{-1},\dotsc,f(x_k)f(x_{n_k})^{-1}\},\]
    which is possible since $X$ is infinite and $f$ is injective. 
    By definition, the set 
    \[E\coloneqq \{(x_k,x_{n_k})\mid k\in \N\}\subset X\times X\]
    is an entourage for the maximal uniformly locally finite coarse structure $\mathcal C^{\max}_X$. 
    However, since the set 
    \[S\coloneqq \{f(x)f(y)^{-1}\mid (x,y)\in E\}\subset \Gamma\]
    is infinite, we conclude that $f(E)\subset \Gamma\times \Gamma$ is not an entourage. 
\end{proof}

The following example shows that a coarse groupoid may admit both
proper and non-proper pure cocycles. Equivalently, the same groupoid may
arise from partial actions with clopen domains and from partial actions
whose domains are open but not closed.

Thus the property that the domains are clopen is not intrinsic to the
groupoid itself but depends on the chosen realization as a
transformation groupoid.
\begin{exa}\label{exa:coarse_counterexample}
Let $X=\N$ and consider the coarse structure
\[
\mathcal C_X
=
\{E\subset \N\times \N \mid E\setminus \Delta_\N \text{ is finite}\}.
\]
Then $(X,\mathcal C_X)$ is uniformly locally finite and connected. %
We construct two different realizations of $G(X)$ as a transformation groupoid indicating that the properness of the cocycle (equivalently the closedness of the domains of the partial action) is not an invariant of the underlying groupoid. 
\begin{enumerate}
    \item 
A non-proper pure cocycle is given by 
\[c\colon G(X)\to \Z,\quad c(n,m)\coloneqq\begin{cases}0,&n=m\in \beta\N\\ n-m,&n,m\in \N.\end{cases}\]
Note that this cocycle is not proper since $c^{-1}(1)= \{(n,n+1)\mid n\in \N\}$ is not compact. 
The partial action $\theta\colon \Z\curvearrowright \beta \N$
is given by 
\[D_{k}=\{k,k+1,k+2,\dotsc\},\qquad D_{-k}=\N,\qquad \theta_k(n)=n+k,\]
for $k\in \N\setminus \{0\}$ and $n\in D_{-k}$.

\item 
A proper pure cocycle of $G(X)$ with values in the free group $\Gamma=\langle a,b\rangle$ is given by 
\[c\colon G(X)\to \Gamma,\quad c(n,m)=\begin{cases}
    1,&n=m\in \beta\N\\
    b^{-m}a^{n-m}b^{n},& n,m\in \N.
\end{cases}\]
Since the term $b^{-m}a^{n-m}b^{n}$ determines $n,m\in \N$ uniquely, the preimages $c^{-1}(\gamma)$ for $\gamma\in \Gamma$ are either $\beta \N$ for $\gamma=1$ or contain at most one point for $\gamma\not=1$. In particular, $c$ is proper and pure. 
\end{enumerate}
\end{exa}

\section{Inner amenability}\label{sec:inner-amen}

In this section, we provide the first examples of Hausdorff \'etale groupoids which fail to be inner amenable in the sense of \cite{AnantharamanDelaroche}. 

\begin{defi}[Positive type function]\label{def:pos-type-groupoid}
A function $\varphi\colon G\to \C$ is of \emph{positive type} (or \emph{positive definite})
if for every $u\in G^{(0)}$, every $m\in \N$ and every $g_1,\dots,g_m\in G_u$,
the matrix
\[
\bigl(\varphi(g_i g_j^{-1})\bigr)_{i,j=1}^m
\]
is positive in $M_m(\C)$.
\end{defi}

\begin{defi}[{\cite[Definition~5.4]{AnantharamanDelaroche}}]\label{def:proper-support}
A function $\varphi\colon G\times G\to \C$ has \emph{proper support} if for every compact
$K\subset G^{(0)}$ and every compact $C\subset G(K)$, the sets
\[
\supp(\varphi)\cap (G(K)\times C)\quad\text{and}\quad \supp(\varphi)\cap (C\times G(K))
\]
are relatively compact in $G\times G$.
\end{defi}

\begin{defi}[{\cite[Definition 5.5]{AnantharamanDelaroche}}]\label{def:inner-amenable}
A locally compact Hausdorff groupoid $G$ is called \emph{inner amenable} if for every compact subset $C\subset G$ and every $\varepsilon>0$, there exists a properly supported positive type function $\varphi\colon G\times G\to \C$ such that $\sup_{x,y\in G^{(0)}}\varphi(x,y)\leq 1$ and $\sup_{g\in C}|\varphi(g,g)-1|<\varepsilon$. 
\end{defi}
We compare the definition of inner amenability with the definition of amenability:
\begin{defi}[{\cite{Renault1980,AnantharamanDelaroche2000}}]
	A locally compact Hausdorff groupoid $G$ is called (topologically) \emph{amenable} if for every compact subset $C\subset G$ and every $\varepsilon>0$, there exists a compactly supported positive type function $\varphi\colon G\to \C$ such that $\sup_{x\in G^{(0)}}\varphi(x)\leq 1$ and $\sup_{g\in C}|\varphi(g)-1|<\varepsilon$. 
\end{defi}

\begin{exa}$~$\label{exa:inneramenable}
\begin{enumerate}
\item\label{exaitem1}
Every amenable groupoid is inner amenable. This can be seen by associating
to a positive type function $\varphi\colon G\to \C$ the positive type function
\[
G\times G\to \C,\qquad (g,h)\mapsto \varphi(g)\varphi(h)
\]
(c.f.\ \cite[paragraph after Definition~5.5]{AnantharamanDelaroche}).

\item\label{exaitem2}
Let $\Gamma \curvearrowright X$ be a partial action of a discrete group $\Gamma$
on a locally compact space $X$ which admits a Hausdorff globalization
(see \cite[\S 5]{Exel2017}). Equivalently, assume that the graphs $\{(x,\gamma x)\mid x\in D_{\gamma^{-1}}\}\subset X\times X$ are closed for all $\gamma\in \Gamma$. 
Then $\Gamma \ltimes X$ is inner amenable by
\cite[Corollary~5.18]{AnantharamanDelaroche}. This, in particular, applies
to all partial actions with clopen domains.
\end{enumerate}
\end{exa}

\begin{rem}
We do not know whether all partial transformation groupoids
(with possibly non-closed graphs) are inner amenable.
\end{rem}
As a consequence of Theorem~\ref{thm:coarse}, we immediately obtain:
\begin{cor}
If $X$ is a uniformly locally finite connected coarse space that coarsely
embeds into a group, then its coarse groupoid $G(X)$ is inner amenable.
\end{cor}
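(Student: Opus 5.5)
The plan is to combine \autoref{thm:coarse} with \autoref{exa:inneramenable}\ref{exaitem2}. Suppose $f\colon X\to\Gamma$ is a coarse embedding into a discrete group. \autoref{thm:coarse} is stated for coarse \emph{injections}, so I first need an injective coarse embedding. For this I use the remark following \autoref{thm:coarse}. Since $f$ is a coarse embedding, $f^{-1}(\Delta_\Gamma)$ is an entourage, so the fibres of $f$ have uniformly bounded cardinality $n$. Choosing $\ell\colon X\to\Z/n\Z$ injective on each fibre, the map $f\times\ell\colon X\to\Gamma\times\Z/n\Z$ is an injective coarse embedding into the discrete group $\Gamma'\coloneqq\Gamma\times\Z/n\Z$. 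Coarseness of $f\times\ell$ holds because $\Z/n\Z$ is finite, so any subset of $\Z/n\Z\times\Z/n\Z$ is an entourage. The embedding property is inherited from $f$, because preimages of entourages under $f\times \ell$ are contained in preimages under $f$ of their projections.

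Now \autoref{thm:coarse}, \ref{item-coarse1} together with \ref{item-coarse1c} $\Rightarrow$ \ref{item-coarse2c}, applied to $\Gamma'$, yields an isomorphism $G(X)\cong\Gamma'\ltimes\beta X$ for a partial action $\Gamma'\curvearrowright\beta X$ with clopen domains. Since $\beta X$ is compact, clopen domains are closed, so the partial action has closed graphs, as noted after the definition of partial actions. By \autoref{exa:inneramenable}\ref{exaitem2}, that is \cite[Corollary~5.18]{AnantharamanDelaroche}, the transformation groupoid $\Gamma'\ltimes\beta X$ is inner amenable. One could equally go through \autoref{lem:partial-actions-have-Delta} to get property $\bar\Delta$ and then invoke \cite[Lemma~5.16]{AnantharamanDelaroche}.

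Finally, inner amenability as in \autoref{def:inner-amenable} is defined purely in terms of the topological groupoid structure: compact sets, positive type functions, supports and reductions. It is therefore invariant under isomorphisms of topological groupoids, so $G(X)$ is inner amenable. The only genuine point to watch is the passage from a coarse embedding to an \emph{injective} one, which the remark handles; the rest is a direct chaining of earlier results.
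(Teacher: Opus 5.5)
Your proposal is correct and is essentially the paper's argument: the paper obtains the corollary directly from \autoref{thm:coarse} (a coarse embedding yields a realization $G(X)\cong\Gamma\ltimes\beta X$ with clopen domains) combined with \autoref{exa:inneramenable}\ref{exaitem2}. Your explicit upgrade to an injective coarse embedding into $\Gamma\times\Z/n\Z$, via the remark after \autoref{thm:coarse}, fills in a step the paper leaves implicit, and you carry it out correctly.
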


We now state the main result of this section which gives the first examples of non inner amenable groupoids. 
The strategy of proof is to approximate the fiber at infinity by compact fibers in order to upgrade a properly supported positive type function to a compactly supported positive type function, thus upgrading inner amenability to amenability. 
Throughout, we use the notation of \autoref{sec:HLS}.
\begin{theorem}\label{thm:notinneramenable}
	Let $(\Gamma,(N_k)_{k\in \N})$ be an approximated group. 
	Denote by $G=G_{\mathrm{HLS}}(\Gamma,(N_k)_{k\in \N})$ the associated HLS groupoid and by $G\ltimes \widehat X=G_{\mathrm{AFS}}(\Gamma,(N_k)_{k\in \N})$ the associated AFS groupoid. 
	Then the following are equivalent:
	\begin{enumerate}
		\item\label{thmitem1} $\Gamma$ is amenable;
		\item\label{thmitem2} $G$ is inner amenable;
		\item\label{thmitem3} $G\ltimes \widehat X$ is inner amenable.
	\end{enumerate} 
	In particular, there are locally compact Hausdorff \'etale (and ample and principal or non-principal) groupoids which are not inner amenable. 
\end{theorem}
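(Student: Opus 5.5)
We prove \ref{thmitem1}$\Rightarrow$\ref{thmitem2},\ref{thmitem3} through amenability, and \ref{thmitem2}$\Rightarrow$\ref{thmitem1}, \ref{thmitem3}$\Rightarrow$\ref{thmitem1} by passing a positive type function on the fiber at infinity.

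\textbf{Amenable $\Gamma$.} Suppose $\Gamma$ is amenable. We show that $G$ is an amenable groupoid. Given a compact $C\subset G$ and $\varepsilon>0$, choose a finitely supported positive type function $\psi$ on $\Gamma$ with $\psi(1)\le 1$ and $|\psi-1|<\varepsilon$ on the finite set $\pi_\infty$-image of $C$. Choose $n$ so large that $\pi_k$ is injective on $\supp\psi$ for all $k\ge n$. Define $\varphi(k,\pi_k(\gamma))=\psi(\gamma)$ for $k\ge n$ and $\gamma\in\supp\psi$, and $\varphi=0$ otherwise. We must check two things. First, at each level $k\ge n$, the function $\varphi$ restricted to the group $\Gamma_k$ is of positive type. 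Second, the values at levels $k<n$ only occur on finitely many isolated points, so they can be handled by modifying the function on the compact finite part. The positivity requires care: the function $\psi\circ\pi_k^{-1}$ on $\Gamma_k$ is not automatically positive definite.

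To get positivity, start from a F{\o}lner-type function instead. Take $\psi=\langle\lambda_\gamma\xi,\xi\rangle$ with $\xi$ finitely supported, and enlarge $n$ so that $\pi_k$ is injective on $\supp\xi\cdot(\supp\xi)^{-1}$. Then $\xi$ pushes forward to $\xi_k$ on $\Gamma_k$, and $\langle\lambda_{\pi_k\gamma}\xi_k,\xi_k\rangle=\psi(\gamma)$, which is positive type. At levels $k<n$, set $\varphi$ to be $1$ at the unit and $0$ elsewhere. The resulting function is continuous, of positive type, and compactly supported, so $G$ is amenable. Since $G\ltimes\widehat X$ is a transformation groupoid of an amenable groupoid, it is amenable as well, because pulling back along the projection to $G$ preserves positive type and properness. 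Both groupoids are then inner amenable by \autoref{exa:inneramenable}\ref{exaitem1}.

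\textbf{Converse.} Suppose $G$ (or $G\ltimes\widehat X$) is inner amenable. Fix a finite $F\subset\Gamma$ and $\varepsilon>0$. Apply inner amenability to the compact set $C=\bigcup_{\gamma\in F}N(0,\gamma)$, or to $C\times\widehat X$ in the AFS case, to get a properly supported positive type $\varphi$ on $G\times G$ with $\varphi\le1$ on units and $\varphi(g,g)\approx1$ on $C$.

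Restrict $\varphi$ to the isotropy group of $G\times G$ at the unit $(n,\infty)$, which is $\Gamma_n\times\Gamma$. Set
\[
\psi_n(\gamma)\coloneqq\varphi\big((n,\pi_n(\gamma)),(\infty,\gamma)\big).
\]
This $\psi_n$ is the pullback of a positive type function along $\gamma\mapsto(\pi_n\gamma,\gamma)$, hence is positive type on $\Gamma$. By continuity of $\varphi$ at the diagonal points $((\infty,\gamma),(\infty,\gamma))$ for $\gamma\in F$, we get $\psi_n\approx1$ on $F$ for $n$ large.

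The key point, and the main obstacle, is finiteness of support. Take the compact set $G_n\times\{n\}$, or $\Gamma_n\ltimes\widehat X_n$ in the AFS case. Proper support implies that $\supp\varphi\cap(G(K)\times C')$ is relatively compact, where $C'$ is a compact set containing the level-$n$ arrows. We need to arrange things so that the $\infty$-coordinate is forced into a compact, hence finite, subset of $\Gamma$. For this, use the second set $\supp(\varphi)\cap(C'\times G(K))$ with $K=\N\cup\{\infty\}$ compact and $C'=\{n\}\times\Gamma_n$ compact open. Then the set of $\gamma$ with $\psi_n(\gamma)\ne0$ lies in a compact subset of the discrete fiber $\{\infty\}\times\Gamma$, and is therefore finite.

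So $\psi_n$ is a finitely supported positive type function on $\Gamma$ with $\psi_n(1)\le1$ and $\psi_n\approx1$ on $F$. Hence $\Gamma$ is amenable.

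For the AFS case we run the same argument at the unit $((n,y_n),(\infty,y))$. Here the isotropy is trivial, so the group restriction does not apply directly. Instead we use the Godement-type construction: define
\[
\psi(\gamma)=\varphi\big((n,\pi_n\gamma,y_n),(\infty,\gamma,y)\big)
\]
with $y_n\to y$ chosen as in the proof of \autoref{thm:HLS action-no-Delta-uniform}. Positive typeness follows because the orbit maps are compatible, via the equicontinuity from \autoref{prop:AFScontinuous}. Making this positivity transfer rigorous in the principal case is the step I expect to need the most care.
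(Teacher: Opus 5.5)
Your proofs of (i)$\Rightarrow$(ii) and (i)$\Rightarrow$(iii) through amenability of $G$ and of $G\ltimes\widehat X$ work, with one small fix. On the finitely many levels $k<n$ you should take $\varphi\equiv 1$ on $\Gamma_k$, not the Dirac function at the unit, and choose $n$ above the levels of the finitely many points of $C$ outside $\bigcup_{\gamma\in F}N(0,\gamma)$. Otherwise non-unit points of $C$ violate $|\varphi-1|<\varepsilon$.

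Your (ii)$\Rightarrow$(i) is correct, and more direct than the paper, which gets it by passing through the AFS groupoid. The isotropy of $G\times G$ at $(n,\infty)$ is $\Gamma_n\times\Gamma$, and proper support with $C'=\{n\}\times\Gamma_n$ forces finite support.

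The genuine gap is (iii)$\Rightarrow$(i). You cannot avoid it, since you never prove (iii)$\Rightarrow$(ii), and it is what yields the \emph{principal} non-inner-amenable examples. Your $\psi(\gamma)=\varphi\bigl((n,\pi_n\gamma,y_n),(\infty,\gamma,y)\bigr)$ is not of positive type. The arrows $g_j=\bigl((n,\pi_n\gamma_j,y_n),(\infty,\gamma_j,y)\bigr)$ share a source, but
\[
g_ig_j^{-1}=\bigl((n,\pi_n(\gamma_i\gamma_j^{-1}),\pi_n(\gamma_j)y_n),(\infty,\gamma_i\gamma_j^{-1},\gamma_jy)\bigr)
\]
sits over the moving base point $\gamma_jy$. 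So positivity of $\varphi$ only makes $(\gamma_i,\gamma_j)\mapsto\varphi(g_ig_j^{-1})$ a positive definite \emph{kernel} on $\Gamma$; it says nothing about the matrix $(\psi(\gamma_i\gamma_j^{-1}))_{i,j}$. Equicontinuity only controls how close these arrows are to the diagonal, i.e.\ approximate values, never exact positivity. Moreover, a bounded finite-propagation positive definite kernel close to $1$ near the diagonal exists for every group with property A, such as $\mathbb F_2$. So evaluation along a single orbit cannot by itself detect amenability.

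The missing idea is averaging against the $\Gamma$-invariant Haar probability measure $\mu$ on $\widehat X_\infty\cong\widehat\Gamma_\infty$.
\begin{enumerate}
\item Take $y_n=\widehat\pi_{n,\infty}(y)$ for \emph{every} $y\in\widehat X_\infty$. Since $\widehat\pi_{n,\infty}(\gamma y)=\pi_n(\gamma)\widehat\pi_{n,\infty}(y)$, the map $\theta_n(\gamma,y)=\bigl((n,\pi_n\gamma,\widehat\pi_{n,\infty}(y)),(\infty,\gamma,y)\bigr)$ is a continuous groupoid homomorphism $\Gamma\ltimes\widehat X_\infty\to(G\ltimes\widehat X)\times(G\ltimes\widehat X)$.
\item Hence $\Psi=\varphi\circ\theta_n$ is of positive type on $\Gamma\ltimes\widehat X_\infty$, with $\Psi\le1$ on units. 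It is compactly supported, by proper support applied to the compact set $\Gamma_n\ltimes\widehat X_n$.
\item The level $n$ must be chosen by a compactness argument so that $|\Psi-1|<\varepsilon$ uniformly on $F\times\widehat X_\infty$. The paper does this with the open neighbourhood $U$ and the compact set $A$; continuity at one point $y$ does not suffice.
\item This shows $\Gamma\ltimes\widehat X_\infty$ is amenable. Then $\psi(\gamma)=\int_{\widehat X_\infty}\Psi(\gamma,z)\,d\mu(z)$ is a finitely supported positive definite function on $\Gamma$ with $\psi(1)\le1$ and $|\psi-1|<\varepsilon$ on $F$. For positivity, integrate $\sum_{i,j}c_i\bar c_j\Psi(\gamma_i\gamma_j^{-1},\gamma_jz)\ge0$ over $z$ and use invariance of $\mu$ to replace $\gamma_jz$ by $z$.
\end{enumerate}
This averaging step is exactly what your single-point construction lacks, and it is how the paper concludes that $\Gamma$ is amenable.
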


\begin{proof}
	The implication $\ref{thmitem1}\Rightarrow\ref{thmitem2}$ follows from \cite[Lemma 2.3]{Willett2015} and \autoref{exa:inneramenable}\ref{exaitem1}. The implication $\ref{thmitem2}\Rightarrow\ref{thmitem3}$ follows from \cite[Proposition 5.6]{AnantharamanDelaroche} and the fact that the projection map $\rho\colon G\ltimes \widehat X\to G$ is locally proper. It remains to prove $\ref{thmitem3}\Rightarrow \ref{thmitem1}$. 

	Suppose that $G\ltimes \widehat X$ is inner amenable. 
	We first prove that $\Gamma\ltimes \widehat X_\infty$ is amenable. 
	Let $\varepsilon>0$ and let $C\subset \Gamma\ltimes \widehat X_\infty$ be a compact subset.
	We may assume that there is a finite subset $F\subset \Gamma$ such that 
	\[C\coloneqq \{(\infty,\gamma,x)\mid \gamma\in F, x\in \widehat X_\infty\}.\]
	Since $G\ltimes X$ is inner amenable, we can find a properly supported positive type function $\varphi\colon (G\ltimes \widehat X)\times (G\ltimes \widehat X)\to \C$ such that 
	\begin{equation}\label{varphiproperties}
		 \sup_{x,y\in \widehat X}\varphi(x,y)\leq 1 \qquad \text{ and }\qquad
	 \sup_{g\in C}|\varphi(g,g)- 1|< \varepsilon,
	\end{equation}
	where $C$ is considered as a subset of $G\ltimes\widehat X$. 
	Since $\varphi$ is continuous, there is an open neighbourhood $\Delta_C\coloneqq \{(g,g)\mid g\in C\}\subset U\subset (G\ltimes \widehat X)^{\times 2}$ such that 
	\begin{equation}\label{eq:smallonU}
		\sup_{(g,h)\in U}|\varphi(g,h)-1|<\varepsilon.
	\end{equation}
	Denote by $\widehat \pi_k\colon \Gamma\ltimes \widehat X_\infty\to \Gamma_k\ltimes\widehat X_k$ the quotient map and note that the set
	\[A\coloneqq \bigcup_{k\in \N\cup\{\infty\}}(\widehat \pi_k\times \id)(C)\setminus U\subset (G\ltimes \widehat X)^{\times 2}\] 
	is compact being the complement of an open set inside a continuous image of $(\N\cup \{\infty\})\times C$.
	Thus, denoting by $\mathrm{pr}_1\colon (G\ltimes \widehat X)^{\times 2}\to G\ltimes \widehat X$ the projection to the first factor, the set $(r\circ \rho\circ \mathrm{pr}_1)(A)\subset \N\cup \{\infty\}$ is a compact subset which does not contain $\infty$ since $\Delta_C\subset U$. 
	In particular, there exists an integer $n\in \N\setminus (r\circ \rho\circ \mathrm{pr}_1)(A)$. 
	It follows that 
	\begin{equation}\label{eq:UC}
		 (\widehat\pi_n\times \id)(C)\subset \bigcup_{k\in \N\cup\{\infty\}}(\widehat \pi_k\times \id)(C)\setminus A \subset U.
	\end{equation}
	The function
	\[\psi\colon \Gamma\ltimes \widehat X_\infty\to \C,\quad \psi(\gamma,x)=\varphi(\widehat \pi_n(\gamma,x),\gamma,x)\]
	is positive type being the composition of a positive type function with the groupoid homomorphism $\widehat\pi_n\times \mathrm{id}\colon \Gamma\ltimes \widehat X_\infty\to (G\ltimes \widehat X)^{\times 2}$. 
	Since $\varphi$ is properly supported and $\Gamma_n\ltimes \widehat X_n$ is compact, $\psi$ is compactly supported. 
	We have $\sup_{x\in \widehat X_\infty}\psi(x)\leq 1$ by \eqref{varphiproperties}, and $\sup_{g\in C}|\psi(g)-1|<\varepsilon$ by \eqref{eq:smallonU} and \eqref{eq:UC}. 
	Thus, $\Gamma\ltimes \widehat X_\infty$ is amenable. 
	Since the Haar measure on $\widehat X_\infty=\{\infty\}\times \widehat \Gamma$ is a $\Gamma$-invariant Borel probability measure, it follows that $\Gamma$ is amenable (c.f. \cite[Lemma 2.2]{Gardella2023}).  
\end{proof}

\begin{rem}
	The main motivation of Anantharaman-Delaroche's notion of inner amenability is \cite[Theorem 10.8]{AnantharamanDelaroche} which proves the equivalence of several different exactness conditions for inner amenable \'etale groupoids $G$, ranging from the formally strongest condition of \emph{strong amenability at $\infty$} to the formally weakest condition of \emph{exactness} of the reduced $C^*$-algebra $C^*_r(G)$.
	Although our \autoref{thm:notinneramenable} provides examples of groupoids which are not inner amenable, they still satisfy the theorem by Anantharaman-Delaroche.

	Indeed, following an argument of Kirchberg, Willett showed in \cite[Lemma 2.3]{Willett2015} that the HLS groupoid $G$ associated to an approximated group $(\Gamma,(N_k)_{k\in \N})$ has an exact reduced $C^*$-algebra $C^*_r(G)$ if and only if $\Gamma$ is amenable.
	The associated Alekseev--Finn-Sell groupoid $G\ltimes \widehat X$ behaves analogously: Since $C^*_r(G\ltimes \widehat X)$ contains $C^*_r(G)$ as a subalgebra, it is exact if and only if $C^*_r(G)$ is exact. 
\end{rem}

\begin{rem}
	We do not know if all coarse groupoids $G(X)$ associated to uniformly locally finite coarse spaces are inner amenable. However, coarse groupoids associated to uniformly locally finite metric spaces still satisfy Anantharaman--Delaroche's theorem \cite[Theorem 10.8]{AnantharamanDelaroche} since in this case, all the appearing exactness conditions can be characterized in \cite{Sako2020} in terms of Yu's \emph{property A} \cite{Yu2000}. 
\end{rem}

\section{Crossed product models for Kirchberg algebras}\label{sec:kirchberg}

In this section, we prove that every unital UCT Kirchberg algebra arises from a partial action of a discrete group on the Cantor set. 
This complements a theorem by Wu \cite{Wu2025} which represents every stable UCT Kirchberg algebra in terms of a global action of a discrete group $\Gamma$ on a totally disconnected locally compact Hausdorff space $X$. 
To prove the theorem, we show that in Wu's models, any class in $K_0(C_0(X)\rtimes_r \Gamma)$ can be represented by the characteristic function of a compact open subset of $X$. This allows us to choose our partial action models as reductions of Wu's actions to these subsets.
We achieve this by tracing through the computations of \cite{Brownlowe2025} to identify generators of $K_0(C_0(X)\rtimes_r \Gamma)$ with certain compact open subsets of $X$. Subsequently, we establish \emph{paradoxical comparison} in the sense of \cite{Ma2022} for $\Gamma\curvearrowright X$ in order to be able to represent sums and differences of the generators again by compact open subsets. 

Let $G$ be a Hausdorff \'etale groupoid. The associated \emph{full groupoid $C^*$-algebra} $C^*(G)$ is the enveloping $C^*$-algebra of the $*$-algebra $C_c(G)$ of compactly supported continuous functions on $G$ with respect to the product and involution given by 
\[f^*(g)=\overline{f(g^{-1})},\qquad f_1*f_2(g)=\sum_{h\in G^{r(g)}}f_1(h)f_2(h^{-1}g),\]
for $f,f_1,f_2\in C_c(G)$ and $g\in G$. 

There is an alternative completion of $C_c(G)$ called the \emph{reduced groupoid $C^*$-algebra} $C^*_r(G)$. 
But since all groupoids appearing in this section are amenable, we have canonical isomorphisms $C^*(G)=C^*_r(G)$, so that we are free to ignore the specific choice of completion. We refer to \cite{AnantharamanDelaroche2000} for more background on groupoid $C^*$-algebras and amenability. 

Note that the characteristic function $1_O$ of any compact open subset $O\subset G^{(0)}$ defines a projection in $C^*(G)$ and therefore a class $[1_O]$ in the $K$-theory group $K_0(C^*(G))$. 
The only properties of $K_0(C^*(G))$ which we will need are the following elementary facts:
\begin{lemma}\label{lem-elementary-K}
Let $G$ be a Hausdorff \'etale groupoid. 
\begin{enumerate}
	\item For any compact open bisection $U\subset G$, we have 
		\[[1_{s(U)}]=[1_{r(U)}]\in K_0(C^*(G)).\]
	\item For any disjoint compact open subsets $O_1,O_2\subset G^{(0)}$, we have 
		\[{[1_{O_1}]+[1_{O_2}]=[1_{O_1\cup O_2}]}\in K_0(C^*(G)).\]
\end{enumerate}
\end{lemma}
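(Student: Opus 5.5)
For part (i) we will exhibit an explicit Murray--von Neumann equivalence between $1_{s(U)}$ and $1_{r(U)}$ inside $C_c(G)\subset C^*(G)$. Since $U$ is a compact open bisection of a Hausdorff étale groupoid, it is clopen in $G$, so $v\coloneqq 1_U$ is continuous and compactly supported. We will compute $v^*v$ and $vv^*$ directly from the convolution formula. First, $v^*=1_{U^{-1}}$. Then
\[
v^*v(g)=\sum_{h\in G^{r(g)}}1_{U^{-1}}(h)1_U(h^{-1}g).
\]
A nonzero summand requires $h^{-1}\in U$ and $h^{-1}g\in U$. Both elements lie in $U$ and have range $s(h)$, so injectivity of $r|_U$ forces $h^{-1}=h^{-1}g$. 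Hence $g$ is the unit $s(h^{-1})\in s(U)$. Conversely, for $g=u\in s(U)$ there is exactly one such $h$. This gives $v^*v=1_{s(U)}$.

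The symmetric computation, using injectivity of $s|_U$, gives $vv^*=1_{r(U)}$. So $v$ is a partial isometry implementing the equivalence $1_{s(U)}\sim 1_{r(U)}$, and therefore $[1_{s(U)}]=[1_{r(U)}]$ in $K_0$.

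For part (ii) we argue as follows. For disjoint compact open sets $O_1,O_2\subset G^{(0)}$, the functions $1_{O_1}$ and $1_{O_2}$ are projections in $C_0(G^{(0)})\subset C^*(G)$. Their product is $1_{O_1\cap O_2}=0$, so they are orthogonal, and their sum is $1_{O_1\cup O_2}$. For orthogonal projections $p,q$ one has $[p]+[q]=[p+q]$ in $K_0$. Concretely, $\mathrm{diag}(p,q)$ is unitarily equivalent, by a rotation-type argument or by the partial isometry $\begin{pmatrix}p&q\\0&0\end{pmatrix}$, to $\mathrm{diag}(p+q,0)$. This gives the claim.

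Neither part presents a real obstacle. The only point needing care is the convolution bookkeeping in (i), namely checking that the bisection property kills all off-diagonal contributions. We also need to confirm that the multiplication on $C_c(G)$ restricted to units agrees with pointwise multiplication; this holds because $G^{(0)}$ is clopen in an étale Hausdorff groupoid.
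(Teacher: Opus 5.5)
Your proof is correct. The paper states this lemma without proof as an elementary fact, and your argument is exactly the standard justification being taken for granted: $v=1_U$ is a partial isometry in $C_c(G)$ with $v^*v=1_{s(U)}$ and $vv^*=1_{r(U)}$, and the $K_0$-class of a sum of orthogonal projections is the sum of their classes. (Minor wording point: the matrix $\begin{pmatrix}p&q\\0&0\end{pmatrix}$ gives a Murray--von Neumann equivalence between $\mathrm{diag}(p,q)$ and $\mathrm{diag}(p+q,0)$, not a unitary equivalence, but that is all you need.)
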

We will also need the following well-known elementary lemma.
\begin{lemma}\label{lem-full-corner}
	Let $A$ be a simple separable $C^*$-algebra and $p\in A$ a projection. Then the inclusion $pAp\hookrightarrow A$ is a $KK$-equivalence. In particular, it induces an isomorphism $K_*(pAp)\to K_*(A)$. 
\end{lemma}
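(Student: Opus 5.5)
The plan is to show that $pAp$ is a \emph{full} corner of $A$ and then invoke Brown's stable isomorphism theorem, or argue directly with the associated Morita equivalence bimodule. We may assume $p\neq 0$; if $p=0$ the statement is vacuous or degenerate, and we tacitly exclude this case. By simplicity of $A$, the closed two-sided ideal $\overline{ApA}$ is nonzero and hence equals $A$, so $p$ is a full projection. The corner $pAp$ is then separable (as a subalgebra of a separable algebra) and $\sigma$-unital (it is unital with unit $p$). Also $A$ is separable, hence $\sigma$-unital.

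With fullness in hand, $X\coloneqq pA$ is a $pAp$–$A$ imprimitivity bimodule, with inner products $\langle x,y\rangle_A=x^*y$ and ${}_{pAp}\langle x,y\rangle=xy^*$. Fullness of the right inner product is exactly $\overline{ApA}=A$, and fullness of the left one is clear. Viewed as a right Hilbert $A$-module, $X$ carries the left action of $pAp$ by compact operators (indeed $\mathcal K(X)\cong pAp$), so it defines a class $[X]\in KK(pAp,A)$. This class is a $KK$-equivalence, with inverse given by the conjugate module $\overline X=Ap\in KK(A,pAp)$: the two Kasparov products are the classes of the balanced tensor products $X\otimes_A\overline X\cong pAp$ and $\overline X\otimes_{pAp}X\cong A$, which are the identity classes.

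Next we identify $[X]$ with the class of the inclusion $\iota\colon pAp\hookrightarrow A$. The class $[\iota]$ is represented by the module $A_A$ with left action through $\iota$ and zero operator. This module decomposes as $pA\oplus(1-p)A$ (working in the multiplier algebra if $A$ is nonunital). Since $pAp$ acts by zero on the second summand, that summand is degenerate and contributes nothing. Hence $[\iota]=[X]$, and $\iota$ is a $KK$-equivalence. The $K$-theory statement then follows by applying $KK(\C,-)$, i.e.\ functoriality of $K_*=KK_*(\C,-)$.

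An alternative route is Brown's theorem: since $pAp$ is full and both algebras are $\sigma$-unital, we have $pAp\otimes\mathcal K\cong A\otimes\mathcal K$. The main obstacle on that route is checking that the isomorphism is compatible with $\iota\otimes\id$ up to $KK$, which is why the direct bimodule argument is preferable. The only genuinely nontrivial step in the bimodule argument is the Kasparov product computation, which is standard; I would simply cite it from the literature on Morita equivalence in $KK$-theory.
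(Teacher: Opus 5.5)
Your proof is correct and is essentially the paper's argument. The paper applies Kasparov's theorem on full Hilbert modules (Theorem~2.18 of Kasparov's 1988 paper) to $\mathscr S=pA$, which is exactly your imprimitivity bimodule $X=pA$, with fullness coming from simplicity. Your explicit identification of $[\iota]$ with $[X]$ via the degenerate summand $(1-p)A$ fills in a step the paper leaves implicit. One correction: $p\neq 0$ is genuinely needed, since for $p=0$ the claim is false in general (e.g.\ $A=\C$), not vacuous.
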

\begin{proof}
	Apply \cite[Theorem~2.18]{Kasparov1988} to $\mathscr S=pA$. 
\end{proof}

\begin{defi}[{\cite[Definition 3.6]{Ma2022}}]
	A Hausdorff \'etale groupoid $G$ with totally disconnected unit space\footnote{Ma defines paradoxical comparison for general Hausdorff \'etale groupoids. Since the definition simplifies for totally disconnected unit spaces, we only give it in this special case.} is said to have \emph{paradoxical comparison} if for every nonempty compact open subset $O\subset G^{(0)}$, there are compact open bisections $U_1,U_2\subset G$ such that $s(U_1)=s(U_2)=O$ and such that $r(U_1),r(U_2)$ are disjoint subsets of $O$. 
\end{defi}

\begin{lemma}\label{lem-paradoxical}
	Let $G$ be a Hausdorff \'etale groupoid with totally disconnected unit space and paradoxical comparison. Then the set
	\[S\coloneqq \{[1_O]\mid O\subset G^{(0)} \text{compact open}\}\subset K_0(C^*(G))\]
	is a subgroup. 
\end{lemma}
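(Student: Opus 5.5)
We have to show that $S$ contains $0$ and is closed under addition and under taking inverses; this is enough for $S$ to be a subgroup of $K_0(C^*(G))$. The empty set is compact open, and $[1_\emptyset]=0$, so $0\in S$. Both of the other properties come from \autoref{lem-elementary-K} together with paradoxical comparison. The inverse step is the key point.

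\emph{Addition.} Let $O_1,O_2\subset G^{(0)}$ be compact open with $O_1\neq\emptyset$. (If $O_1=\emptyset$ the sum is simply $[1_{O_2}]$.) Paradoxical comparison applied to $O_1$ gives compact open bisections $U_1,U_2$ with $s(U_1)=s(U_2)=O_1$ and disjoint ranges inside $O_1$. By \autoref{lem-elementary-K}(i), $[1_{O_1}]=[1_{r(U_1)}]$, so we may replace $O_1$ by the smaller set $r(U_1)$. The issue is that we also need room to move $O_2$ off of it.

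The cleaner route is to first make $O_1\cup O_2$ disjoint from its copies. Set $W=O_1\cup O_2$, which is compact open (and nonempty, as $O_1\neq\emptyset$). Paradoxical comparison applied to $W$ gives bisections $V_1,V_2$ with $s(V_i)=W$ and disjoint ranges $r(V_1),r(V_2)\subset W$. Consider the restrictions $V_1O_1:=V_1\cap s^{-1}(O_1)$ and $V_2O_2:=V_2\cap s^{-1}(O_2)$. These are compact open bisections because $O_i$ is clopen in $W$. Their ranges $r(V_1O_1)$ and $r(V_2O_2)$ are disjoint compact open sets, and by \autoref{lem-elementary-K}(i) they represent $[1_{O_1}]$ and $[1_{O_2}]$ respectively. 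Then \autoref{lem-elementary-K}(ii) shows that $[1_{O_1}]+[1_{O_2}]$ equals $[1_{r(V_1O_1)\cup r(V_2O_2)}]\in S$.

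\emph{Inverses.} This is the step needing an idea. Let $O\neq\emptyset$ be compact open, and take $U_1,U_2$ as in the definition of paradoxical comparison. Put $P:=O\setminus(r(U_1)\cup r(U_2))$. This is compact open, since $r(U_i)$ is compact open and hence clopen in $O$. By \autoref{lem-elementary-K}, applied to the disjoint union $O=r(U_1)\sqcup r(U_2)\sqcup P$, we get
\[[1_O]=[1_{r(U_1)}]+[1_{r(U_2)}]+[1_P]=2[1_O]+[1_P].\]
Hence $-[1_O]=[1_P]\in S$. For $O=\emptyset$, the inverse of $0$ is $0$.

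Combining the three facts, $S$ is a subgroup. The only delicate point is checking that the restricted sets are compact open bisections with the stated sources and ranges. This holds because $r(U)$ and $s(U)$ are compact open, and restricting a compact open bisection to a clopen subset of its source yields a compact open bisection.
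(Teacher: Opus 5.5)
Your proof is correct and is essentially the paper's argument. For addition you apply paradoxical comparison to $O_1\cup O_2$ and use the restricted bisections $V_iO_i$ to produce disjoint copies; for inverses you decompose $O=r(U_1)\sqcup r(U_2)\sqcup P$ to get $-[1_O]=[1_P]$. Your separate treatment of the empty set, where paradoxical comparison does not apply, is a small but welcome addition, and you can delete the exploratory first attempt at the start of the addition paragraph.
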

\begin{proof}
	We first prove that $S$ is closed under addition.
	Let $O_1,O_2\subset G^{(0)}$ be compact open subsets. 
	By paradoxical comparison, there are compact open bisections $U_1,U_2\subset G$ such that $s(U_1)= s(U_2)=O_1\cup O_2$ and $r(U_1), r(U_2)\subset O_1\cup O_2$ are disjoint. Using \autoref{lem-elementary-K}, we get 
	\[[1_{O_1}]+[1_{O_2}]=[1_{s(U_1O_1)}]+[1_{s(U_2O_2)}]=[1_{r(U_1O_1)}]+[1_{r(U_2O_2)}]=[1_{r(U_1O_1)\cup r(U_2O_2)}].\]
	Next, we prove that $S$ is closed under inversion. 
	Let $O\subset G^{(0)}$ be a compact open subset. 
	By paradoxical comparison, there are compact open bisections $U_1,U_2\subset G$ such that $s(U_1)= s(U_2)=O$ and $r(U_1), r(U_2)\subset O$ are disjoint. Using \autoref{lem-elementary-K}, we get 
	\[[1_O]=[1_{r(U_1)}]+[1_{r(U_2)}]+[1_{O\setminus (r(U_1)\cup r(U_2))}]=[1_O]+[1_O]+[1_{O\setminus (r(U_1)\cup r(U_2))}],\]
	and thus 
	\[-[1_O]=[1_{O\setminus (r(U_1)\cup r(U_2))}].\]
\end{proof}

We briefly recall the construction and notation from \cite{Wu2025}. Wu considers a certain row-finite directed tree $X_{\mathcal G_+,x}$ equipped with an action of a discrete group $\Gamma$. The boundary $\partial X_{\mathcal G_+,x}$ is defined as equivalence classes of infinite directed paths $e_1e_2e_3\dotsb$ in $X_{\mathcal G_+,x}$ with $r(e_{i+1})=s(e_i)$ for all $i$ where two paths are equivalent if they agree after deleting finite initial segments. The basic compact open subsets of $\partial X_{\mathcal G_+,x}$ are given by 
\[Z_\partial (v)=\{[e_1e_2e_3\dotsb]\mid r(e_1)=v\},\]
for vertices $v\in X_{\mathcal G_+,x}$ (see also \cite[{\S}2.3]{Brownlowe2025}). 
We need the following elementary lemma:
\begin{lemma}\label{lem-cylinder}
	Every compact open subset of $\partial X_{\mathcal G_+,x}$ can be written as a finite disjoint union of sets of the form $Z_{\partial}(v)$.
\end{lemma}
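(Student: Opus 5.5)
The plan is the standard argument for cylinder sets in boundaries of row-finite graphs. First I will record three facts about the sets $Z_\partial(v)$.

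\emph{Basis.} The sets $Z_\partial(v)$, for vertices $v$, form a basis of compact open sets for the topology of $\partial X_{\mathcal G_+,x}$; this is how the topology is set up in \cite{Wu2025} and \cite[\S2.3]{Brownlowe2025}.

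\emph{Refinement.} For every vertex $v$ we have the finite disjoint decomposition
\[Z_\partial(v)=\bigsqcup_{e\colon r(e)=v} Z_\partial(s(e)).\]
The union is finite by row-finiteness. It is disjoint because in a directed tree the path from $v$ to any given vertex is unique. Iterating, $Z_\partial(v)$ is the disjoint union of the sets $Z_\partial(w)$ over the finitely many $w$ at distance $n$ below $v$.

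\emph{Nested or disjoint.} Any two cylinders are either nested or disjoint. If $[\xi]\in Z_\partial(v)\cap Z_\partial(w)$, then the tails of the representing paths from $v$ and from $w$ eventually coincide. By the tree structure, one of $v,w$ then lies on the ray from the other, which gives nestedness.

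Now let $O$ be compact open. Since the cylinders form a basis, $O$ is a union of cylinders, and by compactness $O=Z_\partial(v_1)\cup\dots\cup Z_\partial(v_m)$. By the nested-or-disjoint dichotomy, I will discard every $Z_\partial(v_i)$ that is contained in another one. The remaining cylinders are pairwise disjoint, which gives the desired decomposition.

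If the dichotomy turns out to be awkward because of the equivalence relation on tails (for instance, if the tree is not rooted), I will use a fallback. Each pair $v_i,v_j$ has some common refinement level. Using the refinement step, I will replace every $Z_\partial(v_i)$ by its descendants at a common depth relative to a fixed reference vertex. At that depth, distinct cylinders are disjoint or equal, so removing duplicates again gives a disjoint union.

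The main obstacle is the nested-or-disjoint claim, equivalently disjointness in the refinement step. Both depend on the boundary equivalence relation and on the precise tree structure of $X_{\mathcal G_+,x}$ from \cite{Wu2025}. I would handle this by showing that $[\xi]\in Z_\partial(v)$ if and only if $v$ lies on some ray representing $\xi$, and that any two such rays eventually merge.
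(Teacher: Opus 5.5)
Your refinement step is fine: disjointness follows from uniqueness of directed paths in the tree. The gap is that the nested-or-disjoint dichotomy, on which your main argument rests, is false here.

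Points of $Z_\partial(v)$ are rays leaving $v$ against the edge orientation, and row-finiteness only bounds the edges \emph{entering} a vertex. So a vertex $u$ may have two outgoing edges $f\neq f'$. Put $x=r(f)$ and $y=r(f')$. Then $Z_\partial(u)\subseteq Z_\partial(x)\cap Z_\partial(y)$. But if $x$ receives a further edge $g\neq f$, no ray leaving $x$ through $g$ lies in $Z_\partial(y)$: merging it with a ray from $y$ would produce a reduced path from $x$ to $y$ in the underlying tree different from the geodesic through $f$, $u$, $f'$. The symmetric statement holds for $y$. So the two cylinders overlap without being nested. This is exactly the configuration in \autoref{fig-paths}, where $Z_\partial(v)\cap Z_\partial(w)=Z_\partial(v\vee w)$ with $v\vee w$ strictly upstream of both $v$ and $w$. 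Your inference only shows that the two rays merge at \emph{some} vertex, not that this vertex is $v$ or $w$. For $O=Z_\partial(x)\cup Z_\partial(y)$, discarding contained cylinders leaves two overlapping sets.

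Your fallback fails as stated for the same reason: $x$ and $y$ sit at the same depth, so at a common depth distinct cylinders need not be disjoint or equal. It can be rescued, but only with the ingredient your proposal lacks, namely the description of overlaps. If nonempty, $Z_\partial(v)\cap Z_\partial(w)$ equals $Z_\partial(v\vee w)$, with $v\vee w$ the nearest common upstream vertex \cite[Proposition~4.4]{Brownlowe2025}. Take a height function with $h(s(e))=h(r(e))+1$, which exists because the underlying graph is a tree. Refine every $Z_\partial(v_i)$ to a height $N\geq \max_i h(v_i)$ with $N\geq h(v_i\vee v_j)$ for every intersecting pair. A tree-geodesic argument then shows that a piece above $v_i$ and a piece above $v_j$ at height $N$ either coincide (both lie on the directed path shared by $v_i$ and $v_j$ above $v_i\vee v_j$) or are disjoint. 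Removing duplicates gives the decomposition.

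The paper uses the same join property differently. It shows cylinders are closed under intersection, and that $Z_\partial(v)\setminus Z_\partial(w)$ is the finite disjoint union of the $Z_\partial(s(e))$ over edges $e$ not on the path from $v\vee w$ to $v$ but entering a vertex of it. It then disjointifies $Z_\partial(v_1)\cup\dots\cup Z_\partial(v_m)$ inductively.
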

\begin{proof}
	Since every compact open subset of $\partial X_{\mathcal G_+,x}$ can be covered by finitely many sets of the form $Z_\partial (v)$, it suffices to show that sets of the form $Z_\partial (v)$ are stable under intersection and relative complements.
	Let $v,w\in X_{\mathcal G_+,x}$ be vertices. 
	It follows from \cite[Proposition 4.4]{Brownlowe2025} that 
	$Z_\partial(v)\cap Z_\partial(w)$
	is either empty or given by $Z_\partial(v\vee w)$ where $v\vee w$ is the vertex with the minimal distance to $v$ (and $w$) which has paths to both $v$ and $w$. 
	
	We proceed by considering the set $Z_\partial(v)\setminus Z_\partial (w)$. Without loss of generality, we can assume that $Z_\partial(v)\cap Z_\partial(w)=Z_\partial(v\vee w)$. It follows that 
	\begin{equation}\label{eq-paths}
	Z_\partial(v)\setminus Z_\partial (w)=\bigsqcup_e Z_\partial (s(e)),
	\end{equation}
	where $e$ runs over all edges satisfying $r(e)\in \{r(e_1),\dotsc,r(e_n)\}$ and $e_1\dotsb e_n$ is the unique directed path with $r(e_1)=v$ and $s(e_n)=v\vee w$ (see \autoref{fig-paths}). 

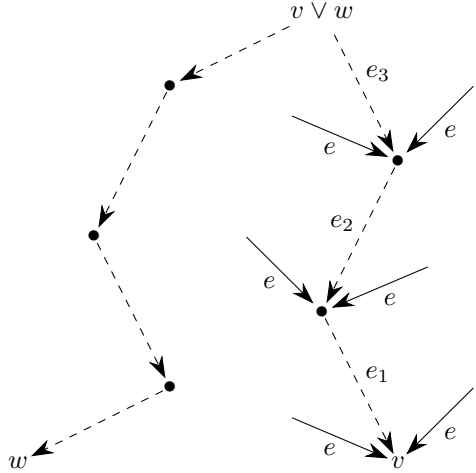
\begin{figure}[H]
\begin{tikzpicture}[
    >={Stealth[length=3mm, width=2mm]},
    every node/.style={ minimum size=0mm, inner sep=0pt}
]
\node (u0) at (0,0) {$v\vee w$};
\node (u1) at (-2,-1){$\bullet$};
\node (u2) at (-3,-3){$\bullet$};
\node (u3) at (-2,-5){$\bullet$};
\node (u4) at (-4,-6){$w$};
\node (u5) at (1,-2){$\bullet$};
\node (u6) at (0,-4){$\bullet$};
\node (u7) at (1,-6){$v$};

\node (p1) at (-0.4,-1.4){};
\node (p2) at (2,-1){};
\node (p3) at (-1,-3){};
\node (p4) at (1.4,-3.4){};
\node (p5) at (-0.4,-5.4){};
\node (p6) at (2,-5){};

 \draw[dashed,->, shorten <=7pt, shorten >=1pt] 
 	(u0) to  node[above right, outer sep=2pt]{}(u1);
 \draw[dashed,->, shorten >=1pt ] 
 	(u1) to  node[above right, outer sep=2pt]{}(u2);
 \draw[dashed,-> , shorten >=1pt] 
 	(u2) to  node[above right, outer sep=2pt]{}(u3);
 \draw[dashed,-> , shorten >=1pt] 
 	(u3) to  node[above right, outer sep=2pt]{}(u4);

 \draw[dashed,-> , shorten <=7pt,  shorten >=1pt ] 
 	(u0) to  node[above right, outer sep=2pt]{$e_3$}(u5);
 \draw[dashed,->,  shorten >=1pt  ] 
 	(u5) to  node[above left, outer sep=2pt]{$e_2$}(u6);
 \draw[dashed,->, shorten >=1pt ] 
 	(u6) to  node[above right, outer sep=2pt]{$e_1$}(u7);

\draw[ ->, shorten >=1pt  ] 
 	(p1) to  node[below left , outer sep=2pt]{$e$}(u5);
	\draw[ ->, shorten >=1pt  ] 
 	(p2) to  node[below right, outer sep=2pt]{$e$}(u5);
\draw[ ->, shorten >=1pt  ] 
 	(p3) to  node[below left, outer sep=2pt]{$e$}(u6);
\draw[ -> , shorten >=1pt ] 
 	(p4) to  node[below right, outer sep=2pt]{$e$}(u6);
	\draw[ ->, shorten >=1pt  ] 
 	(p5) to  node[below left, outer sep=2pt]{$e$}(u7);
	\draw[ ->, shorten >=1pt  ] 
 	(p6) to  node[below right, outer sep=2pt]{$e$}(u7);
\end{tikzpicture}
\caption{An illustration of the edges $e$ appearing in \eqref{eq-paths}.}
\label{fig-paths}
\end{figure}
\end{proof}

\begin{theorem}\label{thm-kirchberg}
	Let $A$ be a unital UCT Kirchberg algebra. Then there is a discrete group $\Gamma$ and a partial action $\Gamma\curvearrowright X$ on the Cantor set with clopen domains such that $A\cong C^*(\Gamma\ltimes X)$. 
\end{theorem}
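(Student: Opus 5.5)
The plan is to realize $A$ as a corner of a stable UCT Kirchberg algebra built from one of Wu's global actions, and then to reduce that action to a compact open subset. By the Kirchberg--Phillips classification, $A$ is determined by $(K_0(A),[1_A],K_1(A))$. I would apply \cite{Wu2025} to obtain a global action $\Gamma\curvearrowright X:=\partial X_{\mathcal G_+,x}$ on a totally disconnected, locally compact, non-compact space such that
\[
B:=C_0(X)\rtimes_r\Gamma\cong A\otimes\mathcal K .
\]
Fix an isomorphism $\theta\colon K_0(B)\to K_0(A)$ and set $g:=\theta^{-1}([1_A])$. Suppose we can find a nonempty compact open $Y\subset X$ with $[1_Y]=g$. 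Then the reduction $(\Gamma\ltimes X)(Y)$ is the transformation groupoid of the restricted partial action $\Gamma\curvearrowright Y$, and its domains $Y\cap \gamma^{-1}Y$ are clopen. Its $C^*$-algebra is the unital corner $1_YB1_Y$, which is again a Kirchberg algebra satisfying the UCT. By \autoref{lem-full-corner} the inclusion induces isomorphisms on $K_*$ that send the unit to $g$. Classification then gives $1_YB1_Y\cong A$. Finally, $Y$ is compact, totally disconnected and metrizable, and it has no isolated points: a simple purely infinite algebra cannot have an isolated point in the unit space of a minimal topologically free groupoid. So $Y$ is the Cantor set.

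It remains to show that every class in $K_0(B)$ has the form $[1_Y]$. I would split this into two steps.

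\textbf{Step 1 (generators).} Following the computations in \cite{Brownlowe2025}, I would show that $K_0(B)$ is generated by the classes $[1_{Z_\partial(v)}]$ for vertices $v$. Their description of $K_0$ passes through a Pimsner--Voiculescu / Cuntz--Krieger type exact sequence whose $K_0$ term is a quotient of $\bigoplus_{v}\Z$, with the generators mapping to the cylinder projections. \autoref{lem-cylinder} guarantees that these cylinders exhaust all compact open sets up to disjoint unions.

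\textbf{Step 2 (paradoxical comparison).} I would prove that $\Gamma\ltimes X$ has paradoxical comparison. By \autoref{lem-cylinder} it suffices to treat $O=Z_\partial(v)$, since bisections for disjoint pieces can be assembled. For a cylinder, the tree structure should help. Pure infiniteness of Wu's model comes from the graph $\mathcal G_+$ having vertices of large valence, and the $\Gamma$-action is transitive enough on edges. Together these should yield two group elements $\gamma_1,\gamma_2$ that map $Z_\partial(v)$ homeomorphically onto disjoint subcylinders $Z_\partial(w_1),Z_\partial(w_2)\subset Z_\partial(v)$. The bisections are then $U_i=\{\gamma_i\}\times Z_\partial(v)$.

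Given both steps, \autoref{lem-paradoxical} shows that the set of classes $[1_O]$ is a subgroup containing all the generators, hence equal to $K_0(B)$. I expect the main obstacle to be making Step 2 precise, together with the identification of generators in Step 1, since both require unpacking Wu's specific tree and group action. If explicit $\gamma_i$ are hard to exhibit, a fallback is to use pure infiniteness of $B$. Every nonzero projection $1_O$ is properly infinite, and since $B$ has a Cartan subalgebra with totally disconnected spectrum one might hope to realize the relevant Murray--von Neumann subequivalences by compact open bisections. This requires care, and I would try the direct tree argument first.
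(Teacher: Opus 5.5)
Your outline follows the paper's proof step for step:
\begin{itemize}
\item Wu's boundary action with $C^*(\Gamma\ltimes\partial X_{\mathcal G_+,x})\cong A\otimes\mathbb K$;
\item generation of $K_0$ by the classes $[1_{Z_\partial(v)}]$ via \cite{Brownlowe2025};
\item paradoxical comparison, checked on cylinders via \autoref{lem-cylinder};
\item \autoref{lem-paradoxical} to realize $[1_A]$ as some $[1_Y]$;
\item the reduction to $Y$, \autoref{lem-full-corner} and Kirchberg--Phillips.
\end{itemize}
Your no-isolated-points argument via minimal projections is a valid substitute for the paper's one-line appeal to paradoxical comparison.

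The genuine gap is Step~2. You give no argument there, and the heuristic you offer is not what makes it work. Large valence at $v$ only splits $Z_\partial(v)$ into subcylinders $Z_\partial(s(e))$. Elements of the stabilizer of $v$, however transitively they act on edges at $v$, map $Z_\partial(v)$ onto itself rather than into a proper subcylinder. What you need is an element $\gamma$ with $\gamma v$ strictly below $v$. That means a directed path $v\to\gamma v$ in the tree, which is the same as a loop at the image of $v$ in the quotient graph $\Gamma\backslash X_{\mathcal G_+,x}$. You need two such loops whose lifts branch.

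The paper gets this from \cite[Remark~6.13]{Wu2025}: the quotient graph has at least two independent loops at every vertex. Lifting them at $v$ gives directed paths $v\to\gamma_1v$ and $v\to\gamma_2v$, so $\gamma_iZ_\partial(v)=Z_\partial(\gamma_iv)\subset Z_\partial(v)$. Neither loop extends the other, so neither lifted path is a prefix of the other. Hence they leave some vertex along distinct edges, and in a directed tree this makes $Z_\partial(\gamma_1v)$ and $Z_\partial(\gamma_2v)$ disjoint. With this input your bisections $U_i=\{\gamma_i\}\times Z_\partial(v)$ work as stated.

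Drop your fallback rather than keep it in reserve. Pure infiniteness of $B$ gives subequivalences between the projections $1_O$ implemented by partial isometries of $B$. Nothing forces these to be implemented by compact open bisections of $\Gamma\ltimes X$, so this does not yield paradoxical comparison. Passing from pure infiniteness of the $C^*$-algebra back to groupoid-level comparison is exactly what is not available in general.

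Finally, a small point that the paper also passes over: if $[1_A]=0$ (e.g.\ $A=\mathcal O_2$), \autoref{lem-paradoxical} may give you $Y=\emptyset$. This is easily fixed. The addition step in that lemma produces a nonempty set as soon as one summand is nonempty. So pick a nonempty compact open $P$ and $P'$ with $[1_{P'}]=-[1_P]$, and add $[1_P]+[1_{P'}]=0$ to your representative to obtain a nonempty one.
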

\begin{proof}
	As pointed out above, we use the construction and notation from \cite{Wu2025}.
	Let $B \coloneqq A\otimes \mathbb{K}$ be the stabilization of $A$, which is a stable UCT Kirchberg algebra with $K_*(B) \cong K_*(A)$.
	By \cite[Theorem~6.2]{Wu2025}, the $K$-theory of $B$ can be realized by the combinatorial $C^*$-algebra of a directed graph of groups. 
	Following \cite[Construction~6.9]{Wu2025}, this combinatorial data gives rise to a discrete group $\Gamma$ acting on a directed tree $X_{\mathcal G_+,x}$ with an associated boundary action $\Gamma\curvearrowright \partial X_{\mathcal G_+,x}$ such that the global crossed product $C^*(\Gamma\ltimes \partial X_{\mathcal G_+,x})$ is Morita equivalent (and thus isomorphic, by stability) to this combinatorial $C^*$-algebra. 
	Consequently, $C^*(\Gamma\ltimes \partial X_{\mathcal G_+,x})$ is a stable UCT Kirchberg algebra with 
	\[K_*(C^*(\Gamma\ltimes \partial X_{\mathcal G_+,x}))\cong K_*(B) \cong K_*(A).\]
	We claim that
	\begin{claim}
	There is a compact open subset $Y\subset \partial X_{\mathcal G_+,x}$ such that the above isomorphism identifies $[1_Y]\in K_0(C^*(\Gamma\ltimes \partial X_{\mathcal G_+,x}))$ with $[1_A]\in K_0(A)$.
	\end{claim} 
	We first finish the proof of the theorem assuming that the claim is true.
	Let $\Gamma\curvearrowright Y$ be the partial action given by restricting the action $\Gamma\curvearrowright \partial X_{\mathcal G_+,x}$ to the domains $D_\gamma\coloneqq Y\cap \gamma(Y)\subset Y$ for $\gamma\in \Gamma$. 
	The canonical $*$-isomorphism $C_c(\Gamma\ltimes Y)\cong 1_Y C_c(\Gamma\ltimes \partial X_{\mathcal G_+,x})1_Y$ extends to a surjective $*$-homomorphism 
	\[\phi\colon C^*(\Gamma\ltimes Y)\twoheadrightarrow 1_Y C^*(\Gamma\ltimes \partial X_{\mathcal G_+,x})1_Y.\]
	As noted in \cite[Proposition~6.6]{Wu2025}, the action $\Gamma\curvearrowright \partial X_{\mathcal G_+,x}$ is minimal. Since $Y$ is a non-empty open set, we have $\partial X_{\mathcal G_+,x} = \bigcup_{\gamma\in \Gamma} \gamma(Y)$, which implies that the global action on $\partial X_{\mathcal G_+,x}$ is the enveloping action of the partial action $\Gamma\curvearrowright Y$. It then follows from \cite[Theorem~4.18]{Abadie2004} that $\phi$ is an isomorphism. Moreover, the fact that the translates of $Y$ cover the entire space implies that $1_Y$ is a full projection in $C^*(\Gamma\ltimes \partial X_{\mathcal G_+,x})$.

	Furthermore, by the proof of \cite[Proposition~6.14]{Wu2025}, $\Gamma\ltimes \partial X_{\mathcal G_+,x}$ is topologically principal and amenable. 
	Being a reduction of the original groupoid, $\Gamma\ltimes Y=(\Gamma\ltimes \partial X_{\mathcal G_+,x})(Y)$ enjoys these properties as well, so that $C^*(\Gamma\ltimes Y)$ is simple by \cite{Brown2014}.
	Being a corner in a UCT Kirchberg algebra, $C^*(\Gamma\ltimes Y)$ is thus a UCT Kirchberg algebra itself (see for instance \cite{Roerdam2002}).
	By \autoref{lem-full-corner}, the map
	\[C^*(\Gamma\ltimes Y)\xrightarrow[\cong]{\phi} 1_Y C^*(\Gamma\ltimes \partial X_{\mathcal G_+,x})1_Y\hookrightarrow C^*(\Gamma\ltimes \partial X_{\mathcal G_+,x})\]
	induces an isomorphism 
	\[K_*(C^*(\Gamma\ltimes Y))\cong K_*(C^*(\Gamma\ltimes \partial X_{\mathcal G_+,x}))\cong K_*(A),\]
	which by construction identifies $[1_Y]\in K_0(C^*(\Gamma\ltimes Y))$ with $[1_A]\in K_0(A)$. 
	It follows from the Kirchberg--Phillips classification theorem \cite{Kirchberg,Phillips2000} that $C^*(\Gamma\ltimes Y)\cong A$.

	We proceed with proving the claim.
	Note that \cite[Remark~6.8, Proposition~6.11(2)]{Wu2025} combined with the proofs of \cite[Proposition~6.5]{Wu2025} and \cite[Theorem~4.19]{Brownlowe2025} imply that $K_0(C^*(\Gamma\ltimes \partial X_{\mathcal G_+,x}))$ is generated by classes of the form $[1_{Z_\partial (v)}]$ where $v$ ranges over all vertices of $\partial X_{\mathcal G_+,x}$.
	In view of \autoref{lem-paradoxical}, this will imply the claim once we show that $\Gamma\ltimes \partial X_{\mathcal G_+,x}$ has paradoxical comparison. 
	Since every compact open subset of $\partial X_{\mathcal G_+,x}$ is a disjoint union of cylinder sets $Z_\partial (v)$ for vertices $v\in X_{\mathcal G_+,x}$ (see \autoref{lem-cylinder} above), it suffices to show that for every such $Z_\partial (v)$, there are group elements $\gamma_1,\gamma_2\in \Gamma$ such that $\gamma_1(Z_\partial (v))$ and $\gamma_2(Z_\partial (v))$ are disjoint subsets of $Z_\partial (v)$.
	This in turn follows from the fact that the quotient graph $\Gamma\backslash X_{\mathcal G_+,x}$ contains at least two independent\footnote{We call two loops $\eta$ and $\eta'$ \emph{independent} if none of the two is an extension of the other, i.e. $\eta\not=\eta'\nu$ and $\eta'\not=\eta\nu$ for all paths $\nu$.} loops at every vertex (see \cite[Remark~6.13]{Wu2025}). Indeed, two independent loops at the image of $v$ in $\Gamma\backslash X_{\mathcal G_+,x}$ lift to directed paths in the tree $X_{\mathcal G_+,x}$ from $v$ to $\gamma_1 v$ and $\gamma_2 v$ for some $\gamma_1, \gamma_2 \in \Gamma$. Since the tree is directed, the corresponding cylinder sets $Z_\partial(\gamma_1 v) = \gamma_1(Z_\partial(v))$ and $Z_\partial(\gamma_2 v) = \gamma_2(Z_\partial(v))$ are disjoint and both are strictly contained in $Z_\partial(v)$.

	Finally, we observe that $Y$ is homeomorphic to the Cantor set: it is compact, second countable, and totally disconnected by construction, and it has no isolated points because the groupoid $\Gamma\ltimes Y$ has paradoxical comparison.
\end{proof}

\begin{rem}
	We would like to point out that a construction similar to the one from \cite{Wu2025} that we employed here was recently used in \cite{Ma2026} to produce many examples of unital UCT Kirchberg algebras which arise as crossed products of global group actions on compact spaces. We do not know if the examples from \cite{Ma2026} exhaust all UCT Kirchberg algebras. 
\end{rem}

\bibliography{references.bib}

@Article{AnantharamanDelaroche,
  author        = {Claire Anantharaman--Delaroche},
  title         = {Groupoid exactness and the weak containment problem},
  year          = {2026},
  note          = {Preprint},
  archiveprefix = {arXiv},
  eprint        = {1605.05117},
  primaryclass  = {math.OA},
}

@article {Abadie2004,
    AUTHOR = {Abadie, Fernando},
     TITLE = {On partial actions and groupoids},
   JOURNAL = {Proc. Amer. Math. Soc.},
  FJOURNAL = {Proceedings of the American Mathematical Society},
    VOLUME = {132},
      YEAR = {2004},
    NUMBER = {4},
     PAGES = {1037--1047},
      ISSN = {0002-9939,1088-6826},
   MRCLASS = {46L05 (46L55)},
  MRNUMBER = {2045419},
MRREVIEWER = {Qing\ Xiang\ Xu},
       DOI = {10.1090/S0002-9939-03-07300-3},
       URL = {https://doi.org/10.1090/S0002-9939-03-07300-3},
}

@Article{Higson2002,
  author     = {Higson, Nigel and Lafforgue, Vincent and Skandalis, Georges},
  journal    = {Geom. Funct. Anal.},
  title      = {Counterexamples to the {B}aum-{C}onnes conjecture},
  year       = {2002},
  issn       = {1016-443X,1420-8970},
  number     = {2},
  pages      = {330--354},
  volume     = {12},
  doi        = {10.1007/s00039-002-8249-5},
  fjournal   = {Geometric and Functional Analysis},
  mrclass    = {19K56 (22A22 46L80 46L85 58J22)},
  mrnumber   = {1911663},
  mrreviewer = {Alain\ Valette},
  url        = {https://doi.org/10.1007/s00039-002-8249-5},
}

@Article{Alekseev2018,
  author   = {Alekseev, Vadim and Finn-Sell, Martin},
  journal  = {Int. Math. Res. Not. IMRN},
  title    = {Non-amenable principal groupoids with weak containment},
  year     = {2018},
  issn     = {1073-7928,1687-0247},
  number   = {8},
  pages    = {2332--2340},
  doi      = {10.1093/imrn/rnw305},
  fjournal = {International Mathematics Research Notices. IMRN},
  mrclass  = {22E47 (20L05)},
  mrnumber = {3801485},
  url      = {https://doi.org/10.1093/imrn/rnw305},
}

@Article{Willett2015,
  author     = {Willett, Rufus},
  journal    = {M\"unster J. Math.},
  title      = {A non-amenable groupoid whose maximal and reduced {$C^*$}-algebras are the same},
  year       = {2015},
  issn       = {1867-5778,1867-5786},
  number     = {1},
  pages      = {241--252},
  volume     = {8},
  doi        = {10.17879/65219671638},
  fjournal   = {M\"unster Journal of Mathematics},
  mrclass    = {46L55 (22A22 22D25 46L05)},
  mrnumber   = {3549528},
  mrreviewer = {Jean\ N.\ Renault},
  url        = {https://doi.org/10.17879/65219671638},
}

@article {CFH-Kirchberg,
    AUTHOR = {Clark, Lisa Orloff and Fletcher, James and an Huef, Astrid},
     TITLE = {All classifiable {K}irchberg algebras are {$C^*$}-algebras of
              ample groupoids},
   JOURNAL = {Expo. Math.},
  FJOURNAL = {Expositiones Mathematicae},
    VOLUME = {38},
      YEAR = {2020},
    NUMBER = {4},
     PAGES = {559--565},
      ISSN = {0723-0869,1878-0792},
   MRCLASS = {46L05 (46L35)},
  MRNUMBER = {4177957},
       DOI = {10.1016/j.exmath.2019.06.001},
       URL = {https://doi.org/10.1016/j.exmath.2019.06.001},
}

@Book{Roe2003,
  author     = {Roe, John},
  publisher  = {American Mathematical Society, Providence, RI},
  title      = {Lectures on coarse geometry},
  year       = {2003},
  isbn       = {0-8218-3332-4},
  series     = {University Lecture Series},
  volume     = {31},
  doi        = {10.1090/ulect/031},
  mrclass    = {53C24 (20F65 46L05 51K05 54E25)},
  mrnumber   = {2007488},
  mrreviewer = {Jean-Louis\ Tu},
  pages      = {viii+175},
  url        = {https://doi.org/10.1090/ulect/031},
}

@Book{Renault1980,
  author     = {Renault, Jean},
  publisher  = {Springer, Berlin},
  title      = {A groupoid approach to {$C^*$}-algebras},
  year       = {1980},
  isbn       = {3-540-09977-8},
  series     = {Lecture Notes in Mathematics},
  volume     = {793},
  doi        = {10.1007/BFb0091072},
  mrclass    = {46Lxx (22D25 22D40)},
  mrnumber   = {584266},
  mrreviewer = {A.\ K.\ Seda},
  pages      = {ii+160},
}

@Book{AnantharamanDelaroche2000,
  author     = {Anantharaman-Delaroche, Claire and Renault, Jean},
  publisher  = {L'Enseignement Math\'ematique, Geneva},
  title      = {Amenable groupoids},
  year       = {2000},
  isbn       = {2-940264-01-5},
  note       = {With a foreword by Georges Skandalis and Appendix B by E. Germain},
  series     = {Monographies de L'Enseignement Math\'ematique [Monographs of L'Enseignement Math\'ematique]},
  volume     = {36},
  mrclass    = {22A22 (22D25 43A07 46L05 46L10 46L80)},
  mrnumber   = {1799683},
  mrreviewer = {Robert\ S.\ Doran},
  pages      = {196},
}

@Article{Barlak-Li-I,
  author     = {Barlak, Sel\c{c}uk and Li, Xin},
  journal    = {Adv. Math.},
  title      = {Cartan subalgebras and the {UCT} problem},
  year       = {2017},
  issn       = {0001-8708,1090-2082},
  pages      = {748--769},
  volume     = {316},
  doi        = {10.1016/j.aim.2017.06.024},
  fjournal   = {Advances in Mathematics},
  mrclass    = {46L10},
  mrnumber   = {3672919},
  mrreviewer = {Xiao\ Chen},
  url        = {https://doi.org/10.1016/j.aim.2017.06.024},
}

@Book{Exel2017,
  author     = {Exel, Ruy},
  publisher  = {American Mathematical Society, Providence, RI},
  title      = {Partial dynamical systems, {F}ell bundles and applications},
  year       = {2017},
  isbn       = {978-1-4704-3785-5},
  series     = {Mathematical Surveys and Monographs},
  volume     = {224},
  doi        = {10.1090/surv/224},
  mrclass    = {46L55 (16S35 16S40 37A55 46-02 46L45)},
  mrnumber   = {3699795},
  mrreviewer = {Fernando\ Abadie},
  pages      = {vi+321},
  url        = {https://doi.org/10.1090/surv/224},
}

@Article{Gardella2023,
  author     = {Gardella, Eusebio and Geffen, Shirly and Kranz, Julian and Naryshkin, Petr},
  journal    = {J. Reine Angew. Math.},
  title      = {Classifiability of crossed products by nonamenable groups},
  year       = {2023},
  issn       = {0075-4102,1435-5345},
  pages      = {285--312},
  volume     = {797},
  doi        = {10.1515/crelle-2023-0012},
  fjournal   = {Journal f\"ur die Reine und Angewandte Mathematik. [Crelle's Journal]},
  mrclass    = {46L35 (20F65 37A55 46L55 46L80)},
  mrnumber   = {4565952},
  mrreviewer = {Jia-jie\ Hua},
  url        = {https://doi.org/10.1515/crelle-2023-0012},
}

@Article{deCastroKang2024,
  author        = {Gilles G. de Castro and Eun Ji Kang},
  title         = {A Categorical Interpretation of Continuous Orbit Equivalence for Partial Dynamical Systems},
  year          = {2024},
  note          = {Preprint},
  archiveprefix = {arXiv},
  eprint        = {2412.03813},
  primaryclass  = {math.OA},
}

@Article{Exel-Starling,
  author     = {Exel, Ruy and Starling, Charles},
  journal    = {J. Operator Theory},
  title      = {Self-similar graph {$C^*$}-algebras and partial crossed products},
  year       = {2016},
  issn       = {0379-4024,1841-7744},
  number     = {2},
  pages      = {299--317},
  volume     = {75},
  doi        = {10.7900/jot.2015mar04.2072},
  fjournal   = {Journal of Operator Theory},
  mrclass    = {46L55 (20F65 20M18 22A22 46L35)},
  mrnumber   = {3509131},
  mrreviewer = {J\'an\ \v Spakula},
  url        = {https://doi.org/10.7900/jot.2015mar04.2072},
}

@misc{Steinberg2026,
  author       = {Benjamin Steinberg},
  title        = {Partial Actions of Free Groups and Groupoid Homology},
  year         = {2026},
  eprint       = {2602.15170},
  archivePrefix= {arXiv},
  primaryClass = {math.RA},
  url          = {https://arxiv.org/abs/2602.15170},
  note         = {Preprint}
}

@Article{Kumjian2000,
  author     = {Kumjian, Alex and Pask, David},
  journal    = {New York J. Math.},
  title      = {Higher rank graph {$C^*$}-algebras},
  year       = {2000},
  issn       = {1076-9803},
  pages      = {1--20},
  volume     = {6},
  fjournal   = {New York Journal of Mathematics},
  mrclass    = {46L35 (46L45)},
  mrnumber   = {1745529},
  mrreviewer = {Michael\ Frank},
  url        = {http://nyjm.albany.edu:8000/j/2000/6_1.html},
}

@Article{Renault:Cartan,
  author     = {Renault, Jean},
  journal    = {Irish Math. Soc. Bull.},
  title      = {Cartan subalgebras in {$C^*$}-algebras},
  year       = {2008},
  issn       = {0791-5578},
  number     = {61},
  pages      = {29--63},
  fjournal   = {Irish Mathematical Society Bulletin},
  mrclass    = {46L85 (37B99)},
  mrnumber   = {2460017},
  mrreviewer = {Benton\ L.\ Duncan},
  url        = {https://www.maths.tcd.ie/pub/ims/bull61/S6101.pdf},
}

@Article{Deaconu1995,
  author     = {Deaconu, Valentin},
  journal    = {Trans. Amer. Math. Soc.},
  title      = {Groupoids associated with endomorphisms},
  year       = {1995},
  issn       = {0002-9947,1088-6850},
  number     = {5},
  pages      = {1779--1786},
  volume     = {347},
  doi        = {10.2307/2154972},
  fjournal   = {Transactions of the American Mathematical Society},
  mrclass    = {46L55 (19K99 20L15 46L80 54H20 57N10)},
  mrnumber   = {1233967},
  mrreviewer = {Alain\ Valette},
  url        = {https://doi.org/10.2307/2154972},
}

@Article{Brodzki2007,
  author     = {Brodzki, Jacek and Niblo, Graham A. and Wright, Nick J.},
  journal    = {J. Lond. Math. Soc. (2)},
  title      = {Property {A}, partial translation structures, and uniform embeddings in groups},
  year       = {2007},
  issn       = {0024-6107,1469-7750},
  number     = {2},
  pages      = {479--497},
  volume     = {76},
  doi        = {10.1112/jlms/jdm066},
  fjournal   = {Journal of the London Mathematical Society. Second Series},
  mrclass    = {46L85 (20F65 54E35)},
  mrnumber   = {2363428},
  mrreviewer = {Narutaka\ Ozawa},
  url        = {https://doi.org/10.1112/jlms/jdm066},
}

@Article{AnantharamanDelaroche1997,
  author     = {Anantharaman-Delaroche, Claire},
  journal    = {Bull. Soc. Math. France},
  title      = {Purely infinite {$C^*$}-algebras arising from dynamical systems},
  year       = {1997},
  issn       = {0037-9484,2102-622X},
  number     = {2},
  pages      = {199--225},
  volume     = {125},
  fjournal   = {Bulletin de la Soci\'et\'e{} Math\'ematique de France},
  mrclass    = {46L55 (46L05 54H20 58F03)},
  mrnumber   = {1478030},
  mrreviewer = {Judith\ A.\ Packer},
  url        = {http://www.numdam.org/item?id=BSMF_1997__125_2_199_0},
}

@Article{Exel1994,
  author    = {Exel, Ruy},
  journal   = {Journal of functional analysis},
  title     = {Circle actions on {$C^*$}-algebras, partial automorphisms, and a generalized {P}imsner-{V}oiculescu exact sequence},
  year      = {1994},
  number    = {2},
  pages     = {361--401},
  volume    = {122},
  doi       = {10.1006/jfan.1994.1073},
  publisher = {Elsevier},
}

@article {Li:Classifiable-Cartan,
    AUTHOR = {Li, Xin},
     TITLE = {Every classifiable simple {$\rm C^*$}-algebra has a {C}artan
              subalgebra},
   JOURNAL = {Invent. Math.},
  FJOURNAL = {Inventiones Mathematicae},
    VOLUME = {219},
      YEAR = {2020},
    NUMBER = {2},
     PAGES = {653--699},
      ISSN = {0020-9910,1432-1297},
   MRCLASS = {46L35 (22A22)},
  MRNUMBER = {4054809},
MRREVIEWER = {James\ Gabe},
       DOI = {10.1007/s00222-019-00914-0},
       URL = {https://doi.org/10.1007/s00222-019-00914-0},
}

@Article{Strickland2009,
  author  = {Strickland, Neil P.},
  journal = {preprint},
  title   = {The category of CGWH spaces},
  year    = {2009},
  url     = {https://ncatlab.org/nlab/files/StricklandCGHWSpaces.pdf},
}

@Article{Sako2020,
  author    = {Sako, Hiroki},
  journal   = {Journal of the London Mathematical Society},
  title     = {Finite-dimensional approximation properties for uniform {R}oe algebras},
  year      = {2020},
  number    = {2},
  pages     = {623--644},
  volume    = {102},
  doi       = {10.1112/jlms.12330},
  publisher = {Wiley Online Library},
}

@Article{Yu2000,
  author    = {Yu, Guoliang},
  journal   = {Inventiones mathematicae},
  title     = {The coarse {B}aum--{C}onnes conjecture for spaces which admit a uniform embedding into {H}ilbert space},
  year      = {2000},
  number    = {1},
  pages     = {201--240},
  volume    = {139},
  doi       = {10.1007/s002229900032},
  publisher = {Springer},
}

@Article{Ozawa2000,
  author    = {Ozawa, Narutaka},
  journal   = {Comptes Rendus de l'Acad{\'e}mie des Sciences-Series I-Mathematics},
  title     = {Amenable actions and exactness for discrete groups},
  year      = {2000},
  number    = {8},
  pages     = {691--695},
  volume    = {330},
  doi       = {10.1016/S0764-4442(00)00248-2},
  publisher = {Elsevier},
}

@Article{Li2017,
  author     = {Li, Xin},
  journal    = {Int. Math. Res. Not. IMRN},
  title      = {Partial transformation groupoids attached to graphs and semigroups},
  year       = {2017},
  issn       = {1073-7928,1687-0247},
  number     = {17},
  pages      = {5233--5259},
  doi        = {10.1093/imrn/rnw166},
  fjournal   = {International Mathematics Research Notices. IMRN},
  mrclass    = {37A20 (05C25 37A55)},
  mrnumber   = {3694599},
  mrreviewer = {Siming\ Tu},
  url        = {https://doi.org/10.1093/imrn/rnw166},
}

@Article{Spielberg2007,
  author    = {Spielberg, Jack},
  journal   = {Journal of Operator Theory},
  title     = {Graph-based models for {K}irchberg algebras},
  year      = {2007},
  pages     = {347--374},
  publisher = {JSTOR},
  url       = {https://jot.theta.ro/jot/archive/2007-057-002/2007-057-002-007.pdf},
}

@Article{Exel2011,
  author     = {Exel, Ruy and an Huef, Astrid and Raeburn, Iain},
  journal    = {Indiana Univ. Math. J.},
  title      = {Purely infinite simple {$C^*$}-algebras associated to integer dilation matrices},
  year       = {2011},
  issn       = {0022-2518,1943-5258},
  number     = {3},
  pages      = {1033--1058},
  volume     = {60},
  doi        = {10.1512/iumj.2011.60.4331},
  fjournal   = {Indiana University Mathematics Journal},
  mrclass    = {46L05 (46L55)},
  mrnumber   = {2985865},
  mrreviewer = {Xin\ Li},
  url        = {https://doi.org/10.1512/iumj.2011.60.4331},
}

@Article{Phillips2000,
  author     = {Phillips, N. Christopher},
  journal    = {Doc. Math.},
  title      = {A classification theorem for nuclear purely infinite simple {$C^*$}-algebras},
  year       = {2000},
  issn       = {1431-0635,1431-0643},
  pages      = {49--114},
  volume     = {5},
  doi        = {10.4171/DM/75},
  fjournal   = {Documenta Mathematica},
  mrclass    = {46L05 (19K56 46L35 46L80)},
  mrnumber   = {1745197},
  mrreviewer = {Mikael\ R\o rdam},
}

@Article{Kirchberg,
  author  = {Eberhard Kirchberg},
  journal = {Preprint},
  title   = {The classification of purely infinite ${C^*}$-algebras using {K}asparov’s theory.},
  year    = {1994},
}

@Article{Pask2004,
  author     = {Pask, David and Quigg, John and Raeburn, Iain},
  journal    = {New York J. Math.},
  title      = {Fundamental groupoids of {$k$}-graphs},
  year       = {2004},
  issn       = {1076-9803},
  pages      = {195--207},
  volume     = {10},
  fjournal   = {New York Journal of Mathematics},
  mrclass    = {05C20 (14H30 46L05)},
  mrnumber   = {2114786},
  mrreviewer = {Alain\ Valette},
  url        = {http://nyjm.albany.edu:8000/j/2004/10_195.html},
}

@Book{Bridson1999,
  author     = {Bridson, Martin R. and Haefliger, Andr\'e},
  publisher  = {Springer-Verlag, Berlin},
  title      = {Metric spaces of non-positive curvature},
  year       = {1999},
  isbn       = {3-540-64324-9},
  series     = {Grundlehren der mathematischen Wissenschaften [Fundamental Principles of Mathematical Sciences]},
  volume     = {319},
  doi        = {10.1007/978-3-662-12494-9},
  mrclass    = {53C23 (20F65 53C70 57M07)},
  mrnumber   = {1744486},
  mrreviewer = {Athanase\ Papadopoulos},
  pages      = {xxii+643},
  url        = {https://doi.org/10.1007/978-3-662-12494-9},
}

@Article{Tu1999,
  author     = {Tu, Jean-Louis},
  journal    = {$K$-Theory},
  title      = {La conjecture de {B}aum-{C}onnes pour les feuilletages moyennables},
  year       = {1999},
  issn       = {0920-3036,1573-0514},
  number     = {3},
  pages      = {215--264},
  volume     = {17},
  doi        = {10.1023/A:1007744304422},
  fjournal   = {$K$-Theory. An Interdisciplinary Journal for the Development, Application, and Influence of $K$-Theory in the Mathematical Sciences},
  mrclass    = {19K35 (46L85 58J22)},
  mrnumber   = {1703305},
  mrreviewer = {Hiroshi\ Takai},
  url        = {https://doi.org/10.1023/A:1007744304422},
}

@InProceedings{Winter2018,
  author     = {Winter, Wilhelm},
  booktitle  = {Proceedings of the {I}nternational {C}ongress of {M}athematicians---{R}io de {J}aneiro 2018. {V}ol. {III}. {I}nvited lectures},
  title      = {Structure of nuclear {$C^*$}-algebras: from quasidiagonality to classification and back again},
  year       = {2018},
  pages      = {1801--1823},
  publisher  = {World Sci. Publ., Hackensack, NJ},
  doi        = {10.1142/9789813272880_0118},
  isbn       = {978-981-3272-92-7; 978-981-3272-87-3},
  mrclass    = {46L05 (43A07 46L35)},
  mrnumber   = {3966830},
  mrreviewer = {Maria\ Grazia\ Viola},
}

@InProceedings{White2023,
  author       = {White, Stuart},
  booktitle    = {International Congress of Mathematicians},
  title        = {Abstract classification theorems for amenable ${C^*}$-algebras},
  year         = {2023},
  organization = {European Mathematical Society-EMS-Publishing House GmbH},
  pages        = {3314--3338},
  doi          = {10.4171/ICM2022/183},
}

@Article{Rosenberg1987,
  author     = {Rosenberg, Jonathan and Schochet, Claude},
  journal    = {Duke Math. J.},
  title      = {The {K}\"unneth theorem and the universal coefficient theorem for {K}asparov's generalized {$K$}-functor},
  year       = {1987},
  issn       = {0012-7094,1547-7398},
  number     = {2},
  pages      = {431--474},
  volume     = {55},
  doi        = {10.1215/S0012-7094-87-05524-4},
  fjournal   = {Duke Mathematical Journal},
  mrclass    = {46L80 (19K33 46M20 58G12)},
  mrnumber   = {894590},
  mrreviewer = {Thierry\ Fack},
  url        = {https://doi.org/10.1215/S0012-7094-87-05524-4},
}

@Article{McClanahan1995,
  author     = {McClanahan, Kevin},
  journal    = {J. Funct. Anal.},
  title      = {{$K$}-theory for partial crossed products by discrete groups},
  year       = {1995},
  issn       = {0022-1236,1096-0783},
  number     = {1},
  pages      = {77--117},
  volume     = {130},
  doi        = {10.1006/jfan.1995.1064},
  fjournal   = {Journal of Functional Analysis},
  mrclass    = {46L80 (19K56 46L55)},
  mrnumber   = {1331978},
  mrreviewer = {Terry\ A.\ Loring},
  url        = {https://doi.org/10.1006/jfan.1995.1064},
}

@Article{Buss2026,
  author   = {Buss, Alcides and Mart\'inez, Diego},
  journal  = {J. Funct. Anal.},
  title    = {Essential groupoid amenability and nuclearity of groupoid {C}*-algebras},
  year     = {2026},
  issn     = {0022-1236,1096-0783},
  number   = {11},
  pages    = {Paper No. 111427},
  volume   = {290},
  doi      = {10.1016/j.jfa.2026.111427},
  fjournal = {Journal of Functional Analysis},
  mrclass  = {20L05 (46L05 46L52)},
  mrnumber = {5040091},
  url      = {https://doi.org/10.1016/j.jfa.2026.111427},
}

@Article{Higson2001,
  author     = {Higson, Nigel and Kasparov, Gennadi},
  journal    = {Invent. Math.},
  title      = {{$E$}-theory and {$KK$}-theory for groups which act properly and isometrically on {H}ilbert space},
  year       = {2001},
  issn       = {0020-9910,1432-1297},
  number     = {1},
  pages      = {23--74},
  volume     = {144},
  doi        = {10.1007/s002220000118},
  fjournal   = {Inventiones Mathematicae},
  mrclass    = {19K35 (19L47 46L80)},
  mrnumber   = {1821144},
  mrreviewer = {Emmanuel\ C.\ Germain},
  url        = {https://doi.org/10.1007/s002220000118},
}

@Article{Gabe2024,
  author     = {Gabe, James and Szab\'o, G\'abor},
  journal    = {Acta Math.},
  title      = {The dynamical {K}irchberg-{P}hillips theorem},
  year       = {2024},
  issn       = {0001-5962,1871-2509},
  number     = {1},
  pages      = {1--77},
  volume     = {232},
  doi        = {10.4310/acta.2024.v232.n1.a1},
  fjournal   = {Acta Mathematica},
  mrclass    = {37A20 (19K35 37A55 46L55)},
  mrnumber   = {4747811},
  mrreviewer = {Yuhei\ Suzuki},
  url        = {https://doi.org/10.4310/acta.2024.v232.n1.a1},
}

@Article{Guentner2002,
  author    = {Guentner, Erik and Kaminker, Jerome},
  journal   = {Topology},
  title     = {Exactness and the {N}ovikov conjecture},
  year      = {2002},
  number    = {2},
  pages     = {411--418},
  volume    = {41},
  doi       = {10.1016/S0040-9383(00)00036-7},
  publisher = {Elsevier},
}

@Article{Connes1981,
  author   = {Connes, Alain and Feldman, Jacob and Weiss, Benjamin},
  journal  = {Ergodic Theory Dynam. Systems},
  title    = {An amenable equivalence relation is generated by a single transformation},
  year     = {1981},
  issn     = {0143-3857,1469-4417},
  number   = {4},
  pages    = {431--450},
  volume   = {1},
  doi      = {10.1017/s014338570000136x},
  fjournal = {Ergodic Theory and Dynamical Systems},
  mrclass  = {46L55 (22D40)},
  mrnumber = {662736},
  url      = {https://doi.org/10.1017/s014338570000136x},
}

@Article{Anantharaman2023,
  author  = {Claire Anantharaman-Delaroche},
  journal = {Münster J. Math.},
  title   = {Amenability, exactness and weakcontainment property for groupoids},
  year    = {2023},
  url     = {https://www.uni-muenster.de/FB10/mjm/acc/mjm-Anantharaman.pdf},
}

@Article{Exelprivate,
  author = {Ruy Exel},
  title  = {Private communication},
}

@Article{Wu2025,
  author     = {Wu, Victor},
  journal    = {J. Funct. Anal.},
  title      = {{$C^*$}-algebras associated to directed graphs of groups, and models of {K}irchberg algebras},
  year       = {2025},
  issn       = {0022-1236,1096-0783},
  number     = {3},
  pages      = {Paper No. 110740, 39},
  volume     = {288},
  doi        = {10.1016/j.jfa.2024.110740},
  fjournal   = {Journal of Functional Analysis},
  mrclass    = {46L05 (46L35 46L55)},
  mrnumber   = {4823109},
  mrreviewer = {Miho\ Mukohara},
  url        = {https://doi.org/10.1016/j.jfa.2024.110740},
}

@Article{Brownlowe2025,
  author     = {Brownlowe, Nathan and Spielberg, Jack and Thomas, Anne and Wu, Victor},
  journal    = {Trans. Amer. Math. Soc.},
  title      = {Group actions on multitrees and the {$K$}-theory of their crossed products},
  year       = {2025},
  issn       = {0002-9947,1088-6850},
  number     = {3},
  pages      = {1697--1732},
  volume     = {378},
  doi        = {10.1090/tran/9304},
  fjournal   = {Transactions of the American Mathematical Society},
  mrclass    = {46L80 (20E08)},
  mrnumber   = {4866348},
  mrreviewer = {Angel\ Rom\'an},
  url        = {https://doi.org/10.1090/tran/9304},
}

@Article{Ma2022,
  author   = {Ma, Xin},
  journal  = {Int. Math. Res. Not. IMRN},
  title    = {Purely infinite locally compact {H}ausdorff \'etale groupoids and their {$C^*$}-algebras},
  year     = {2022},
  issn     = {1073-7928,1687-0247},
  number   = {11},
  pages    = {8420--8471},
  doi      = {10.1093/imrn/rnaa360},
  fjournal = {International Mathematics Research Notices. IMRN},
  mrclass  = {46L05 (46L35)},
  mrnumber = {4425841},
  url      = {https://doi.org/10.1093/imrn/rnaa360},
}

@Article{Brown2014,
  author     = {Brown, Jonathan and Clark, Lisa Orloff and Farthing, Cynthia and Sims, Aidan},
  journal    = {Semigroup Forum},
  title      = {Simplicity of algebras associated to \'etale groupoids},
  year       = {2014},
  issn       = {0037-1912,1432-2137},
  number     = {2},
  pages      = {433--452},
  volume     = {88},
  doi        = {10.1007/s00233-013-9546-z},
  fjournal   = {Semigroup Forum},
  mrclass    = {46L05 (22A22)},
  mrnumber   = {3189105},
  mrreviewer = {Jean\ N.\ Renault},
  url        = {https://doi.org/10.1007/s00233-013-9546-z},
}

@Article{Kasparov1988,
  author     = {Kasparov, Gennadi. G.},
  journal    = {Invent. Math.},
  title      = {Equivariant {$KK$}-theory and the {N}ovikov conjecture},
  year       = {1988},
  issn       = {0020-9910,1432-1297},
  number     = {1},
  pages      = {147--201},
  volume     = {91},
  doi        = {10.1007/BF01404917},
  fjournal   = {Inventiones Mathematicae},
  mrclass    = {58G12 (19K33 19K56 46L80 46M20 53C20 57R67)},
  mrnumber   = {918241},
  mrreviewer = {Jonathan\ M.\ Rosenberg},
  url        = {https://doi.org/10.1007/BF01404917},
}

@Book{Roerdam2002,
  author    = {R{\o}rdam, Mikael and St{\o}rmer, Erling},
  publisher = {Springer-Verlag, Berlin},
  title     = {Classification of nuclear {$C^*$}-algebras. {E}ntropy in operator algebras},
  year      = {2002},
  isbn      = {3-540-42305-X},
  note      = {Operator Algebras and Non-commutative Geometry, 7},
  series    = {Encyclopaedia of Mathematical Sciences},
  volume    = {126},
  doi       = {10.1007/978-3-662-04825-2},
  mrclass   = {46Lxx (46-06)},
  mrnumber  = {1878881},
  pages     = {x+198},
  url       = {https://doi.org/10.1007/978-3-662-04825-2},
}

@Article{Ma2026,
  author        = {Xin Ma and Daxun Wang and Wenyuan Yang},
  title         = {Boundary actions of Bass-Serre Trees and the applications to $C^*$-algebras},
  year          = {2026},
  archiveprefix = {arXiv},
  eprint        = {2502.02039},
  primaryclass  = {math.OA},
}

@Article{Kerr2020,
  author     = {Kerr, David},
  journal    = {J. Eur. Math. Soc. (JEMS)},
  title      = {Dimension, comparison, and almost finiteness},
  year       = {2020},
  issn       = {1435-9855,1435-9863},
  number     = {11},
  pages      = {3697--3745},
  volume     = {22},
  doi        = {10.4171/jems/995},
  fjournal   = {Journal of the European Mathematical Society (JEMS)},
  mrclass    = {37A55 (37A20 46L35 46L55)},
  mrnumber   = {4167017},
  mrreviewer = {Bruno\ Brogni\ Uggioni},
  url        = {https://doi.org/10.4171/jems/995},
}

@Article{Brownlowe2025a,
  author   = {Brownlowe, Nathan and Kumjian, Alex and Pask, David and Sims, Aidan},
  journal  = {J. Aust. Math. Soc.},
  title    = {Embeddability of higher-rank graphs in groupoids and the structure of their {$C^*$}-algebras},
  year     = {2025},
  issn     = {1446-7887,1446-8107},
  number   = {3},
  pages    = {314--346},
  volume   = {119},
  doi      = {10.1017/S1446788725101109},
  fjournal = {Journal of the Australian Mathematical Society},
  mrclass  = {46L05 (18D99)},
  mrnumber = {5002159},
}
\bibliographystyle{alphaurl}

\end{document}